\documentclass[11pt]{article}

\usepackage{amssymb}
\usepackage{amsmath}
\usepackage{amsthm}
\usepackage[colorlinks=true,linkcolor=blue,citecolor=blue,urlcolor=blue]{hyperref}
\usepackage{cleveref}
\usepackage{amsfonts}
\usepackage{graphicx}
\usepackage{placeins}
\usepackage{comment}
\usepackage{xfrac}
\usepackage{esint}
\usepackage{enumerate}
\usepackage{tikz}
\usepackage{parskip}
\usepackage{caption}
\usepackage{subcaption}
\usepackage{bbm}
\usepackage{mathtools}
\usepackage[margin=1in]{geometry} 
\numberwithin{equation}{section}

\newcommand{\e}{\varepsilon}
\newcommand{\R}{\mathbb R}

\newcommand{\x}{{\tilde{x}}}
\newcommand{\E}{{\tilde{E}}}
\newcommand{\W}{{\tilde{W}}}
\newcommand{\p}{{\mathbf{p}}}

\newcommand{\beq}{\begin{equation}}
\newcommand{\eeq}{\end{equation}}

\newcommand{\n}{{\bf n}}

\newcommand\restr[2]{{
  \left.\kern-\nulldelimiterspace 
  #1 
  \right|_{#2} 
  }}

\newtheorem{theorem}{Theorem}[section]

\newtheorem{proposition}[theorem]{Proposition}
\newtheorem{lemma}[theorem]{Lemma}
\theoremstyle{definition}
\newtheorem{definition}{Definition}[section]
\newtheorem{remark}[theorem]{Remark}
\numberwithin{figure}{section}
\allowdisplaybreaks

\begin{document}

\title{\bf Polar director structure of $\text{SmAP}_\mathrm{F}$ phase of bent-core liquid crystals in thin planar cells with bias electric field}
\author{Alec D. Wendland\thanks{%
Department of Mathematics, The University of Connecticut, Storrs, CT, USA, alec.wendland@uconn.edu}
\and Xiaodong Yan\thanks{%
Department of Mathematics, The University of Connecticut, Storrs, CT, USA, xiaodong.yan@uconn.edu } }
\maketitle

\begin{abstract}
We study the polar director structure in thin planar cells filled with bent-core liquid crystals in the ferroelectric smectic-A phase ($\text{SmAP}_\mathrm{F}$).  We analyze a continuum phenomenological model proposed by Gornik {\it{et al.}} {\color{black} in \cite{GCV14}} {\color{black} and} {\color{black} present} rigorous proof{\color{black} s} of {\color{black} the} existence and uniqueness of the equilibrium solutions{\color{black} . We further investigate the} qualitative properties of nontrivial solutions {\color{black} and examine the} effects of {\color{black} a} {\color{black} bias electric field, surface anchoring, and cell thickness on the polar director configuration}. Our results {\color{black} are consistent} with  previous experimental and numerical simulations {\color{black} reported} in the physics literature. {\color{black} {\color{black} In addition}, our {\color{black} analysis reveals new parameter-dependent behaviors supported by our numerical simulations and extends results reported from previous literature}. }
\end{abstract}







\section{Introduction}

\label{sec:intro}
Ferroelectric liquid crystals can largely enhance the response to {\color{black} an applied} electric field and are of {\color{black} interest in} technological {\color{black} settings}. Meyer {\it{et al}.} \cite{Meyetal75} first reported liquid crystals with ferroelectric behavior, where they showed that molecular chirality allows for long range polar order{\color{black} ing} in the tilted smectic-C (SmC) phases.  For a long time, much interest has been focused on liquid crystals formed by rod-like or disc-like molecules where polar structures have been observed in the chiral smectic-C (SmC*) phases.   The ferroelectric ordering in SmC* is considered improper since the ordering of molecular dipoles is induced by a nonpolar order of chiral molecules instead of electrostatic dipole-dipole interactions. On the other hand, the discovery of polarization switching in bent-core liquid crystals \cite{Nioetal96} provided a new route to achieve the macroscopic polar order{\color{black} ing} in the so-called proper ferroelectric materials. The efficient packing of bent-core molecules gives rise to a polar order along the kink direction of the molecules, thus spontaneous polarization in the non tilted {\color{black} s}mectic-A {\color{black}(SmA)} phase is possible. 

The first synthesis of bent-core liquid crystals was reported by Vorl\"ander \cite{Vor29,VA32}. It ha{\color{black} d} not caught a lot of interest until the synthetic work by Matsunaga {\it{et al.}} \cite{KMM1991,MasM1991,MM1993} in the early 1990s when one of the molecules, 1,3-phenylene bis [4-(4-
n-octyloxyphenyliminomethyl) benzoate] opened a new era in liquid crystal science. The physical importance of bent-core molecules, however, was not realized until  Nori {\it{et al}.} \cite{Nioetal96}  discovered the spontaneous polarization of bent-core molecules  in 1996. The interlayer structure discovered in \cite{Nioetal96} is anti-ferroelectric. Since then, a variety of mesophases formed by bent-core mesogens have been discovered and their dielectric and electro-optic properties have drawn a lot of interest due to potential applications \cite{Sek97,Lin97,Pel99,Wal00,RT06,TT06,WRO11}.  While Brand {\it{et al}} \cite{BCP92, BCP98, CPB99} predicted the possible emergence of polar phases of achiral bent-core liquid crystals theoretically, it was only until 2011, the ferroelectric ordering in SmA phase ($\text{SmAP}_\mathrm{F}$) was first experimentally observed by Reddy {\it{et al}} \cite{Red11} (see also \cite{Guoetal11} and \cite{Zhu12} for further evidence of ferroelectric behavior of bent-core molecules). To learn more on physical properties in various phases  formed by bent-core molecules, we refer the readers to \cite{TT06,JLOS18}. 

In contrast to the extensive literature on experiments and numerical simulations by physicists, much less work on  rigorous analysis of bent-core liquid crystals can be found in the  literature. In the smectic-C setting, structure and stability of bent core liquid crystal fibers  were addressed  in \cite{Baietal07, BP12}. A  variational model  for bent-core columnar phases was discussed in \cite{GY15} where existence of minimizers and Gamma-convergence results were obtained. For ferroelectric smectic-A phase of bent-core liquid crystals, several one dimensional continuum models have been proposed by physicists to explain the experimental observations.   Guo {\it{et al}} \cite{Guoetal11} introduced a continuum model where the equilibrium polar director field was obtained by minimization of {\color{black} a} free energy. The bulk energy density {\color{black}in \cite{Guoetal11}} is a sum of elastic and electrostatic contributions: 
$$
f=\underbrace{K_{p1}(\overrightarrow{\nabla}\cdot \p)+\frac{1}{2}K_{p2}(\overrightarrow{\nabla}\cdot \p)^2}_{f_{elastic}}+\underbrace{\frac{P_0^2p_x^2}{2\e\e_0}-E_BP_0p_x}_{f_{{\color{black}electrostatic}}}
$$ and  surface energy is of the form $f_S=W_S(p_x\pm1)^2\bigg|_{x={\color{black}\pm\frac{1}{2}L}}$. Here  $P_0$ is the magnitude of polarization,  $E_B$ is the external DC bias electric field, $\e$ is the
static dielectric constant, $\e_0$ is the permissitivity of the free space and $K_{p1},K_{p2}$ are elastic constants related to the splay of polarization. {\color{black} The} unit vector $\p=(p_x(x),p_y(x),0)$ is the polar order parameter. The elastic contribution in their model contains only the splay of polarization and the surface anchoring is non-polar quadratic. 
A similar continuum model using the Oseen-Frank expression for the elastic energy {\color{black} was}  proposed by Gornik, \v Cepi\v c and Vaupoti\v{c} (G\v CV) \cite{GCV14}. {\color{black} Their  elastic energy density contains both the splay term and bend term:
$$
f_{elastic}=\frac{1}{2}K_{pS}(\overrightarrow{\nabla }\cdot \p)^2+\frac{1}{2}K_{pB}({\color{black}\p\times}(\overrightarrow{\nabla}\times \p))^2,
$$
where $K_{pS}$ is the polarization splay elastic constant and $K_{pB}$ is the polarization bend elastic constant, and their surface energy considers only the surface polarization:
$$
f_S=-W_Sp_x\bigg|_{x=0}+W_Sp_x\bigg|_{x=L},
$$ where $W_S$ is the strength of the polar surface anchoring.} 
In her PhD thesis, Leskovar \cite{Les16} studied  the structure and dielectric response of  G\v CV's model with an additional linear splay term 
$
 K_l\frac{\partial p_x}{\partial x}
$
in  the elastic energy density $f_{elastic}$  and an additional  non-polar anchoring term $W_N(\p\cdot\nu)^2$ in the surface density{\color{black} , where} $\nu$ is the interior normal unit vector on the surface. In addition, {\color{black} the} effect of different types of surface anchoring on the structure of polar director was disscussed in \cite{Les16}. 
 A molecular model for 
 nonchiral bent-core molecules was developed by  Osipov and Pajak in \cite{OP14} where they considered
only the dispersion and electrostatic dipolar interactions
between central parts of bent-core molecules. In the two dimensional case $\Omega_S=(0,S)^2$, $\Gamma_{SH}=(0,S)\times \{0,S\}$, Garc\'ia-Cervera, Giorgi and Joo considered free energy of the form
$$
E(\p)=\int_{\Omega_S}\left(K|\nabla \p|^2-P_0E_Bp_y\right)d\x+W\int_{\Gamma_{SH}}(\p\cdot\nu)^2
$$ 
and studied the boundary vortex formation in \cite{GGJ20}. Their model corresponds to the Oseen-Frank energy when the elastic constant {\color{black} is equal} to the ben{\color{black} d} co{\color{black} n}stant.  In a separate work, the three authors  \cite{GGJ23} derived a reduced two dimensional model from a three dimensional model via Gamma-{\color{black} convergence}.

The results on the static polar structure  in \cite{GCV14,Les16} are mainly based on numerical simulations and rigorous understanding of the equlibrium polar structure, to the best of our knowledge,  is not known. 
In this paper, we revisit the  continuum phenomenological model for $\text{SmAP}_\mathrm{F}$  phase in thin planar cells proposed in \cite{GCV14}.  We present rigorous proof{\color{black} s} of existence and uniqueness of {\color{black} the} static polar director profile and discuss qualitative properties of equilibrium solutions. The effects of bias electric field, cell thickness and strengths of the surface anchoring on the polar director structure are also discussed. Our results {\color{black} closely} match the experimental and numerical simulations in \cite{GCV14,Les16}. {\color{black}In addition, our analysis and supporting numerical simulations reveal new parameter-dependent behavior, extending the results previously found in \cite{GCV14}.}

The paper is organized as follows. Section 2 presents some preliminary {\color{black}  definitions, introduces the free energy model, and states the main results}. Rigorous proofs {\color{black} of results are} presented in section 3, and section 4 is devoted to {\color{black} our} numeri{\color{black} c}al simulation{\color{black} s} {\color{black} and discussion}.

\section{Preliminaries} 

\label{sec:prelim}
 Following \cite{GCV14}, we consider the ferroelectric SmA phase ($\text{SmAP}_\mathrm{F}$) made of bent{\color{black} -}core molecules in  thin planar cells of thickness $L$ (see Figure 1, {\color{black}reproducing Figure 1 from \cite{GCV14}}). Label the average direction of the long molecular ax{\color{black} i}s with the director $\n$ and the direction of the short ax{\color{black} i}s with the polar director $\p$. We assume that the smectic order parameter and the director $\n$ are constants, and only the polar director $\p$ varies across the cell. We also assume that the spatial variation of the polar director is the same across all smectic layers.

The free energy of the liquid crystal inside the cell (\cite{GCV14}) is the sum of the bulk and surface contribution

\begin{equation*}\label{eqn:energy}
F=\int fdV+\int f_s dS,
\end{equation*}
where the bulk free energy density $f$ is of the form
\begin{eqnarray}\label{bulkdensity}
f=\frac{1}{2}K_{pS}{(\overrightarrow{\nabla }\cdot \p)}^2+\frac{1}{2}K_{pB}(\p\times(\overrightarrow{\nabla}\times \p))^2+\frac{P_0^2p_x^2}{2\e\e_0}-E_BP_0p_x,
\end{eqnarray}
where $\p=(p_x,p_y,0)$ is a unit vector {\color{black} representing polarization, $\n~{\color{black}=(0,0,1)}$ is the unit vector representing the average direction of the long axis of the bent-core molecule,} $P_0$ is the magnitude of the polarization, $\e$ is the static dielectric constant, $\e_0$ is the permittivity of free space ($\e_0=8.85\times 10^{-12} \text{F}/\text{m})$, $E_B$ is the external DC bias electric field, {\color{black} and} $K_{pS}$ and $K_{pB}$ are the polarization splay elastic constant and the polarizaton bend elastic constant respectively. In the bent-core liquid crystals, the bend elastic constant $K_{pB}$ is much lower than the splay elastic constant $K_{pS}$.  The third term in \eqref{bulkdensity} is the self-electrostatic energy and the last term presents coupling of polarization with the external DC bias electric {\color{black}field} applied along the $x-$direction.

\begin{figure}
\centering
 \begin{tikzpicture}[cross/.style={path picture={ \draw[black] (path picture bounding box.south east)-(path picture bounding box.north west) (path picture bounding box.south west)-(path picture bounding box.north east); }}]
  \draw[gray,fill] (0,3.82) rectangle (6,4);
 \draw[thick,<->] (0,0.23)--(0,3.77);
\draw (-0.2,2) node {$L$};
\draw [thick] (2,0.23)--(2,3.77);
\draw [thick] (4,0.23)--(4,3.77);
\draw [thick] (6,0.23)--(6,3.77);

\draw[thick,->] (0,3.77)--(2,3.77);
\draw[red,fill] (0,3.77)--(1,3.65)--(1,3.45)--cycle;
\draw[red,fill] (2,3.77)--(1,3.65)--(1,3.45)--cycle;
\draw[red,fill] (4,3.77)--(3,3.65)--(3,3.45)--cycle;
\draw[red,fill] (2,3.77)--(3,3.65)--(3,3.45)--cycle;
\draw[red,fill] (4,3.77)--(5,3.65)--(5,3.45)--cycle;
\draw[red,fill] (6,3.77)--(5,3.65)--(5,3.45)--cycle;
\draw[thick,->] (1,3.77)--(1,3.3);
\draw[thick,->] (3,3.77)--(3,3.3);
\draw[thick,->] (5,3.77)--(5,3.3);
 \draw[thick,->] (2,3.77)--(4,3.77);
 \draw[thick,->] (4,3.77)--(6,3.77);
\draw (5.2,3.2) node {$\p$};
\draw (6,3.6) node {$\n$};

\draw[red,fill] (0,2)--(1,2.12)--(1,1.88)--cycle;
\draw[red,fill] (2,2)--(1,2.12)--(1,1.88)--cycle;
\filldraw[fill=white] (1,2) circle (3pt);
\draw (1,2) node {x};
\draw[red,fill] (2,2)--(3,2.12)--(3,1.88)--cycle;
\draw[red,fill] (4,2)--(3,2.12)--(3,1.88)--cycle;
\filldraw[fill=white] (3,2) circle (3pt);
\draw (3,2) node {x};
\draw[red,fill] (4,2)--(5,2.12)--(5,1.88)--cycle;
\draw[red,fill] (6,2)--(5,2.12)--(5,1.88)--cycle;
\filldraw[fill=white] (5,2) circle (3pt);
\draw (5,2) node {x};

\draw[red,fill] (0,0.23)--(1,0.35)--(1,0.55)--cycle;
\draw[red,fill] (2,0.23)--(1,0.35)--(1,0.55)--cycle;
\draw[red,fill] (4,0.23)--(3,0.35)--(3,0.55)--cycle;
\draw[red,fill] (2,0.23)--(3,0.35)--(3,0.55)--cycle;
\draw[red,fill] (4,0.23)--(5,0.35)--(5,0.55)--cycle;
\draw[red,fill] (6,0.23)--(5,0.35)--(5,0.55)--cycle;

 \draw[thick,->] (2.5,2.4) -- (2.9,2.4);
\draw (2.97,2.4) node {$z$};
 \draw[thick,->] (2.5,2.4) -- (2.5,2.8);
 \draw (2.5,2.87) node {$x$};

 \draw[thick,->] (1,0.23)--(1,0.7);
\draw[thick,->] (3,0.23)--(3,0.7);
\draw[thick,->] (5,0.23)--(5,0.7); 
 \draw[thick,->] (2,0.23)--(4,0.23);
\draw[thick,->] (0,0.23)--(2,0.23);
\draw[thick,->] (4,0.23)--(6,0.23);
  \draw[gray,fill] (0,0.18) rectangle (6,0);
\draw (3, -0.5) node {(a)};

\draw[gray,fill] (9,3.82) rectangle (12,4);
 \draw[thick,<->] (9,0.23)--(9,3.77);
\draw  (8.9, 2) node {$L$};
 \draw[thick] (12,0.23)--(12,3.77);
\draw (10.2,3.3) node {$\p$};
\draw[thick,->](10,3.77)--(10,3.2);
\draw[thick,->](10,2.7)--(10.6,2.4);
\draw[thick,->] (10,2)--(10.6,2);
\draw[thick,->] (10,1.3)--(10.6,1.6);
\draw[densely dashed] (10,1.3)--(10,1.8);
\draw (10.3, 1.45) arc (26:70:0.5);
\draw (10.4,1.7) node {$\theta$};
\draw[thick,->] (10,0.2)--(10,0.8);
\draw[thick,->] (11.1,2)--(11.8,2);
\draw (11.9,2) node {$y$};
\draw (11.2,2.7) node {$x $};
\draw[thick,->] (11.2,1.9)--(11.2,2.6); 
\draw[gray,fill] (9,0.18) rectangle (12,0);
\draw (10.5, -0.5) node {(b)};
\end{tikzpicture}
\caption{The cell geometry. $L$ is the cell thickness. $\n$ is the director pointing along the average direction
of the long molecular axes. $\p$ is the polar director pointing to the direction of short axes and local polarization. (a) $\n$ and $\p$ director profiles in the $x-z$ plane. (b) $\p$ director profiles in the $x-y$ plane.}\label{figure}
\end{figure}

We shall consider only the polar surface anchoring. {\color{black} In this way,} the surface energy density $f_S$ is expressed as
\begin{equation*}\label{eqn:surfacedensity}
f_S=-W_Sp_x\bigg|_{x=0}+W_Sp_x\bigg|_{x=L}
\end{equation*}
where $W_S$ is the strength of the polar surface anchoring. 

Introducing dimensionless parameters:
  \begin{equation}\label{eqn:dimpara}
    \tilde{x}=\frac{x}{L},  \hspace{0.1in} \tilde{E}_B=\frac{E_B\e\e_0}{P_0}, \hspace{0.1in} \kappa=\frac{K_{pB}}    {K_{pS}}, \hspace{0.1in}\tilde{W}_S=\frac{W_S\xi}{K_{pS}},
    \end{equation}
with correlation length 
  \begin{equation}\label{eqn:xi}
    \xi=\sqrt{\frac{K_{pS}\e\e_0}{P_0^2}}, 
  \end{equation}
we define the dimensionless free energy 
 \begin{equation}\label{eqn:tilF}
  \tilde{F}=\frac{L}{K_{pS}}F=\int_0^1\tilde{f} d\tilde{x}+\tilde{f}_S,
  \end{equation}
where the dimensionless bulk free energy density $\tilde{f}$ is 
 \begin{eqnarray}\label{eqn:bulkf}
  \tilde{f}=\frac{1}{2}\left(\frac{dp_x}{d\tilde{x}}\right)^2+\frac{1}{2}\kappa\left( \frac{dp_y}{d\tilde{x}}\right)^2+\frac{1}{2}\left(\frac{L}{\xi}\right)^2p_x^2-\left(\frac{L}{\xi}\right)^2\tilde{E}_Bp_x
\end{eqnarray}
and the dimensionless surface free energy density $\tilde{f}_S$ is 
\begin{equation}\label{eqn:surff}
\tilde{f}_S=-\tilde{W}_S\left(\frac{L}{\xi}\right)p_x\bigg|_{\tilde{x}=0}+\tilde{W}_S\left(\frac{L}{\xi}\right)p_x\bigg|_{\tilde{x}=1}.
\end{equation}
{\color{black} Unless indicated otherwise, for the remainder of this paper we assume that {\color{black} $\W_S>0$, $\E_B\ge 0$, $L/\xi\ge 1$, and $\kappa\in (0,1)$.}

Expressing $\p=(\cos \theta,\sin \theta,0)$ where  $\theta({\color{black}\tilde{x}})\in [0,\pi]$ is  the {\color{black}counter}clockwise angle between the ${\color{black}\tilde{x}}-$axis and the polar director $\p${\color{black},} minimization of the free energy over $\theta$ yields the Euler-Lagrange equation
\begin{eqnarray}\label{eqn:el}
(\sin^2\theta+\kappa \cos^2\theta)\theta''+\frac{1}{2}(1-\kappa)\sin2\theta(\theta')^2+\frac{1}{2}\left(\frac{L}{\xi}\right)^2\sin2\theta-\tilde{E}_B\left(\frac{L}{\xi}\right)^2\sin\theta=0
\end{eqnarray}
with boundary conditions 
\begin{equation}\label{eqn:bdry}
 -\tilde{W}_S\left(\frac{L}{\xi}\right)\sin\theta+(\sin^2\theta+\kappa \cos^2\theta)\theta'{\color{black} \bigg\rvert_{\tilde{x}=0,1}}=0.
\end{equation}

\begin{definition}
{\color{black} A function} $\theta\in W^{1,2}(0,1)$ is called a weak solution of \eqref{eqn:el} and \eqref{eqn:bdry} if 
\begin{equation*}
\begin{split}
\int_0^1\left\{-\left(\sin^2\theta)+\kappa \cos^2\theta\right)\theta'\varphi'-\frac{1-\kappa}{2}\sin 2\theta (\theta')^2\varphi+\left(\frac{L}{\xi}\right)^2\sin \theta(\cos \theta-\E_B)\varphi\right\}d\tilde{x}& ~+ \\
\W_S\frac{L}{\xi}\sin\theta\varphi\bigg|_0^1 &=0
\end{split}
\end{equation*} {\color{black} for all $\varphi\in W^{1,2}(0,1)$}.
\end{definition}

\begin{lemma}\label{lm:reg}
If $\theta$ is a weak solution of \eqref{eqn:el} and \eqref{eqn:bdry}, then $\theta \in C^{\infty}(0,1)$.
\end{lemma}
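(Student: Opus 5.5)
Since $\theta\in W^{1,2}(0,1)$, Sobolev embedding in one dimension makes $\theta$ continuous on $[0,1]$, hence bounded. The key object is the flux
$$a(\theta):=\sin^2\theta+\kappa\cos^2\theta\in[\kappa,1],$$
which is bounded below by $\kappa>0$ for every value of $\theta$. The equation is therefore uniformly elliptic, and no degeneracy appears even though $\theta$ is only known to be continuous. The plan is a one-dimensional bootstrap: first show the flux $a(\theta)\theta'$ is absolutely continuous, then divide by $a(\theta)$ and iterate.

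\textbf{Step 1.} Restrict the weak formulation to test functions $\varphi\in C_c^\infty(0,1)$, so the boundary term disappears. Writing
$$g:=-\tfrac{1-\kappa}{2}\sin2\theta\,(\theta')^2+\left(\tfrac{L}{\xi}\right)^2\sin\theta(\cos\theta-\E_B),$$
the identity says $\int_0^1 a(\theta)\theta'\varphi'\,d\x=\int_0^1 g\varphi\,d\x$. Since $\theta'\in L^2$, we have $g\in L^1(0,1)$. So the distributional derivative of $a(\theta)\theta'$ equals $-g\in L^1$. Hence $a(\theta)\theta'$ agrees a.e. with an absolutely continuous function, in particular a bounded continuous one.

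\textbf{Step 2.} Dividing by $a(\theta)\ge\kappa$, which is continuous, gives $\theta'\in L^\infty\cap C$ after modification on a null set. Thus $\theta\in C^1$, and then $g$ is continuous. It follows that $(a(\theta)\theta')'=-g$ is continuous, so $a(\theta)\theta'\in C^1$. Since $a(\theta)\in C^1$ as well, the quotient $\theta'=a(\theta)\theta'/a(\theta)$ is $C^1$, i.e.\ $\theta\in C^2$. Expanding the derivative then shows $\theta$ satisfies \eqref{eqn:el} classically, in the form
$$\theta''=\frac{-\tfrac{1-\kappa}{2}\sin2\theta(\theta')^2-\tfrac12\left(\tfrac{L}{\xi}\right)^2\sin2\theta+\E_B\left(\tfrac{L}{\xi}\right)^2\sin\theta}{a(\theta)}.$$
Here one uses $a(\theta)'=(1-\kappa)\sin2\theta\,\theta'$. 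That term combines with the $-\tfrac{1-\kappa}{2}\sin 2\theta(\theta')^2$ term in $g$ to produce exactly the coefficient $+\tfrac12(1-\kappa)$ in \eqref{eqn:el}, so we should check this sign bookkeeping.

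\textbf{Step 3 (bootstrap).} The right-hand side above is a smooth function $G(\theta,\theta')$ of its arguments, with a denominator bounded below by $\kappa$. If $\theta\in C^k$ with $k\ge 2$, then $G(\theta,\theta')\in C^{k-1}$, so $\theta''\in C^{k-1}$ and $\theta\in C^{k+1}$. Induction gives $\theta\in C^\infty(0,1)$, and in fact up to the boundary.

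The only genuinely delicate point is Step 1. There we pass from a weak identity with a merely $L^1$ quadratic gradient term to continuity of the flux; this is where the one-dimensionality is essential. Once it is done, uniform ellipticity ($a\ge\kappa$) makes everything else routine.
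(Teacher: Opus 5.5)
Your proof is correct and follows essentially the same route as the paper. Both arguments write the equation in divergence form $((\sin^2\theta+\kappa\cos^2\theta)\theta')' = \frac{1-\kappa}{2}\sin 2\theta\,(\theta')^2 - (L/\xi)^2\sin\theta(\cos\theta-\E_B)$, use that the flux has an $L^1$ derivative to improve the right-hand side, and then bootstrap; the only difference is that you work in $C^k$ classes and divide directly by $a(\theta)\ge\kappa$, whereas the paper invokes Sobolev embedding and elliptic $W^{2,q}$ estimates, so your version is slightly more elementary and also yields smoothness up to the boundary.
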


\begin{proof}
{\color{black} The Euler-Lagrange equation} \eqref{eqn:el} can be rewritten as 
\begin{equation}\label{eqn:elw}
\left((\sin^2\theta+\kappa\cos^2\theta)\theta'\right)'=\frac{1-\kappa}{2}\sin 2\theta(\theta')^2-\left(\frac{L}{\xi}\right)^2\sin\theta(\cos\theta-\E_B).
\end{equation}
If $\theta$ is a weak solution of \eqref{eqn:el} and \eqref{eqn:bdry}, then 
$$\left(\sin^2\theta+\kappa \cos^2\theta\right)\theta'\in L^2$$
and 
$$\left(\left(\sin^2\theta+\kappa \cos^2\theta\right)\theta'\right)'=\frac{1-\kappa}{2}\sin 2\theta (\theta')^2-\left(\frac{L}{\xi}\right)^2\sin \theta(\cos \theta-\E_B)\in L^1.$$
{\color{black} The} Sobolev embedding  implies that $\left(\sin^2\theta+\kappa \cos^2\theta\right)\theta'\in L^p(0,1)$ for all $p>1$, which implies that the right hand side of \eqref{eqn:elw} is in $L^q$ for any $q>1$.  Apply{\color{black} ing} elliptic estimates to \eqref{eqn:elw}, we get that
$ \theta\in W^{2,q}(0,1)$ for all $q>1$. The conclusion then follows by a bootstrap argument. 
\end{proof}

Our main results are the following theorems. The first result is an existence {\color{black} and uniqueness} theorem {\color{black} for BVP \eqref{eqn:el}-\eqref{eqn:bdry}}.

\begin{theorem}\label{thm:existence}
 Given any  $\W_S, L, \xi, \E_B, {\color{black} \kappa}$ and constant $C$, there exists at most one solution of \eqref{eqn:el} and \eqref{eqn:bdry} {\color{black} which} satisfies the following equality:
\begin{equation}\label{eqn:idbdry}
\frac{\W_S^2\sin^2\theta(\x)}{\sin^2\theta(\x)+\kappa\cos^2\theta(\x)}-\left(\cos^2\theta(\x)-2\E_B\cos\theta(\x)\right)=C \text{ at } \x=0,1.
\end{equation}
More precisely, let{\color{black} ting} $g({\color{black}t})=\frac{\W^2_S(1-{\color{black}t}^2)}{1+(\kappa-1){\color{black}t}^2}-({\color{black}t}^2-2{\color{black}t}\E_B)$, we have 
 \begin{enumerate}
  \item If $C$ satisfies one of the following: 
             \begin{itemize}
               \item[a.] $C<-2\E_B-1$; 
               \item[b.]$-2\E_B-1<C<2\E_B-1$; or 
               \item[c.] $C\geq\max_{|{\color{black}t}|\leq 1}g({\color{black}t})$,
            \end{itemize} then there is no solution of \eqref{eqn:el}-\eqref{eqn:bdry} satisfying \eqref{eqn:idbdry}.
 \item If $C=-2\E_B-1$, the only solution of \eqref{eqn:el}-\eqref{eqn:bdry} satisfying \eqref{eqn:idbdry} is $\theta\equiv\pi$.
\item If $C=2\E_B-1$, the only solution of  \eqref{eqn:el}-\eqref{eqn:bdry} satisfying \eqref{eqn:idbdry}  is $\theta\equiv 0$.
\item  If $2\E_B-1<C<\max_{|{\color{black}t}|\leq 1}g({\color{black}t})$, \eqref{eqn:el}-\eqref{eqn:bdry} admits a unique nonconstant solution satisfying \eqref{eqn:idbdry}. 
 \end{enumerate}
\end{theorem}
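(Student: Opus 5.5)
The plan is to reduce \eqref{eqn:el} to a first-order equation by a conservation law, and then to read off the whole statement from the phase portrait in the $(\theta,\theta')$ plane. Write $a(\theta)=\sin^2\theta+\kappa\cos^2\theta$ and $\lambda=L/\xi$. Since $a'(\theta)=(1-\kappa)\sin2\theta$, equation \eqref{eqn:el} reads $a\theta''+\frac12 a'(\theta)(\theta')^2+\lambda^2\sin\theta(\cos\theta-\E_B)=0$. By Lemma \ref{lm:reg}, solutions are smooth, so we may multiply by $\theta'$. Using $\sin\theta\cos\theta\,\theta'=(\frac12\sin^2\theta)'$ and $-\sin\theta\,\theta'=(\cos\theta)'$, this gives the first integral
\begin{equation*}
\frac{a(\theta)(\theta')^2}{\lambda^2}-\left(\cos^2\theta-2\E_B\cos\theta\right)\equiv \text{const on }[0,1].
\end{equation*}
At $\x=0,1$ the boundary condition \eqref{eqn:bdry} gives $a\theta'=\W_S\lambda\sin\theta$, so $a(\theta')^2/\lambda^2=\W_S^2\sin^2\theta/a$. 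Hence \eqref{eqn:idbdry} says exactly that the constant of motion equals $C$. Moreover, $\theta(0)$ and $\theta(1)$ must satisfy $g(\cos\theta)=C$.

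With $t=\cos\theta$ and $h(t)=t^2-2\E_B t$, every solution lies on the level curve $(\theta')^2=\lambda^2\,(C+h(\cos\theta))/a(\theta)$. Items 1(a), 2 and 3 then follow from elementary facts about $h$ on $[-1,1]$.
\begin{itemize}
\item[(i)] Since $\max_{[-1,1]}h=h(-1)=1+2\E_B$, for $C<-1-2\E_B$ the right-hand side is negative everywhere, so no solution exists.
\item[(ii)] For $C=-1-2\E_B$ the level set reduces to the equilibrium $\theta=\pi$, where $\theta'=0$. By uniqueness for the ODE (note $a\ge\kappa>0$, so the equation is nondegenerate) this forces $\theta\equiv\pi$.
\item[(iii)] For $C=2\E_B-1=-h(1)$, I would show that the only orbit compatible with the boundary data is the equilibrium $\theta\equiv0$. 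Any other branch of this level set would have to satisfy $g(\cos\theta)=C$ at the endpoints, and I expect $g(t)>C$ there for $t\neq1$.
\end{itemize}

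For the remaining items the key observation is about the sign of $\theta'$. The boundary condition gives $\theta'=\W_S\lambda\sin\theta/a>0$ at both ends whenever $\theta(0),\theta(1)\in(0,\pi)$. Along an orbit, $\theta'$ can change sign only at a turning point, where $C+h(\cos\theta)=0$. A nonconstant solution staying in $[0,\pi]$ must therefore be strictly increasing; otherwise it would have to turn twice, which is impossible on a single arc of the level curve confined to $[0,\pi]$. So $\theta(0)<\theta(1)$ are two roots of $g(\cos\theta)=C$ and $\theta'=\lambda\sqrt{(C+h)/a}$ throughout.

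In case 1(b), $-1-2\E_B<C<2\E_B-1$, the allowed region $\{C+h\ge0\}$ is an interval $[\theta^*,\pi]$ around $\pi$ on which $C+h(-1)>0$. I would compare $g(t)-C$ with $C+h(t)$ there to show that $g(t)=C$ has at most one root in the allowed region, so an increasing solution cannot exist. In case 1(c), $C\ge\max g$, the equation $g=C$ has at most the degenerate tangential root, which is incompatible with $\theta(0)<\theta(1)$.

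In case 4, I would analyze $g(t)-C$ on $[-1,1]$. Using $g(1)=2\E_B-1<C<\max g$ and the sign of $g(-1)-C$, I would show that $g(t)=C$ has exactly two roots $t_1>t_0$ in the admissible region, which forces $\theta(0)=\arccos t_1$ and $\theta(1)=\arccos t_0$. Uniqueness then follows because the initial-value problem with these data and $\theta'(0)>0$ has a unique solution.

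I expect the main obstacle to be matching the cell length. The monotone solution must also satisfy
\begin{equation*}
\int_{\theta(0)}^{\theta(1)}\frac{\sqrt{a(\theta)}}{\lambda\sqrt{C+h(\cos\theta)}}\,d\theta=1 .
\end{equation*}
Once $C$ fixes the endpoints, this relation is an extra scalar condition, so existence in item 4 presumably has to be understood relative to this relation between $C$ and $\lambda$. For this step I would try to show that the time map is continuous and strictly monotone in $C$ on $(2\E_B-1,\max g)$, so that the relation determines $C$ and existence follows by the intermediate value theorem. A further delicate point is the root counting of $g(t)=C$, which depends on $\kappa$ and $\W_S$ through the rational term.
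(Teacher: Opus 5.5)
Your route is essentially the paper's: the first integral, the boundary values as roots of $g(t)=C$, monotonicity of nonconstant solutions, a case analysis in $C$, and uniqueness from the initial value problem at $\x=0$. Your treatment of 1(a), of 2, and of monotonicity via the phase plane is sound. For the monotonicity step, the fact to state is that, with your $h$, $\frac{d}{d\theta}\bigl[-h(\cos\theta)\bigr]=2\sin\theta(\cos\theta-\E_B)$ changes sign at most once on $(0,\pi)$, so no orbit has two turning points inside $[0,\pi]$.

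The genuine gap is the count of roots of $g(t)=C$ on $[-1,1]$. You leave it open, yet 1(b), 1(c), 3 and 4 all rest on it. The missing fact is that $g$ is strictly concave there:
\[
g''(t)=-2-\frac{2\kappa\W_S^2\bigl(1+3(1-\kappa)t^2\bigr)}{\bigl(1-(1-\kappa)t^2\bigr)^3}<0 \quad\text{on } [-1,1].
\]
Hence $g=C$ has at most two roots in $[-1,1]$. The paper argues equivalently that $(1+(\kappa-1)t^2)(g(t)-C)$ is a quartic with one root beyond each pole $\pm1/\sqrt{1-\kappa}$. Combined with $g(-1)=-1-2\E_B$ and $g(1)=2\E_B-1$, this gives:
\begin{itemize}
\item exactly one root in case 1(b);
\item at most the unique maximizer in case 1(c);
\item exactly two roots $t_2<t_3$ in $(-1,1)$ in item 4.
\end{itemize}
Your suggested comparison in 1(b) does not replace this. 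You would compare $\W_S^2\frac{1-t^2}{1-(1-\kappa)t^2}$ with $C+h(t)$, but both functions decrease on $[0,\cos\theta^*]$, and $\cos\theta^*>0$ whenever $0<C<2\E_B-1$, so a single crossing is not forced.

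In (iii), your expectation $g(t)>C$ for $t\neq 1$ is false. We have $g'(1)=2(\E_B-1-\W_S^2/\kappa)<0$ whenever $\E_B<1+\W_S^2/\kappa$, so $g>C$ just left of $t=1$, while $g(-1)\le C$. This gives a second root $t_2\in[-1,1)$. It lies in the allowed region, since $C+h(t_2)=\W_S^2\frac{1-t_2^2}{1-(1-\kappa)t_2^2}\ge 0$. The fix uses your monotonicity: the larger root in $[-1,1]$ is $1$, so a nonconstant increasing solution would need $\cos\theta(0)=1$. Then \eqref{eqn:bdry} gives $\theta'(0)=0$, and ODE uniqueness forces $\theta\equiv 0$.

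Your worry about the cell length is justified, but it does not block what is actually provable. For prescribed $C$, the length relation is an extra constraint, and it can fail. For example, as $C\uparrow\max g$ the roots $t_2,t_3$ merge and the traversal length tends to $0$. So item 4 is in substance a uniqueness statement, which is how the paper's Case V treats it: it identifies the endpoints and invokes IVP uniqueness. The paper gets existence separately, from the direct method applied to $I$, which yields a solution for some parameter-determined $C$. Your proposed strict monotonicity of the time map in $C$ is not needed for the theorem, and you have not established it.
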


\begin{remark}\label{rmk:trivsol}
	If there exists a point $x_0\in [0,1]$ such that $\theta(x_0)=0$, then $\cos\theta(x_0)=1$ and \eqref{eqn:idbdry} gives $C=2\E_B-1$. Consequently, part (3) of \Cref{thm:existence} implies that $\theta\equiv 0$, that is, $\cos\theta\equiv 1$ on $[0,1]$. Therefore $\theta\equiv 0$ if and only if $C=2\E_B-1$. Similarly $\theta\equiv \pi$ if and only if $C=-2\E_B-1$. In particular, every nontrivial solution of BVP \eqref{eqn:el}-\eqref{eqn:bdry} satisfies $\theta\in (0,\pi)$.
\end{remark}

{\color{black}
\begin{remark}
Part (4) of Theorem \ref{thm:existence} implies that if $\E_B\rightarrow\infty$, we must have $\W_S\rightarrow \infty$ in order to have a nontrivial solution of \eqref{eqn:el}-\eqref{eqn:bdry}. 
\end{remark}
}

The second theorem addresses the geometry and {\color{black} qualitative} properties of a nontrivial solution. We take $\theta$ to be a nonconstant solution of \eqref{eqn:el}-\eqref{eqn:bdry} and set $p_x(\x)=\cos\theta(\x)$ in the theorems below. 

\begin{theorem}\label{thm:property}
If $\theta$ {\color{black} is any} nonconstant solution of \eqref{eqn:el}-\eqref{eqn:bdry}{\color{black} ,} then
 \begin{enumerate}
  \item $\theta'(\x)>0$ for any $\x\in [0,1].$
  \item If $\E_B=0$, then $\cos \theta(\x)=-\cos\theta(1-\x)$ for any $\x \in [0,1]$. In particular, $\cos\theta(1/2)=0$ and $\cos\theta(\x)\neq 0$ for any $\x\neq \frac{1}{2}$. If $\theta_1(\x)$ and $\theta_2(\x)$ are two nonconstant solutions of \eqref{eqn:el}-\eqref{eqn:bdry} satisfying \eqref{eqn:idbdry} with $C_1>C_2$, then $\cos^2\theta_1(\x)>\cos^2\theta_2(\x)$ for $\x\in [0,1]\setminus\{\frac{1}{2}\}$.
  \item If $\E_B\geq \max\left\{1,\W_S^{2}\frac{\sqrt{\kappa+2+\sqrt{\kappa^2+8\kappa}}}{2\sqrt{2}}\right\}$, then $p_x''<0$ on $[0,1]$.
  \item If $p_x(0)\ge \E_B$ and $p_x(1)\le 0$, {\color{black} then} there exists a unique $x_0\in(0,1)$ such that $\theta''(x_0)=0${\color{black} . Moreover,} $\theta''({\color{black} \tilde{x}})>0$ for ${\color{black} \tilde{x}}\in (x_0,1)$ and $\theta''({\color{black} \tilde{x}})<0$ for ${\color{black} \tilde{x}}\in (0,x_0)$.
\end{enumerate}
\end{theorem}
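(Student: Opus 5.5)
The common tool for all four parts is a first integral of \eqref{eqn:el}. Write $\lambda=L/\xi$ and $a(\theta)=\sin^2\theta+\kappa\cos^2\theta$, so that $a'(\theta)=(1-\kappa)\sin2\theta$. Multiplying \eqref{eqn:el} by $\theta'$ gives
\[
\tfrac12\bigl(a(\theta)\theta'^2\bigr)'=-\lambda^2\sin\theta(\cos\theta-\E_B)\theta'=\lambda^2\Bigl(\tfrac12\cos^2\theta-\E_B\cos\theta\Bigr)' .
\]
Hence
\[
a(\theta)\theta'^2=\lambda^2\bigl(C+\cos^2\theta-2\E_B\cos\theta\bigr)\quad\text{on }[0,1].
\]
Combining this with the boundary condition \eqref{eqn:bdry}, $a\theta'=\W_S\lambda\sin\theta$, recovers \eqref{eqn:idbdry} with the same constant $C$. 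I use this identity throughout, together with $\theta\in(0,\pi)$ from Remark \ref{rmk:trivsol} and smoothness from Lemma \ref{lm:reg}.

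\textbf{Part 1 (monotonicity).} At both endpoints, \eqref{eqn:bdry} gives $\theta'=\W_S\lambda\sin\theta/a>0$. Suppose $\theta'$ vanishes somewhere in $(0,1)$.
\begin{itemize}
\item At such a point, \eqref{eqn:el} reduces to $\theta''=-\lambda^2\sin\theta(\cos\theta-\E_B)/a$.
\item If also $\theta''=0$ there, then $\cos\theta=\E_B$, which is an equilibrium of \eqref{eqn:el}. ODE uniqueness then forces $\theta$ to be constant, a contradiction.
\item So every zero of $\theta'$ is a strict local extremum of $\theta$. A local maximum value $\theta_+$ requires $\cos\theta_+\ge\E_B$, and a local minimum value $\theta_-$ requires $\cos\theta_-\le\E_B$.
\item Since $\theta'>0$ at both ends, a single interior zero is impossible. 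Two consecutive extrema must then occur: first a maximum $\theta_+$ followed by a minimum $\theta_-$, with $\theta_-<\theta_+$.
\item This gives $\E_B\ge\cos\theta_->\cos\theta_+\ge\E_B$, a contradiction.
\end{itemize}
Therefore $\theta'>0$ on $[0,1]$.

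\textbf{Part 2 ($\E_B=0$).} Set $\psi(\x)=\pi-\theta(1-\x)$.
\begin{itemize}
\item When $\E_B=0$, both \eqref{eqn:el} and \eqref{eqn:bdry} are invariant under this map: $\sin\psi$, $\psi'$ and $a(\psi)$ are unchanged, and $\sin2\psi$ changes sign exactly as $\psi''$ does.
\item The function $g$ is even when $\E_B=0$, so $\psi$ satisfies \eqref{eqn:idbdry} with the same $C$.
\item Uniqueness in Theorem \ref{thm:existence} gives $\psi=\theta$, i.e. $\cos\theta(\x)=-\cos\theta(1-\x)$.
\item By Part 1, $\cos\theta$ vanishes only at $\x=\tfrac12$.
\end{itemize}
For the comparison, on $[0,\tfrac12]$ invert the monotone map using the first integral:
\[
\tfrac12-\x=\frac1\lambda\int_{\theta_i(\x)}^{\pi/2}\sqrt{\frac{a(\phi)}{C_i+\cos^2\phi}}\,d\phi .
\]
The integrand decreases strictly in $C$. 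For the same left-hand side, a larger $C_1$ therefore forces $\theta_1(\x)$ farther from $\pi/2$, so $\cos^2\theta_1>\cos^2\theta_2$. The interval $[\tfrac12,1]$ then follows by the symmetry just proved.

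\textbf{Parts 3 and 4 (convexity).} Both reduce to signs of rational functions of $t=\cos\theta$. Using \eqref{eqn:el} and the first integral,
\[
\theta''=-\frac{\sin\theta}{a}\Bigl[\lambda^2(\cos\theta-\E_B)+(1-\kappa)\cos\theta\,\theta'^2\Bigr],
\]
so
\[
\operatorname{sign}\theta''=-\operatorname{sign}k(t),\qquad k(t)=(t-\E_B)\bigl(1+(\kappa-1)t^2\bigr)+(1-\kappa)t\,(C+t^2-2\E_Bt).
\]

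For Part 4, $t$ decreases from $p_x(0)\ge\E_B$ to $p_x(1)\le0$.
\begin{itemize}
\item At $t=p_x(0)$ the first term of $k$ is $\ge0$. The second term has the sign of $t$ times a positive quantity, since $C+t^2-2\E_Bt=a\theta'^2/\lambda^2>0$. So $k>0$ and $\theta''<0$ near $0$.
\item At $t=p_x(1)\le0$ both terms are $\le0$ and not both zero, so $k<0$ and $\theta''>0$ near $1$.
\end{itemize}
I will then show that $k$ has exactly one zero on $[p_x(1),p_x(0)]$. Since $t(\x)$ is strictly monotone, this yields a unique $x_0$. I would try to establish the single zero by checking $k'(t)>0$ on $t\in[-1,1]$ where $C+t^2-2\E_Bt>0$: the leading part is $1-\kappa\,\cdot$(positive) and uses $\kappa<1$.

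For Part 3, write
\[
p_x''=-\cos\theta\,\theta'^2-\sin\theta\,\theta''
=-\frac{\lambda^2}{a}\Bigl[t\,(C+t^2-2\E_Bt)-(1-t^2)\frac{k(t)}{a}\Bigr]
\]
with $a=1+(\kappa-1)t^2$. The task is to show the bracket is positive for all admissible $t$ when $\E_B\ge\max\{1,\W_S^2c_\kappa\}$, where $c_\kappa=\sqrt{\kappa+2+\sqrt{\kappa^2+8\kappa}}/(2\sqrt2)$. The natural input is $C<\max g$, since $g\le\W_S^2\max_t\frac{1-t^2}{a}+(\text{terms in }\E_B)$. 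The constant $c_\kappa$ looks like the maximum of a quantity such as $\frac{\sqrt{1-t^2}}{a}$-type expressions. This is the step I expect to be the main obstacle: it needs a careful optimisation in $t$ to produce exactly that threshold, and I would first try bounding $C\le\W_S^2\max(1-t^2)/a+2\E_B-1$ and reducing to a one-variable polynomial inequality.
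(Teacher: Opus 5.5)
Parts 1, 2 and 4 are sound.

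\begin{itemize}
\item Your max-then-min argument for $\theta'>0$ is valid and a bit cleaner than the paper's continuation argument. You should, however, get $\theta\in(0,\pi)$ directly from Picard--Lindel\"of as the paper does, since \Cref{rmk:trivsol} itself rests on Part 1.
\item In Part 2, using the uniqueness in \Cref{thm:existence} together with the explicit inversion formula to compare $\cos^2\theta_1$ and $\cos^2\theta_2$ is valid, and cleaner than the paper's crossing argument.
\item In Part 4, the derivative you leave unchecked is simply $k'(t)=1+(\kappa-1)t^2+(1-\kappa)(C+t^2-2\E_B t)$, which is positive along the solution; this is the paper's $v'$.
\end{itemize}

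The genuine gap is Part 3. It is not proved, and the route you sketch cannot work. With $Q=C+t^2-2\E_B t$, your bracket equals $\kappa tQ/a-(1-t^2)(t-\E_B)$, which tends to $-Q<0$ as $t\to-1$ whatever $C$ is. So an upper bound on $C$ from $C<\max g$, followed by a one-variable optimization over admissible $t$, cannot succeed. Indeed, small $C$ corresponds exactly to $p_x(1)$ near $-1$, because $g$ is increasing on $[-1,0]$. What is needed is (a) a bound on $a\theta'^2$ that degenerates like $\sin^2\theta$ near $\theta=\pi$, and (b) quantitative control of how negative $p_x$ can be where $p_x''$ might vanish.

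The paper obtains both by contradiction at the first point $x_1$ with $p_x''(x_1)=0$. Since $\E_B\ge1$, $p_x''<0$ wherever $\cos\theta\ge0$. Hence $z=-\cos\theta(x_1)\in(0,1)$, and $p_x''(x_1)=0$ gives $\kappa\theta'(x_1)^2=\lambda^2\sin^2\theta(x_1)(1+\E_B/z)$.

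\begin{itemize}
\item For (b): comparing \eqref{eqn:idbdry} at $0$ and $1$ with $\E_B\ge1$ gives $\sin\theta(0)<\sin\theta(1)$, hence $p_x(0)>-p_x(1)>z$. Inserting this into the first integral evaluated at $x_1$ and at $0$ gives $(1+(\kappa-1)z^2)(1-z^2)>2\kappa z^2$. That is, $z<z_1$, where $z_1^2$ is the smaller root of $(1+(\kappa-1)y)(1-y)=2\kappa y$.
\item For (a): $\bigl(a\theta'^2\bigr)'=2\lambda^2\sin\theta\,(\E_B-\cos\theta)\,\theta'\ge0$. So $a\theta'^2$ at $x_1$ is at most its boundary value $\lambda^2\W_S^2\sin^2\theta(1)/a(\theta(1))$.
\end{itemize}

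Because $(1+(\kappa-1)z^2)(1-z^2)(1+\E_B/z)$ decreases in $z$, these combine to give $\W_S^2>2z_1\E_B$. Since $1/(2z_1)$ equals exactly $\sqrt{\kappa+2+\sqrt{\kappa^2+8\kappa}}/(2\sqrt{2})$, this contradicts the hypothesis. Note that (a) by itself, without a bound on $|p_x(1)|$, only gives a condition like $\W_S^2<\kappa(1+\E_B)$; the stated threshold comes from (b).
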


{\color{black}
The next theorem addresses {\color{black} the response of the endpoint values of $p_x$ to the applied bias field $\E_B$, surface anchoring strength $\W_S$, cell thickness $L/\xi$, and bend-to-splay ratio $\kappa$.}

\allowdisplaybreaks
\begin{theorem}\label{thm:bdry}
We obtain the following compatibility conditions between the boundary values of $p_x$ and {\color{black} the parameters $\E_B$, $\W_S$, $L/\xi$, and $\kappa$.}
\begin{enumerate}
  \item {\color{black} Let $\E_B<1$, $\kappa\in (0,1)$, and $L/\xi\ge 1$}. We have the following necessary conditions on the  surface anchoring strength $\W_S$ to match different values of {\color{black} $p_x$} on the boundary. \begin{enumerate}
  		\item If $p_x(0) > \E_B$ and $p_x(1) < 0$, then $\W_S>\max\{\E_B\frac{\xi}{L},\sqrt{2\E_B-1}\}$ {\color{black} if $\E_B\geq \frac{1}{2}$; and $\W_S>\E_B\frac{\xi}{L}$ if $\E_B\in [0,\frac{1}{2})$},
  		\item If $p_x(0) > \E_B$ and $p_x(1) > 0$, then $\tilde{W}_S < \min\left\{\sqrt{\frac{1-(1-\kappa)\E_B^2}{\kappa}}, \sqrt{\E_B^2 + \frac{1+\kappa}{8}(\frac{\pi\xi}{L})^2}\right\}$.
  		\item If $p_x(0) < \E_B$ and $p_x(1) < 0$, then $\sqrt{\frac{1-(1-\kappa)\E_B^2}{\kappa}}< \W_S< \frac{\pi\xi}{L}\sqrt{\frac{(1+\kappa)-(1-\kappa^2)\E_B^2}{2(1-\E_B^2)}}$.
  		\item If $p_x(0) < \E_B$ and $p_x(1) > 0$, then $\W_S< \min\Bigl\{\sqrt{\E_B^2 + \frac{1+\kappa}{8}\left(\frac{\pi\xi}{L}\right)^2}, \frac{\pi\xi}{L}\sqrt{\frac{1+\kappa}{8}\left(\frac{1-(1-\kappa)\E_B^2}{1-\E_B^2}\right)}\Bigr\}$.
  	\end{enumerate}
  \item {\color{black} Let $\E_B<1$, $\kappa\in (0,1)$, and $L/\xi\ge 1$.  If $\W_S$ satisfies the bound} $$
  \W_S> \max\left\{\sqrt{\E_B^2+ \frac{1+\kappa}{2}\left(\frac{\pi\xi}{L}\right)^2}, \frac{\pi\xi}{L}\sqrt{\frac{1+\kappa}{2}\left(1+\frac{\kappa\E_B^2}{1-\E_B^2}\right)}\right\},
  $$
  then $p_x(0)>\E_B$ and $p_x(1)<0$.
  \item {\color{black} Let $\W_S>0$ and $\kappa\in (0,1)$.} If $\E_B > \frac{1+\W_S^2}{2}$, then $p_x(1)>0$. Moreover, if $\E_B>1+\frac{\W_S^2}{\kappa}$, then $p_x(1)\equiv 1$. 
\end{enumerate}
\end{theorem}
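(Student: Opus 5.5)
The approach rests on the first integral of \eqref{eqn:el} together with the monotonicity in part (1) of \Cref{thm:property}. Write $a(\theta)=\sin^2\theta+\kappa\cos^2\theta$. Multiplying \eqref{eqn:el} by $\theta'$ shows that
$$\Big(\frac{\xi}{L}\Big)^2 a(\theta)(\theta')^2-\big(\cos^2\theta-2\E_B\cos\theta\big)$$
is constant on $[0,1]$. Evaluating at the endpoints with \eqref{eqn:bdry} identifies this constant with the $C$ of \eqref{eqn:idbdry}. Hence, with $p=p_x=\cos\theta$, $p_0=p(0)$, $p_1=p(1)$, we have
$$g(p_0)=g(p_1)=C, \qquad \Big(\frac{\xi}{L}\Big)^2 a(\theta)(\theta')^2=C+p^2-2\E_Bp>0 \text{ in } (0,1).$$

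By \Cref{thm:property}(1), $\theta'>0$, so $p$ is strictly decreasing and $p_0>p_1$. This gives two further relations:
\begin{itemize}
\item[(i)] $C>\max_{t\in[p_1,p_0]}(2\E_Bt-t^2)$, which is at least $\E_B^2$ whenever $\E_B\in[p_1,p_0]$;
\item[(ii)] the length identity
$$1=\frac{\xi}{L}\int_{\theta(0)}^{\theta(1)}\frac{\sqrt{a(\theta)}}{\sqrt{C+\cos^2\theta-2\E_B\cos\theta}}\,d\theta .$$
\end{itemize}
Every necessary condition in part (1) and the sufficient condition in part (2) should follow from combining $g(p_0)=g(p_1)$, (i), and (ii).

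For part (1), I will treat each sign configuration separately.
\begin{itemize}
\item In case (a), $p_0>\E_B>0>p_1$, so $C>\E_B^2$. Write $C=g(p_1)$ with $p_1<0$ and compare with $g(\E_B)$ or $g(0)=\W_S^2$. This should force $\W_S^2>2\E_B-1$ when $\E_B\ge\frac12$.
\item The bound $\W_S>\E_B\xi/L$ should come from the boundary slope: $\theta'(0)=\W_S(L/\xi)\sin\theta_0/a(\theta_0)$ must dominate the growth required by (ii), or equivalently the sign of $p''$ at $0$ read off from \eqref{eqn:el}.
\item In cases (b)--(d), the bounds of the form $\sqrt{(1-(1-\kappa)\E_B^2)/\kappa}$ come from comparing $g$ at $p=\E_B$ and $p=0$, since $g(0)=\W_S^2$ and $g(\E_B)=\W_S^2(1-\E_B^2)/(1+(\kappa-1)\E_B^2)+\E_B^2$.
\item The bounds containing $\pi\xi/L$ come from (ii). The $\theta$-range is at most $\pi$ (resp. $\pi/2$ when $p$ keeps one sign), $a\le 1$, $a$ averages to $(1+\kappa)/2$ by Cauchy--Schwarz, and $C+p^2-2\E_Bp$ is bounded below using (i) and the expression of $C$ through $g$. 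Requiring the integral to equal $1$ then yields an upper bound on $\W_S$.
\end{itemize}

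Part (2) is the contrapositive bookkeeping. If $\W_S$ exceeds both thresholds, the $\pi$-bounds obtained in cases (b), (c), (d) fail, so the only remaining configuration is $p_0>\E_B$, $p_1<0$. Boundary cases with equality, such as $p_1=0$, are handled by the strict inequalities together with \Cref{rmk:trivsol}.

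For part (3), I will start with the factorization
$$g(t)-(2\E_B-1)=(1-t)\Big[\frac{\W_S^2(1+t)}{a}-(2\E_B-1-t)\Big],$$
where $a=1+(\kappa-1)t^2\ge\kappa$.
\begin{itemize}
\item If $\E_B>1+\W_S^2/\kappa$, the bracket is at most $2\W_S^2/\kappa-2\E_B+2<0$ on $[-1,1]$. Then $\max_{|t|\le1}g\le2\E_B-1$, so by \Cref{thm:existence} no nonconstant solution exists. Since $C=-2\E_B-1$ is incompatible as well, the solution is $\theta\equiv0$ and $p_x\equiv1$.
\item If $\E_B>\frac{1+\W_S^2}{2}$, suppose $p_1\le0$. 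Then $C=g(p_1)\le\max_{t\in[-1,0]}g(t)$. This bound should fall below $2\E_B-1$ because on $t\le0$ we have $\W_S^2(1+t)/a\le\W_S^2$ up to an estimate of $a$ near $t=0$. That contradicts part (4) of \Cref{thm:existence}.
\end{itemize}

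The main obstacle is the estimate on the length integral (ii) in part (1)(b)--(d). The precise constants $\frac{1+\kappa}{8}$ and $\frac{1-(1-\kappa)\E_B^2}{1-\E_B^2}$ require a sharp lower bound for $C+p^2-2\E_Bp$ in terms of $\W_S$ through $g(p_0)=g(p_1)$, and I expect to try a Cauchy--Schwarz/Jensen step on $\sqrt{a}$ first.
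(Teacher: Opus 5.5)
Your skeleton matches the paper's: the first integral with $g(p_0)=g(p_1)=C$, the length identity (ii) with Cauchy--Schwarz on $\sqrt a$, and $C>2\E_B-1$ from \Cref{thm:existence} for part (3). However, the proposal omits the structural fact that makes parts (1)(b)--(d) work, and with them your contrapositive proof of part (2): the shape of $g$ on $[-1,1]$.

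\textbf{The missing step: the shape of $g$.} On $[-1,1]$ we have $1-(1-\kappa)t^2\ge\kappa>0$, so $g''(t)=-2-2\kappa\W_S^2\frac{1+3(1-\kappa)t^2}{(1-(1-\kappa)t^2)^3}<0$. Also $g'>0$ on $[-1,0)$ and $g'<0$ on $(\E_B,1]$. Hence $g$ has a unique maximizer $t^*\in[0,\E_B]$, and $g(p_1)=g(p_0)$ with $p_1<p_0$ forces $p_1<t^*<p_0$. Only this turns the sign hypotheses into the orderings you need:
\begin{itemize}
\item $g(0)<C<g(\E_B)$ in (b);
\item $g(\E_B)<C<g(0)$ in (c);
\item $C>\max\{g(0),g(\E_B)\}$ in (d).
\end{itemize}
For example, in (b) the hypothesis $p_1>0$ gives $C=g(p_1)>g(0)=\W_S^2$ only because $g$ increases on $[0,p_1]$. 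Neither the elementary sign of $g'$ on $[-1,0)\cup(\E_B,1]$ nor your relation (i), which yields at most $C>\E_B^2$, provides this. The same is true of $C>g(\E_B)$ in (c). Until this is in place, ``comparing $g$ at $0$ and $\E_B$'' produces no inequality.

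\textbf{The length estimate is not the obstacle.} Write $C+p^2-2\E_Bp=(C-\E_B^2)+(p-\E_B)^2\ge C-\E_B^2$. Then (ii) and Cauchy--Schwarz give $C<\E_B^2+\frac{1+\kappa}{8}\bigl(\frac{\pi\xi}{L}\bigr)^2$ when $p_1>0$. They give $C<\E_B^2+\frac{1+\kappa}{2}\bigl(\frac{\pi\xi}{L}\bigr)^2$ for every nonconstant solution (trivially if $C\le\E_B^2$). The $\pi$-bounds in (b)--(d) are exactly these combined with $C>g(0)$ or $C>g(\E_B)$; no sharper lower bound is needed.

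The universal bound also proves part (2) directly, as the paper does. Combined with it, the two thresholds give $C<g(0)$ and $C<g(\E_B)$, and unimodality then forces $p_1<0$ and $p_0>\E_B$. This covers the equality cases $p_1=0$ and $p_0=\E_B$ automatically. In your contrapositive these cases are not handled: \Cref{rmk:trivsol} only excludes $p=\pm1$.

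\textbf{Part (1)(a).} Your own ingredients give more than the statement: $\E_B^2<C=g(p_1)<g(0)=\W_S^2$, i.e.\ $\W_S>\E_B$. This implies both $\W_S>\E_B\xi/L$ (as $L/\xi\ge1$) and $\W_S^2>2\E_B-1$ (as $\E_B^2\ge 2\E_B-1$). So the vague boundary-slope bullet can be dropped. The paper instead uses $g(1)<C<g(0)$ and the substituted length integral over $[0,\E_B]$, which give weaker bounds.

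\textbf{Part (3).} Your factorization argument is correct, and since $(1+t)/(1+(\kappa-1)t^2)\le 1$ on $[-1,0]$, the second bullet needs no further estimate. However, your claim that $C=-2\E_B-1$ is incompatible is unjustified. The constant $\theta\equiv\pi$ solves the BVP for all parameters, so to single out $p_x\equiv1$ you must add something. One option is the energy comparison $I(0)=\frac12\bigl(\frac{L}{\xi}\bigr)^2(1-2\E_B)<I(\pi)=\frac12\bigl(\frac{L}{\xi}\bigr)^2(1+2\E_B)$, which identifies $\theta\equiv0$ as the minimizer. The paper's proof glosses over the same point.
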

}
{\color{black}
\begin{remark}
Part 1 (a) of Theorem \ref{thm:bdry} shows that in order for $p_x(0)>\E_B$ and $p_x(1)<0$, {\color{black} the} surface anchoring needs to be strong enough {\color{black} relative to the applied bias and cell thickness}. Part 2 {\color{black} then} gives a sufficient condition on the surface anchoring to guarantee $p_x(0)>\E_B$ and $p_x(1)<0$. {\color{black} On the other hand,} part 1 (b), (c), and (d) present necessary conditions on the surface anchoring strength for {\color{black} the} other possible boundary conditions on ${\color{black} p_x(\tilde{x})}$. Part 3 shows that for fixed surface anchoring, if the applied bias field is strong enough, then $\theta(1)<\frac{\pi}{2}$, i.e. the polar director {\color{black} reorients in the direction of the applied field at the boundary $\tilde{x}=1$. Part 3 also shows that when the applied bias is very large relative to the anchoring strength and bend-to-splay ratio, the only admissible solution of \eqref{eqn:el}-\eqref{eqn:bdry} is the trivial solution $p_x\equiv 1$.}
\end{remark}
}

The next  theorem states the {\color{black} asymptotic} effect{\color{black}s} of the bias electric field, {\color{black} surface anchoring strength, and cell thickness on the structure of the polar director.}

\begin{theorem}\label{thm:eff}
{\color{black} Let $\theta$ be any nonconstant solution of \eqref{eqn:el}-\eqref{eqn:bdry}. Then}
\begin{enumerate}
\item If $\E_B<1$ and $\W_S> \max\left\{\sqrt{\E_B^2+ \frac{1+\kappa}{2}\left(\frac{\pi\xi}{L}\right)^2}, \frac{\pi\xi}{L}\sqrt{\frac{1+\kappa}{2}\left(1+\frac{\kappa\E_B^2}{1-\E_B^2}\right)}\right\}$, then $p_x(0)$ increases as $\E_B$ increases. {\color{black} When $\tilde{E}_B=0$,} {\color{black} $\int_0^1 p_x(\tilde{x}) d\tilde{x} =0$}. {\color{black} When} $\E_B \rightarrow \infty,$  $\int_0^1 p_x(\tilde{x}) d\tilde{x} = 1$.
\item If $\E_B=0$,  then $p_x(0)$ increases from $0$ to $1$ and $p_x(1)$ decreases from $0$ to $-1$ as $\W_S$ increases from 0 to $\infty$. 
\item Fix $\E_B<1$ and $\W_S> \max\left\{\sqrt{\E_B^2+ \frac{1+\kappa}{2}\left(\frac{\pi\xi}{L}\right)^2}, \frac{\pi\xi}{L}\sqrt{\frac{1+\kappa}{2}\left(1+\frac{\kappa\E_B^2}{1-\E_B^2}\right)}\right\}$. If $\frac{L}{\xi}$ increases, then $p_x(0)$ increases {\color{black} and}  $p_x(1)$ decreases.
\end{enumerate}
\end{theorem}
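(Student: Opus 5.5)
The plan rests on the first integral of \eqref{eqn:el}. Write $\lambda=L/\xi$. Multiplying \eqref{eqn:el} by $\theta'$ gives
\[
\tfrac12(\sin^2\theta+\kappa\cos^2\theta)(\theta')^2+\tfrac{\lambda^2}{2}\bigl(\cos^2\theta-2\E_B\cos\theta\bigr)\cdot(-1)\cdot(-1)=\text{const},
\]
that is, $(\sin^2\theta+\kappa\cos^2\theta)(\theta')^2=\lambda^2\bigl(C+\cos^2\theta-2\E_B\cos\theta\bigr)$ after normalizing the constant. The boundary condition \eqref{eqn:bdry} then yields \eqref{eqn:idbdry} at both endpoints, with the same $C$. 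By \Cref{thm:property}(1), $\theta'>0$, so $\x$ can be inverted. Setting $t=\cos\theta$ and $t_0=p_x(0)$, $t_1=p_x(1)$, we get $t_0>t_1$, $g(t_0)=g(t_1)=C$, and
\[
\lambda=\int_{\theta(0)}^{\theta(1)}\sqrt{\frac{\sin^2\theta+\kappa\cos^2\theta}{C+\cos^2\theta-2\E_B\cos\theta}}\,d\theta=:\Phi(t_0,t_1;\E_B,\W_S).
\]
Each part of the theorem then reduces to implicit differentiation of the two-equation system $g(t_0)=g(t_1)$ and $\Phi=\lambda$ with respect to the relevant parameter.

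For part 1, under the stated lower bound on $\W_S$, \Cref{thm:bdry}(2) gives $t_0>\E_B$ and $t_1<0$. This fixes the branches: $t_0$ lies on the branch of $g$ to the right of its maximum, and $t_1$ on the left branch. I will parametrize by $C$. Then $t_0(C)$ and $t_1(C)$ are determined by $g=C$, and $\partial_C\Phi<0$ termwise, apart from endpoint contributions that must be controlled. Monotonicity of $g$ in $\E_B$ ($\partial_{\E_B}g=2t$) together with the implicit function theorem should give $dt_0/d\E_B>0$. Signs would be checked using $g'(t_0)<0$ and $g'(t_1)>0$.

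The integral statements use $\int_0^1 p_x\,d\x=\int \cos\theta\,(d\x/d\theta)\,d\theta$. When $\E_B=0$, \Cref{thm:property}(2) gives oddness of $p_x$ about $1/2$, so the integral is $0$. For $\E_B\to\infty$, \Cref{thm:bdry}(3) shows that $p_x\equiv1$ once $\E_B>1+\W_S^2/\kappa$; I read the limit as along this regime, where the solution becomes trivial, so the integral equals $1$.

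For part 2, with $\E_B=0$ symmetry gives $t_1=-t_0$ and $\theta(1/2)=\pi/2$. The problem then reduces to the single equation
\[
\lambda/2=\int_{\arccos t_0}^{\pi/2}\sqrt{\frac{\sin^2+\kappa\cos^2}{C+\cos^2}}\,d\theta,\qquad C=g(t_0).
\]
As $\W_S\to0$, the condition $C=g(t_0)$ forces $C\to-t_0^2$, which makes the integral blow up unless $t_0\to0$. As $\W_S\to\infty$, the same analysis forces $t_0\to1$. Monotonicity follows from differentiating in $\W_S$, using \Cref{thm:property}(2): larger $C$ gives larger $\cos^2\theta$ pointwise. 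So it suffices to show that $C$ increases with $\W_S$, which I would obtain from the integral identity since the integrand decreases in $C$.

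Part 3 uses the same system with $\lambda$ as the parameter. Increasing $\lambda$ requires $\Phi$ to increase. Since $\Phi$ is decreasing in $C$ (the integrand decreases while the interval shrinks or grows in a controlled way), $C$ must decrease. On the fixed branches, where $g'(t_0)<0$ and $g'(t_1)>0$, decreasing $C$ pushes $t_0$ up and $t_1$ down. The main obstacle throughout is the sign of $\partial_C\Phi$: the endpoints move with $C$, and the integrand is singular at the endpoints when $C+\cos^2\theta-2\E_B\cos\theta\to0$. I would first try substituting $t$ and rescaling to a fixed interval, and use the hypothesis on $\W_S$ to keep the endpoints away from the turning points of $g$.
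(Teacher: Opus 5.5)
Your overall framework (the first integral, the constraint $g(t_0)=g(t_1)=C$, and implicit differentiation of $\lambda=\Phi$) is the paper's, and several pieces go through. The two average-value claims are fine. Part 3 is fine: there $\lambda$ enters neither $g$ nor the integrand, so $\lambda=\Phi(C)$ with $\Phi'(C)<0$ forces $C$ to decrease, hence $t_0$ to increase and $t_1$ to decrease; this is cleaner than the paper's computation. Part 2 via the half-cell identity $\lambda/2=\Psi(C,t_0)$ also works. That identity makes $t_0$ increasing in $C$, and you must combine it with $C=g(t_0)$, $\partial_{\W_S}g>0$, $g'(t_0)<0$; the integrand's monotonicity in $C$ alone does not give this. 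For $\W_S\to0$ the mechanism is positivity rather than blow-up: $\theta'(1/2)>0$ and $\cos\theta(1/2)=0$ give $C>0$, hence $t_0^2<\W_S^2$.

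The genuine gap is the monotonicity claim of part 1. With $C$ as parameter,
\[
\frac{dt_0}{d\E_B}=\frac{1}{g'(t_0)}\Bigl(\frac{dC}{d\E_B}-2t_0\Bigr),
\]
so you need $dC/d\E_B<2t_0$. The fact that $\partial_{\E_B}g=2t$ raises $g$ at fixed $C$ does not settle this, since $C$ moves too. But $dC/d\E_B=-\partial_{\E_B}\Phi/\partial_C\Phi$, and $\partial_{\E_B}\Phi|_C$ has no definite sign. At fixed $C$ both roots move right ($\partial_{\E_B}t_i|_C=-2t_i/g'(t_i)>0$ for $t_0>0>t_1$), with opposite effects on $\Phi$. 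Also $\partial_{\E_B}(C+y^2-2\E_By)^{-1/2}=y\,(C+y^2-2\E_By)^{-3/2}$ changes sign at $y=0$. The sign checks you list ($g'(t_0)<0<g'(t_1)$) do not resolve this.

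The missing idea is to hold $t_0=p_x(0)$ fixed instead of $C$, which is what the paper does through its function $G$. Writing $C=g(t_0)$, the radicand becomes $G(y)=g(t_0)+y^2-2\E_By$. At fixed $t_0$ one has $\partial_{\E_B}G=2(t_0-y)>0$ on $(t_1,t_0)$ and $\partial_{\E_B}t_1=2(t_0-t_1)/g'(t_1)>0$, so both effects decrease $\Phi$. At fixed $\E_B$, all three effects of raising $t_0$ increase $\Phi$: the upper limit rises, $C=g(t_0)$ falls, and $dt_1/dt_0=g'(t_0)/g'(t_1)<0$. Hence $dt_0/d\E_B=-\partial_{\E_B}\Phi/\partial_{t_0}\Phi>0$, which is exactly $dC/d\E_B<2t_0$.

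The obstacle you single out is not real. By \eqref{eqn:bdry}, at each endpoint $C+t^2-2\E_Bt=\W_S^2(1-t^2)/(1-(1-\kappa)t^2)>0$, so the $y$-integrand is smooth on $[t_1,t_0]$ and differentiation under the integral is harmless. Moreover, $\partial_C\Phi<0$ holds with both endpoint terms of the same sign as the bulk term. Finally, strict concavity of $g$ on $[-1,1]$ already places its maximum strictly between $t_1$ and $t_0$, so the branches are fixed without the hypothesis on $\W_S$.
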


The last theorem states the  local stability of the nontrivial static polar director. 

\begin{theorem}\label{thm:stability}
If $\E_B=0$ and $\W^2_S<\frac{2}{1+\kappa}$ and $\theta$ is {\color{black} a}  nontrivial solution of \eqref{eqn:el}-\eqref{eqn:bdry}, then $Hess_{\theta}(\varphi,\varphi)\geq 0 $ for any $\varphi\in W^{1,2}_0(0,1)$.
\end{theorem}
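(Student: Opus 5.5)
We compute the second variation of the energy at $\theta$ and show it is nonnegative on $W^{1,2}_0(0,1)$. Write $a(\theta)=\sin^2\theta+\kappa\cos^2\theta$ and $\lambda=L/\xi$. With $\E_B=0$ the energy in terms of $\theta$ is
$$\tilde F(\theta)=\int_0^1\Big(\tfrac12 a(\theta)(\theta')^2+\tfrac12\lambda^2\cos^2\theta\Big)d\x+\text{boundary terms}.$$
Since $\varphi\in W^{1,2}_0$ vanishes at $\x=0,1$, the boundary terms do not contribute. Differentiating twice gives
$$Hess_\theta(\varphi,\varphi)=\int_0^1\Big(a(\theta)(\varphi')^2+2a'(\theta)\theta'\varphi\varphi'+\tfrac12 a''(\theta)(\theta')^2\varphi^2-\lambda^2\cos2\theta\,\varphi^2\Big)d\x ,$$
with $a'=(1-\kappa)\sin2\theta$ and $a''=2(1-\kappa)\cos2\theta$.

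The natural tool is the Jacobi-field (Picone) trick. By \Cref{thm:property}(1), $\theta'>0$ on $[0,1]$. Differentiating \eqref{eqn:el} in $\x$ shows that $w=\theta'$ solves the linearized (Jacobi) equation
$$-(a w')'-(a'\theta' w)'+a'\theta' w'+\tfrac12a''(\theta')^2w-\lambda^2\cos2\theta\, w=0.$$
This holds because \eqref{eqn:el} is autonomous, so translations give a Jacobi field. For $\varphi\in W^{1,2}_0$ write $\varphi=w\psi$, which is legitimate since $w>0$ on the closed interval and $w$ is smooth by \Cref{lm:reg} (and bounded away from $0$). Expanding, integrating by parts and using the Jacobi equation, the quadratic form collapses to
$$Hess_\theta(\varphi,\varphi)=\int_0^1 a(\theta)\,w^2(\psi')^2\,d\x+\Big[\text{boundary term}\Big]_0^1 .$$
The boundary term has the form $(a w' + a'\theta' w)\,w\psi^2$, and it vanishes because $\psi=\varphi/w$ vanishes at the endpoints. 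Since $a\ge\kappa>0$, this gives $Hess_\theta\ge0$.

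The main obstacle is doing the Picone computation carefully with the cross term $2a'\theta'\varphi\varphi'$. The cleanest route is to write the integrand as $\tfrac12\frac{d^2}{ds^2}$ of the Lagrangian $\ell(\theta,\theta')$. The form is then $\int \ell_{pp}(\varphi')^2+2\ell_{p\theta}\varphi\varphi'+\ell_{\theta\theta}\varphi^2$, and the standard Legendre–Jacobi identity with a positive Jacobi field gives $\int \ell_{pp}w^2(\psi')^2$. So we only need to check that $w=\theta'$ is a Jacobi field, which follows from autonomy.

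If the hypothesis $\W_S^2<\frac{2}{1+\kappa}$ is really needed, the reason is this: the argument above uses only $\theta'>0$ on the closed interval, which \Cref{thm:property}(1) supplies for every nonconstant solution. The anchoring bound would then presumably enter only through the existence of a nontrivial solution in this regime, via \Cref{thm:existence}(4) and \Cref{thm:bdry}. Alternatively, it could be used in a direct estimate of the form $\lambda^2\cos2\theta\le\lambda^2$ combined with a Poincaré inequality. As a backup we would therefore attempt such a direct bound. We would use the first integral \eqref{eqn:idbdry}, namely $a(\theta)(\theta')^2=\lambda^2(C'+\cos^2\theta)$ with the boundary values controlled by $\W_S$, to bound the negative part $-\lambda^2\cos2\theta$ by $\tfrac{1+\kappa}{2}\W_S^2\lambda^2$-type quantities. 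Absorbing these would exactly produce the threshold $\W_S^2<2/(1+\kappa)$.
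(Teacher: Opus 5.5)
Your proof is correct, and it takes a genuinely different route from the paper's.

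The paper never uses a Jacobi field. In Lemma~\ref{lm:st} it integrates the cross term $2(1-\kappa)\sin 2\theta\,\theta'\varphi\varphi'$ by parts and substitutes $\theta''$ from \eqref{eqn:el}. For $\varphi\in W^{1,2}_0$ this turns the form into $\int_0^1 a(\theta)(\varphi')^2+\big[(1-\kappa)(\theta')^2+(L/\xi)^2\big]\frac{(1+\kappa)\sin^2\theta-\kappa}{a(\theta)}\varphi^2\,dx$, plus an $\E_B$-term of indefinite sign. Nonnegativity is then read off pointwise. That is why the paper needs both $\E_B=0$ and $\sin^2\theta\ge\frac{\kappa}{1+\kappa}$ on $[0,1]$. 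The latter bound is extracted in Lemma~\ref{lm:sinxbd} from the first integral at $x=\frac12$, where $\theta=\frac{\pi}{2}$ by Lemma~\ref{lm:sym}; it uses $\W_S^2<\frac{2}{1+\kappa}$ and is propagated by monotonicity.

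You instead use the translation field $w=\theta'$, which is positive on the closed interval by Theorem~\ref{thm:property}(1): Lemma~\ref{lm:mono}, together with the boundary condition and $\theta\in(0,\pi)$. The Legendre--Jacobi identity then gives $Hess_\theta(\varphi,\varphi)=\int_0^1 a(\theta)(\theta')^2\big((\varphi/\theta')'\big)^2dx$. Your coefficients $\ell_{pp},\ell_{p\theta},\ell_{\theta\theta}$, the Jacobi equation and the boundary term all check out. This expression is nonnegative even where the paper's zeroth-order coefficient is negative, and it is strictly positive for $\varphi\neq0$.

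The payoff is that neither extra hypothesis is needed. Your argument works for every nonconstant solution and every $\W_S>0$. It also works for $\E_B>0$: the bias term keeps the Lagrangian autonomous, since it only adds $(L/\xi)^2\E_B\cos\theta$ to $\ell_{\theta\theta}$, and Lemma~\ref{lm:mono} holds for all $\E_B\ge0$. The paper's route buys only an explicit cross-term-free expression, and it pays for that with the two extra hypotheses.

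Your last paragraph can therefore be dropped. The anchoring bound plays no role, not even through existence, since a nontrivial solution is assumed, and the speculative backup estimate is unnecessary.
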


\section{Main results} \label{sec:lproof}
 In the remaining part of the paper, we shall use $x$ instead of $\x$ for the dimensionless variable. {\color{black} Motivated by \eqref{eqn:tilF}, \eqref{eqn:bulkf}, and \eqref{eqn:surff}, we define the dimensionless energy $I(\theta)$ in terms of the polar director $\theta$ as} 
\begin{eqnarray}\label{eqn:enI}
I(\theta)&=&\frac{1}{2}\int_0^1(\sin^2\theta+\kappa\cos^2\theta)(\theta')^2dx+\frac{1}{2}\left(\frac{L}{\xi}\right)^2\int_0^1(\cos^2\theta-2\cos\theta \E_B )dx\\ \nonumber
&&-\W_S\left(\frac{L}{\xi}\right)\cos\theta\bigg|_{x={\color{black}0}}+ \W_S\left(\frac{L}{\xi}\right)\cos\theta\bigg|_{x=1}.
\end{eqnarray}

\subsection{Existence and uniqueness}
{\color{black} The central result of this section is the proof of \Cref{thm:existence}. We first prove a lemma to establish that any nontrivial solution of BVP \eqref{eqn:el}-\eqref{eqn:bdry} must be strictly monotonically increasing on $(0,1)$.}

\begin{lemma}\label{lm:mono}
If $\theta$ is a smooth nonconstant solution of \eqref{eqn:el}-\eqref{eqn:bdry}, then $\theta'>0$ on $(0,1)$. 
\end{lemma}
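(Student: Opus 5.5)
The plan is to combine a first integral of \eqref{eqn:el} with ODE uniqueness. Write $a(\theta)=\sin^2\theta+\kappa\cos^2\theta\ge\kappa>0$, so $a'(\theta)=(1-\kappa)\sin2\theta$. In this notation \eqref{eqn:elw} reads
\[
(a(\theta)\theta')'-\tfrac12 a'(\theta)(\theta')^2+\left(\tfrac{L}{\xi}\right)^2\sin\theta(\cos\theta-\E_B)=0 .
\]
Multiplying by $\theta'$ and using $\sin\theta\,\theta'=-(\cos\theta)'$ gives the conserved quantity
\[
\tfrac12 a(\theta)(\theta')^2-\tfrac12\left(\tfrac{L}{\xi}\right)^2\bigl(\cos^2\theta-2\E_B\cos\theta\bigr)\equiv \text{const}.
\]
Inserting the boundary condition \eqref{eqn:bdry}, $a\theta'=\W_S\frac{L}{\xi}\sin\theta$ at $x=0,1$, shows the constant is $\frac12(L/\xi)^2C$ with $C$ as in \eqref{eqn:idbdry}. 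Hence on $[0,1]$
\[
a(\theta)(\theta')^2=\left(\tfrac{L}{\xi}\right)^2 F(\theta),\qquad F(\theta)=C+\cos^2\theta-2\E_B\cos\theta .
\]
So $\theta'$ can vanish only where $F(\theta)=0$.

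\textbf{Step 1: no equilibrium zeros and positive endpoint slopes.} Since $\theta\in[0,\pi]$, the boundary condition gives $\theta'(0),\theta'(1)\ge0$. If $\theta'(x_0)=0$ at a point where also $\sin\theta(x_0)(\cos\theta(x_0)-\E_B)=0$, then $\theta$ and the corresponding constant solution share Cauchy data at $x_0$. The equation is a regular second-order ODE because $a\ge\kappa>0$, so uniqueness forces $\theta$ to be constant, a contradiction. Consequently every zero $x_0$ of $\theta'$ has $\theta''(x_0)\ne0$: it is a nondegenerate turning point and zeros of $\theta'$ are isolated. In particular, at an endpoint $\theta'=0$ would force $\sin\theta=0$, i.e. $\theta\in\{0,\pi\}$, which is an equilibrium. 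Hence $\theta'(0)>0$ and $\theta'(1)>0$.

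\textbf{Step 2: structure of a solution with an interior turning point.} Suppose $\theta'$ vanishes somewhere in $(0,1)$. The equation is invariant under $x\mapsto 2x_0-x$, so uniqueness gives $\theta(x_0+s)=\theta(x_0-s)$ about each turning point $x_0$. Together with the relation $a(\theta')^2=(L/\xi)^2F(\theta)$, this shows $\theta$ oscillates monotonically between two consecutive simple roots $\theta_-<\theta_+$ of $F$ (simple by Step 1). It is therefore periodic, with increasing and decreasing half-periods of equal length $T$. Because $\theta'>0$ at both ends, there is a maximal interval $(\alpha,\beta)\subset(0,1)$ with $\theta'<0$, $\theta(\alpha)=\theta_+$, $\theta(\beta)=\theta_-$.

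\textbf{Step 3: the contradiction, which I expect to be the main obstacle.} Both endpoint values lie on the increasing branch $\theta'=\sqrt{F/a}\,L/\xi$ of the same orbit, and both satisfy the boundary relation $\sqrt{a F(\theta)}=\W_S\sin\theta$, i.e. $g(\cos\theta)=C$. My first attempt is to show that the increasing branch of the orbit meets the curve $a\theta'=\W_S(L/\xi)\sin\theta$ at most once in $(\theta_-,\theta_+)$. This would force $\theta(0)=\theta(1)$ with an integer number of full periods in between, in particular at least one decreasing half-period. I would then try to exclude this energetically. Compare $\theta$ with the function obtained by cutting out one full period and reparametrising by dilation; alternatively, note that $\theta(0)=\theta(1)$ makes the surface terms in \eqref{eqn:enI} cancel. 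Then test the weak form against $\varphi=\theta'$ to get an identity whose two sides have opposite signs.

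If the single-intersection claim fails, the fallback is a phase-plane monotonicity argument: examine $\frac{d}{d\theta}\bigl(a F-\W_S^2\sin^2\theta\bigr)$ on $[\theta_-,\theta_+]$ to locate the admissible endpoints. Once any interior zero of $\theta'$ is excluded, Steps 1--3 give $\theta'>0$ on $(0,1)$, and in fact on $[0,1]$.
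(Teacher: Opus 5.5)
Your Steps 1 and 2 are sound. Step 3, however, is where the contradiction has to come from, and it is not a proof. The ``single-intersection'' claim is only something you intend to try. The energetic exclusion (cutting out a period, cancelling surface terms, testing with $\varphi=\theta'$) is a list of possible tools, with no argument that any of them produces a sign contradiction. As written, the proposal never rules out an interior zero of $\theta'$, which is the whole content of the lemma.

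The missing idea is that the configuration you reach at the end of Step 2 is already impossible, for a purely local reason. You have an interior maximum at $\alpha$ and an interior minimum at $\beta$, with $\theta_+=\theta(\alpha)>\theta(\beta)=\theta_-$. Both values lie in $(0,\pi)$, because $0$ and $\pi$ are equilibria and are excluded by Step 1. At a zero of $\theta'$ the equation reduces to $a(\theta)\theta''=(L/\xi)^2\sin\theta\,(\E_B-\cos\theta)$.
\begin{itemize}
\item At $\alpha$, $\theta''(\alpha)<0$ forces $\cos\theta_+>\E_B$.
\item At $\beta$, $\theta''(\beta)>0$ forces $\cos\theta_-<\E_B$.
\end{itemize}
But $\theta_-<\theta_+$ gives $\cos\theta_->\cos\theta_+$, a contradiction. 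Equivalently, $F(\theta)=(\cos\theta-\E_B)^2+C-\E_B^2$ is strictly monotone on each side of $\arccos\E_B$ in $(0,\pi)$, and increasing throughout $(0,\pi)$ if $\E_B\ge 1$. So $F$ cannot vanish at both ends of an interval on which it is positive: the effective potential has no well inside $(0,\pi)$, and there are no closed orbits there.

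This is essentially the paper's proof. It takes the first zero $x_0$ of $\theta'$ and excludes $\theta''(x_0)=0$ by Picard--Lindel\"of. It then gets $\cos\theta(x_0)>\E_B$ from $\theta''(x_0)<0$. In the region $\cos\theta>\E_B\ge 0$ the equation forces $\theta''<0$, so $\theta$ keeps decreasing and $\theta'(1)<0$, contradicting \eqref{eqn:bdry}. No periodicity, boundary-curve intersection count, or energy comparison is needed.
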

\begin{proof}
{\color{black} If $\theta$ is nontrivial, we must have $\pi>\theta(0)>0$. Otherwise the boundary condition \eqref{eqn:bdry} implies $\theta'(0)=0$, so that the Picard-Lindel\"{o}f existence and uniqueness theorem yields $\theta(x)\equiv 0$ or $\theta\equiv \pi$. }Since  $\theta(0)>0$,  {\color{black}\eqref{eqn:bdry}} implies $\theta'(0)>0$. {\color{black} By contradiction,} assume $\theta'(x)$ is no{\color{black} n}positive for {\color{black} some point} $x\in (0,{\color{black}1)}$. {\color{black} In particular,} there exists $x_0\in (0,{\color{black}1})$ such that $\theta'(x_0)=0$. Without loss of generality, we assume  $\theta'(x)>0$ on $[0,x_0)$ and $\theta'(x_0)=0$, thus $\theta''(x_0)\leq 0$. On the other hand, by \eqref{eqn:el}, we have
$$
\theta''(x_0)=\frac{1}{\sin^2\theta(x_0)+\kappa\cos^2\theta(x_0)}\left(\frac{L}{\xi}\right)^2\sin\theta(x_0)(\E_B-\cos\theta(x_0)).
$$
Thus we must have $\E_B\leq\cos\theta(x_0)$, which yields a contradiction if 
 $\E_B>1$. If $\E_B=1$, we conclude that $\theta''(x_0)=0$ and $\cos\theta(x_0)=1.$ Since $\theta(x_0)>\theta(0)>0$, this is impossible. Lastly, if $\E_B<1$, we have $\E_B\leq \cos\theta(x_0)$. Since $\theta''(x_0)\leq 0$ and $\theta'(x_0)=0$, {\color{black}if $\theta''(x_0)=0$, equation \eqref{eqn:el} implies $\theta(x_0)\in \{0,\pi,\arccos\E_B\}$. If $\theta(x_0)=0$ or $\theta(x_0)=\pi$, the Picard-Lindel\"{o}f theorem again implies that  $\theta(x)\equiv 0$ or $\theta\equiv\pi$, a contradiction. If $\cos\theta(x_0)=\E_B$, the Picard-Lindel\"{o}f theorem gives $\cos\theta(x)\equiv \E_B$, which contradicts the boundary condition at $x=0$.} {\color{black}Therefore we must have $\theta''(x_0)<0$ and  $\theta(x_0)<\pi$, so that} $\theta'(x)\leq 0 $ for $x>x_0$ and $x$ near $x_0$. Therefore $\theta(x)<\theta(x_0)$, which yields $\cos\theta(x)~{\color{black}\geq \cos\theta(x_0)}> \E_B$ for  $x>x_0$ and $x-x_0\ll 1$. Plugging back into the equation \eqref{eqn:el}, we conclude $\theta''(x)<0$ for $x>x_0$ and $x-x_0\ll 1$. Thus $\theta'(x)<0$ for $x>x_0$ and $x-x_0\ll 1$. {\color{black}Fix a point $x_1>x_0$, $x_1-x_0\ll 1$. Repeating the argument at $x_1$ using equation \eqref{eqn:el}, we conclude $\theta'(x)<0$ for $x>x_1$ and $x-x_1\ll1$. }Continu{\color{black} ing} this, we get $\theta'(x)<0$ for all  $x>x_0$. In particular{\color{black} ,} $\theta'(1)<0$, a contradiction to the given boundary condition at $x=1$. Therefore we must have $\theta'(x)>0$ on $[0,1]$. 
\end{proof}

{\color{black} We now prove \Cref{thm:existence}. We show existence of a solution by applying Tonelli's direct method in the calculus of variations to the dimensionless energy \eqref{eqn:enI}. We then prove uniqueness by first observing that the Euler-Lagrange equation \eqref{eqn:el} is integrable, leading to an algebraic constraint on the boundary values $\theta(0),\theta(1)$, which we analyze in an argument by cases.}

\begin{proof}[Proof of \Cref{thm:existence}]
{\color{black} Let $(\theta_n)\subset \mathcal{A}$ be a minimizing sequence such that $I(\theta_n)\le C$ for all $n\in \mathbb{N}$. Since $I(\theta)\ge -\tilde{E}_B(\frac{L}{\xi})^2$ for all $\theta\in \mathcal{A}$, it follows that $(\theta_n)$ is bounded in $W^{1,2}(0,1)\cap L^\infty(0,1)$.} Therefore,  we can find $\theta\in W^{1,2}\cap L^{\infty}(0,1)$ such that, up to a subsequence, $\theta_n \rightharpoonup \theta$ in $W^{1,2}(0,1)$ and $\theta_n \rightarrow \theta$ a.e. on (0,1). {\color{black} Thus} $\theta \in \mathcal{A}$. Since $I(\theta)$ is lower semi-continuous on $\mathcal{A}$, we have 
$$
I(\theta)\leq \liminf_{n\rightarrow \infty} I(\theta_n).
$$
{\color{black} This shows that } $\theta$ is a minimizer of $I$ in $\mathcal{A}${\color{black} ,} which implies that $\theta$ is a weak solution of \eqref{eqn:el}-\eqref{eqn:bdry}. By Lemma \ref{lm:reg}, $\theta$ is smooth. {\color{black} Multiplying \eqref{eqn:el} by $\theta^\prime$ and integrating, we obtain}
\begin{equation}\label{eqn:fi}
(\sin^2\theta+\kappa\cos^2\theta)(\theta')^2-\left(\frac{L}{\xi}\right)^2(\cos^2\theta-2\cos\theta \E_B)=\tilde{C}
\end{equation}
 for some constant $\tilde{C}.$
Plugging in the boundary data {\color{black} \eqref{eqn:bdry}}, we get for $x=0,1$ 
$$
\frac{\W_S^2\left(\frac{L}{\xi}\right)^2\sin^2\theta(x)}{\sin^2\theta(x)+\kappa\cos^2\theta(x)}-\left(\frac{L}{\xi}\right)^2(\cos^2\theta(x)-2\cos\theta(x) \E_B)=\tilde{C}.
$$
{\color{black} Hence,} $\cos\theta(0)$ and $\cos\theta(1)$ are solutions of 
\begin{equation}\label{eqn:alg}
\frac{\W_S^2(1-{\color{black}t}^2)}{1+(\kappa-1){\color{black}t}^2}-({\color{black}t}^2-2{\color{black}t}\E_B)=\tilde{C}\left(\frac{\xi}{L}\right)^2=C.
\end{equation}
{\color{black} Since} \eqref{eqn:alg} is equivalent to a fourth order algebraic equation, {\color{black} it follows that} it has at most four real solutions. {\color{black} On the other hand,} BVP \eqref{eqn:el}-\eqref{eqn:bdry} admits a solution only if \eqref{eqn:alg} admits {\color{black} a solution} satisfying $|{\color{black}t}|\leq 1$. Set 

\begin{equation}\label{eqn:f}
 f({\color{black}t})=\frac{\W_S^2(1-{\color{black}t^2})}{1+(\kappa-1){\color{black}t}^2}-({\color{black}t}^2-2{\color{black}t}\E_B)-C=g({\color{black}t})-C.
\end{equation}
Since $f({\color{black}t})\rightarrow -\infty$ as ${\color{black}t}\rightarrow \pm \infty$ and 
$$
\lim_{{\color{black}t}\rightarrow -\frac{1}{\sqrt{1-\kappa}}^-}f({\color{black}t})={\color{black} +}\infty, \hspace{0.1 in} \lim_{{\color{black}t}\rightarrow -\frac{1}{\sqrt{1-\kappa}}^+}f({\color{black}t})=-\infty,
$$
and 
$$
\lim_{{\color{black}t}\rightarrow \frac{1}{\sqrt{1-\kappa}}^-}f({\color{black}t})=-\infty, \hspace{0.1 in} \lim_{{\color{black}t}\rightarrow \frac{1}{\sqrt{1-\kappa}}^+}f({\color{black}t})=+\infty,
$$
there exist $-\infty<{\color{black}t}_1< -\frac{1}{\sqrt{1-\kappa}}$ and $\frac{1}{\sqrt{1-\kappa}}<{\color{black}t}_4<\infty$ such that $f({\color{black}t_i})=0$ {\color{black} for $i=1, 4$}. We identify the locations of the other solutions ${\color{black}t}_2,{\color{black}t}_3$ for different values of $C$. For a {\color{black}nontrivial} solution to \eqref{eqn:el}-\eqref{eqn:bdry}, we must have $-1\leq {\color{black}t}_2<{\color{black}t}_3\leq 1$, with $\cos\theta(0)={\color{black}t}_3>0$ and $\cos\theta(1)={\color{black}t}_2<{\color{black}t}_3$ {\color{black} by \Cref{lm:mono}. We proceed by cases on $C$.}

\textbf{Case I: $C<-2\E_B-1.$}
In this case, we have $f(-1)=-(1+2\E_B)-C>0$ and $f(1)=-(1-2\E_B)-C>0$, therefore we must have $$-\frac{1}{\sqrt{1-\kappa}}<{\color{black}t}_2<-1\text{ and }1<{\color{black}t}_3<\frac{1}{\sqrt{1-\kappa}},$$ which implies that \eqref{eqn:el}-\eqref{eqn:bdry} admits no solution. 

\textbf{Case II: $C=-2\E_B-1.$}
In this case, we have $f(-1)=0$ and $f(1)>0${\color{black} . I}t then follows 
$$
-1={\color{black}t}_2<1<{\color{black}t}_3<\frac{1}{\sqrt{1-\kappa}},
$$
which corresponds to the unique constant solution $\theta(x)\equiv \pi$ for \eqref{eqn:el}-\eqref{eqn:bdry}.

\textbf{Case II{\color{black} I}: $-2\E_B-1<C<2\E_B-1.$}
In this case, we have $f(-1)<0$ and $f(1)>0$. Therefore we have 
$$
-1<{\color{black}t}_2<1<{\color{black}t}_3<\frac{1}{\sqrt{1-\kappa}}.
$$
If $\theta$ is a solution of \eqref{eqn:el}-\eqref{eqn:bdry}, we must have $\cos\theta(0)=\cos\theta(1)={\color{black}t}_2$. By Lemma \ref{lm:mono}, {\color{black} it follows} $\theta(x)\equiv\theta(0)$, which contradicts  the boundary conditions if $\theta(0)  \not\in \{0,\pi\}$. 

\textbf{Case {\color{black} IV}: $C=2\E_B-1${\color{black} .}}
In this case, we have $f(-1)<0$ and $f(1)=0$. If $f'(1)<0$, then $f({\color{black}t})>0$ for ${\color{black}t}<1$ and $1-{\color{black}t}\ll 1$. Thus there exists ${\color{black}t}_2\in (-1,1)$ such that $f({\color{black}t}_2)=0$. {\color{black} This implies $\cos\theta(0)\in \{{\color{black}t}_2,1\}$. If $\cos\theta(0)=1$}, we must have $\theta'(0)=0$ by {\color{black} the} boundary assumptions. By {\color{black} the Picard-Lindel\"{o}f existence and} uniqueness theorem for initial value problem{\color{black} s comprised} of nondegenerate second order ODE,  we conclude {\color{black}  that} $\theta(x)\equiv 0.$ {\color{black} On the other hand, if} $\cos\theta(0)={\color{black}t}_2$, then $\theta(0)\in (0,\pi)$, {\color{black} and so $\theta$ cannot}  be a constant solution due to {\color{black} the} boundary constraints{\color{black} .} Lemma \ref{lm:mono} {\color{black} then implies} $\theta(1)>\theta(0)$, thus $\cos\theta(1)\in  (-1,{\color{black}t}_2)$ is a third solution of $f({\color{black}t})=0$ in $[-1,1]$, a contradiction. If $f'(1)>0$, then there exists ${\color{black}t}_3\in (1,\frac{1}{\sqrt{1-\kappa}})$ such that $f({\color{black}t}_3)=0$. This implies that the only solution to \eqref{eqn:el}-\eqref{eqn:bdry} is $\theta(x)\equiv 0.$

\textbf{Case {\color{black} V}: $2\E_B-1<C<{\color{black}\max_{|{\color{black}t}|\leq 1}g({\color{black}t})}$}.
In this case, we have $f(-1)<0$, $f(1)<0$ and $f({\color{black}t}_0)>0$ for some ${\color{black}t}_0\in (-1,1)$. Thus there exists $-1<{\color{black}t}_2<{\color{black}t}_0<{\color{black}t}_3<1$ such that $f({\color{black}t}_2)=f({\color{black}t}_3)=0$. In addition, since $f({\color{black}t})>f(-{\color{black}t})$ for any ${\color{black}t}>0$, we have ${\color{black}t}_3>0$. This corresponds to the unique solution $\theta(x)$ to \eqref{eqn:el}-\eqref{eqn:bdry} with $\cos\theta(0)={\color{black}t}_3$ and $\cos\theta(1)={\color{black}t}_2$ and $\theta(0)\in (0,\frac{\pi}{2}).$

\textbf{Case {\color{black} VI}: $C\geq{\color{black}\max_{|{\color{black}t}|\leq 1}g({\color{black}t})}.$ }
If  $C>{\color{black}\max_{|{\color{black}t}|\leq 1}g({\color{black}t})}$ we have $f({\color{black}t})<0$ for ${\color{black}t}\in [-1,1]$, thus \eqref{eqn:el}-\eqref{eqn:bdry} does not admit a solution. 
If $C={\color{black}\max_{|{\color{black}t}|\leq 1}g({\color{black}t})},$ {\color{black} then  $f({\color{black}t})\leq 0$ for ${\color{black}t}\in [-1,1]$ and $f({\color{black}t^{\ast}})=0$ {\color{black}only} if $g({\color{black}t^{\ast}})=\max_{|{\color{black}t}|\leq 1}g({\color{black}t})$.} Since ${\color{black}g({\color{black}t})\geq g(-{\color{black}t})}$ for ${\color{black}t}>0$, and 
$$
{\color{black}g'({\color{black}t})=-\frac{2\kappa {\color{black}t}}{(1+(\kappa-1){\color{black}t}^2)^2}-2{\color{black}t}+2\E_B>0 }
$$
{\color{black}for ${\color{black}t}<0$, we must have ${\color{black}t}^\ast\geq 0$.  Since} 
$${\color{black}g}''({\color{black}t})=-2-\frac{ 2\W^2_S\kappa}{(1+(\kappa-1){\color{black}t}^2)^2}+\frac{8\kappa\W^2_S(\kappa-1){\color{black}t}^2}{(1+(\kappa-1){\color{black}t}^2)^3}<0$$ for {\color{black} all ${\color{black}t} $}, {\color{black}$t^\ast$ is unique.} Thus {\color{black} any} solution {\color{black} $\theta$} of \eqref{eqn:el}-\eqref{eqn:bdry} must satisfy $\cos \theta({\color{black}t})\equiv {\color{black}t}^{\ast}$, which is impossible due to {\color{black} the} boundary constraints. 
\end{proof}

{\color{black} For the remainder of this paper, unless stated otherwise, we will assume that $\W_S,L,\xi,\E_B,\kappa,$ and $C$ are all given such that \Cref{thm:existence} implies a unique nonconstant solution to BVP \eqref{eqn:el}-\eqref{eqn:bdry}.}

\subsection{Admissible endpoint configurations and polar director geometry}
We {\color{black} finish proving} Theorem \ref{thm:property} {\color{black} and Theorem \ref{thm:bdry}} in this section. {\color{black} Unless otherwise specified, $\theta\in C^\infty(0,1)$ will refer to a nonconstant solution of BVP \eqref{eqn:el}-\eqref{eqn:bdry}.}

We first obtain a symmetry result in the case of zero applied bias.

\begin{lemma}\label{lm:sym}
If $\E_B=0$, then $\cos \theta(x)=-\cos\theta(1-x)$ for any $x \in [0,1]$. In particular, $\cos\theta(1/2)=0$ and $\cos\theta(x)\neq 0$ for any $x\neq \frac{1}{2}$. If $\theta_1(x)$ and $\theta_2(x)$ are two nonconstant solutions of \eqref{eqn:el}-\eqref{eqn:bdry} satisfying \eqref{eqn:idbdry} with $C_1>C_2$, then $\cos^2\theta_1(x)>\cos^2\theta_2(x)$ for $x\in [0,1]\setminus\{\frac{1}{2}\}$.
\end{lemma}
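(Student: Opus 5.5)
The plan is to combine the reflection symmetry of \eqref{eqn:el}--\eqref{eqn:bdry} at $\E_B=0$ with the uniqueness statement of \Cref{thm:existence}, and then to compare two solutions through the first integral \eqref{eqn:fi}.

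\textbf{Symmetry.} When $\E_B=0$, set $\psi(x)=\pi-\theta(1-x)$. Replacing $\theta$ by $\pi-\theta$ fixes $\sin\theta$, $\sin^2\theta+\kappa\cos^2\theta$ and $(\theta')^2$, and changes the sign of $\sin2\theta$. Reflecting $x\mapsto 1-x$ fixes $\theta''$ and $(\theta')^2$. So $\psi''=\theta''(1-x)$, while the terms $\sin 2\psi\,(\psi')^2$ and $\sin2\psi$ are the negatives of the corresponding terms for $\theta$ at $1-x$. Hence $\psi$ satisfies \eqref{eqn:el} with $\E_B=0$ (the equation is just negated).

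For the boundary condition \eqref{eqn:bdry}, note that $\psi'(x)=\theta'(1-x)$ and $\sin\psi(x)=\sin\theta(1-x)$. The condition at $x=0$ for $\psi$ is therefore the condition at $x=1$ for $\theta$, and vice versa.

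Since $\cos^2$ is unchanged under $\theta\mapsto\pi-\theta$, the quantity in \eqref{eqn:idbdry} with $\E_B=0$ takes the same value $C$ for $\psi$ as for $\theta$. The solution $\psi$ is also nonconstant. By uniqueness in part (4) of \Cref{thm:existence}, $\psi\equiv\theta$, which gives $\cos\theta(x)=-\cos\theta(1-x)$. At $x=1/2$ this yields $\cos\theta(1/2)=0$. By \Cref{lm:mono}, $\theta$ is strictly increasing, and \Cref{rmk:trivsol} gives $\theta\in(0,\pi)$, so $\cos\theta$ is strictly decreasing. Hence $\cos\theta(x)\ne0$ for $x\neq 1/2$.

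\textbf{Comparison.} Let $u_i=\cos\theta_i$. By the symmetry just proved, each $u_i$ is odd about $x=1/2$, so it suffices to show $u_1>u_2>0$ on $[0,1/2)$. Both satisfy \eqref{eqn:fi} with $\E_B=0$ and constants $\tilde C_i=(L/\xi)^2C_i$, where $C_1>C_2$. Writing $\ell=L/\xi$, this gives
\[
(\sin^2\theta_i+\kappa\cos^2\theta_i)(\theta_i')^2=\tilde C_i+\ell^2\cos^2\theta_i .
\]
Here I use the inverse function of the monotone solution. On $[0,1/2]$ set $\theta_i\in[\theta_i(0),\pi/2]$ and $a(\theta)=\sin^2\theta+\kappa\cos^2\theta$. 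Then
\[
\frac12-x=\int_{\theta_i(x)}^{\pi/2}\sqrt{\frac{a(s)}{\tilde C_i+\ell^2\cos^2 s}}\,ds .
\]
For a fixed angle $\phi$, the integrand is strictly smaller for $i=1$, since $\tilde C_1>\tilde C_2$.

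Suppose $\theta_1(x_*)=\theta_2(x_*)=\phi<\pi/2$ for some $x_*\in[0,1/2)$. The two integrals from $\phi$ to $\pi/2$ would then coincide, which contradicts the strict inequality between the integrands. So the graphs of $\theta_1$ and $\theta_2$ do not cross on $[0,1/2)$.

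It remains to fix the ordering at one point, which I expect to be the main obstacle. Near $x=1/2$ both $\theta_i$ equal $\pi/2$ at $1/2$, with $\theta_i'(1/2)^2=\tilde C_i$. This requires $\tilde C_i>0$, which holds because part (4) of \Cref{thm:existence} gives $C_i>2\E_B-1=-1$, and a nonconstant solution has $C_i> 0$ when evaluated at $x=1/2$. So $\theta_1'(1/2)>\theta_2'(1/2)$, and just to the left of $1/2$ we get $\theta_1<\theta_2$, i.e.\ $u_1>u_2$.

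Alternatively, one can compare the integral representation directly: a given $x$ corresponds to a smaller $\theta_1(x)$. Combined with non-crossing, this gives $\cos^2\theta_1>\cos^2\theta_2$ on $[0,1/2)$, and the odd symmetry extends the inequality to $(1/2,1]$.
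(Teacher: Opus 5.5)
Your argument is correct, but both halves take a different route from the paper's.

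\textbf{Symmetry.} The paper does not invoke the uniqueness in part (4) of \Cref{thm:existence}. Instead it:
\begin{itemize}
\item rearranges \eqref{eqn:idbdry} at $\E_B=0$ into a product with a positive first factor, giving $\cos^2\theta(0)=\cos^2\theta(1)$;
\item uses \Cref{lm:mono} to get $\cos\theta(0)=-\cos\theta(1)$, and then $\theta'(0)=\theta'(1)$ from \eqref{eqn:bdry};
\item observes that $\psi(x)=\pi-\theta(1-x)$ has the same Cauchy data at $x=0$ as $\theta$, so Picard--Lindel\"of gives $\psi\equiv\theta$.
\end{itemize}
Your version checks that $\psi$ solves the BVP with the same $C$ and then applies part (4). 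It is shorter and equally valid, because that uniqueness is exactly this initial-value argument, already proved. The paper's version is self-contained and produces the boundary relations explicitly along the way.

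\textbf{Comparison.} The paper argues at crossing points. The first integral gives $\theta_i'=\frac{L}{\xi}\sqrt{(C_i+\cos^2\theta_i)/(1+(\kappa-1)\cos^2\theta_i)}$, so $\theta_1'>\theta_2'$ wherever $\theta_1=\theta_2$. A crossing at some $x_0<\tfrac12$ together with the one at $\tfrac12$ would then be two consecutive transversal crossings in the same direction, which is impossible.

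Your quadrature is a global form of the same monotonicity in $C$. Writing
$\tfrac12-x=F_i(\theta_i(x))$ with $F_i(\phi)=\int_\phi^{\pi/2}\sqrt{a(s)/(\tilde C_i+\ell^2\cos^2 s)}\,ds$,
you have $F_1<F_2$ pointwise. In your ``alternative'' form this yields $\theta_1<\theta_2$ on $[0,\tfrac12)$ in one step. It also makes the direction of the inequality explicit, which the paper leaves implicit in the fact $\theta_1'(\tfrac12)>\theta_2'(\tfrac12)$.

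\textbf{Two small slips to fix.}
\begin{itemize}
\item Replacing $\theta$ by $\pi-\theta$ also flips the sign of $\theta''$, so $\psi''(x)=-\theta''(1-x)$. With the sign you wrote, the leading term would not change sign and the equation would not simply be negated. Your conclusion is right, but the stated identity is not.
\item The bound $C_i>-1$ plays no role in showing $\tilde C_i>0$. The actual reason is $\tilde C_i=\theta_i'(\tfrac12)^2>0$ by \Cref{lm:mono}. This also gives $\tilde C_i+\ell^2\cos^2 s>0$ for every $s$, so all your integrals are well defined.
\end{itemize}
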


\begin{proof}
{\color{black} 
First, when $\E_B=0$, the constraint \eqref{eqn:idbdry} implies that \begin{equation}\label{eqn:idbdry-0E}
	\frac{\W_S^2(1-\cos^2\theta(0))}{1-(1-\kappa)\cos^2\theta(0)}-\cos^2\theta(0) = \frac{\W_S^2(1-\cos^2\theta(1))}{1-(1-\kappa)\cos^2\theta(1)} - \cos^2\theta(1).
\end{equation} Rearranging \eqref{eqn:idbdry-0E}, we find that \begin{equation*}
	\left[1+\frac{\kappa\W_S^2}{(1-(1-\kappa)\cos^2\theta(0))(1-(1-\kappa)\cos^2\theta(1))}\right](\cos^2\theta(1)-\cos^2\theta(0))=0.
\end{equation*} Since $\kappa,\W_S>0$, this implies $\cos^2\theta(0)=\cos^2\theta(1)$. Moreover, since $\theta'>0$ by \Cref{lm:mono}, it follows $\cos\theta(0)=-\cos\theta(1)$.} Plugging {\color{black} this} back into the boundary conditions \eqref{eqn:bdry}, we conclude that $\theta'(0)=\theta'(1)$. On the other hand, take $\psi(x)=\pi-\theta(1-x)${\color{black} . Recalling that $\tilde{E}_B=0$,} then $\psi(x)$ satisfies \eqref{eqn:el} with $\psi(0)=\theta(0)$ and $\psi'(0)=\theta'(1)=\theta'(0)$. By {\color{black} the Picard-Lindel\"of existence and} uniqueness theorem for the  initial value problem of \eqref{eqn:el} {\color{black} together with the initial conditions $\psi(0)=\theta(0)$, $\psi'(0)=\theta'(0)$}, we conclude  that $\theta(x)=\psi(x)$. Therefore $\cos\theta(x)=\cos\psi(x)=-\cos\theta(1-x)$. When $x=\frac{1}{2}$, this implies $\cos\theta(1/2)=0$. Since $\theta'>0$, we conclude that $\cos\theta(x)\neq 0$ for any $x\neq \frac{1}{2}$.

{\color{black} Next,} if $\theta_1(x)$ and $\theta_2(x)$ are two nonconstant solutions of \eqref{eqn:el}-\eqref{eqn:bdry} satisfying \eqref{eqn:idbdry} with $C_1>C_2$, then {\color{black} notice by the above that $\cos\theta_1(\frac{1}{2})= \cos\theta_2(\frac{1}{2})$}. If {\color{black} $\cos\theta_1=\cos\theta_2$} at another point $x_0$, then {\color{black} $\cos\theta_1$ and $\cos\theta_2$} also intersect {\color{black} at} $1-x_0$ by {\color{black} the} anti-symmetry of $\cos\theta_i(x)$. Without loss of generality, we assume $x_0<\frac{1}{2}$ and $\cos\theta_1(x)\neq\cos\theta_2(x)$ for $x\in (x_0,\frac{1}{2})$. Since 
$$
\theta_i'(x)=\frac{L}{\xi}\sqrt{\frac{C_i+\cos^2\theta_i(x)}{1+(\kappa-1)\cos^2\theta_i(x)}},
$$
we must have $\theta_1'(x_0)>\theta_2'(x_0)$ and $\theta_1'(\frac{1}{2})>\theta_2'(\frac{1}{2})$.  On the other hand, {\color{black} $\cos\theta_1(x)-\cos\theta_2(x)$ changes sign at $x=x_0$ and $x=1/2$, {\color{black} from which it follows}} 
$$
\left(\theta_1'(x_0)-\theta_2'(x_0)\right)\cdot \left(\theta_1'\left(\frac{1}{2}\right)-\theta_2'\left(\frac{1}{2}\right)\right)< 0,
$$
{\color{black} a} contradiction. 
\end{proof}

{\color{black} The following two results discuss the second derivative of a nontrivial polar director. We obtain a concavity result in the high bias regime.}

\begin{lemma}\label{lm:snd}
If $\E_B\geq  \max\left\{1,\W_S^{2}\frac{\sqrt{\kappa+2+\sqrt{\kappa^2+8\kappa}}}{2\sqrt{2}}\right\}$ and {\color{black} $p_x(0)>0$}, then $p_x''(x)<0$ for all  $x\in [0,1].$
\end{lemma}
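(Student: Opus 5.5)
The plan is to compute $p_x''$ directly in terms of $c:=\cos\theta$, eliminate $\theta''$ with \eqref{eqn:el}, eliminate $(\theta')^2$ with the first integral \eqref{eqn:fi}, and so reduce $p_x''<0$ to a pointwise algebraic inequality in $c$. Write $\lambda=L/\xi$, $s=\sin\theta$ and $D=\sin^2\theta+\kappa\cos^2\theta$. We have $p_x''=-s\theta''-c(\theta')^2$, and \eqref{eqn:el} gives
$$\theta''=\frac{\lambda^2 s(\E_B-c)-(1-\kappa)sc(\theta')^2}{D}.$$
Using the identity $D-(1-\kappa)s^2=\kappa$, we get
$$p_x''=-\frac{\lambda^2(1-c^2)(\E_B-c)+\kappa c(\theta')^2}{D}.$$
By \Cref{rmk:trivsol} a nonconstant solution has $\theta\in(0,\pi)$, so $1-c^2>0$. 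Since $\E_B\ge1>c$, the first term of the numerator is strictly positive. Wherever $c\ge 0$ the second term is nonnegative, so $p_x''<0$ there. This settles the region near $x=0$, where $p_x(0)>0$ and $p_x$ is decreasing because $\theta'>0$ by \Cref{lm:mono}.

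The remaining region is where $c<0$, which is an interval $(x^*,1]$ by monotonicity. There we need $\kappa|c|(\theta')^2<\lambda^2(1-c^2)(\E_B-c)$. Inserting \eqref{eqn:fi} in the form $D(\theta')^2=\lambda^2(C+c^2-2\E_Bc)$, the factor $\lambda^2$ cancels and the goal becomes
$$\kappa|c|\,(C+c^2-2\E_B c)<(1-c^2)(\E_B-c)\,D,\qquad c\in[p_x(1),0).$$
Next I control $C$ through the boundary relation \eqref{eqn:idbdry} at $x=1$:
$$C+c_1^2-2\E_Bc_1=\frac{\W_S^2(1-c_1^2)}{1-(1-\kappa)c_1^2},\qquad c_1=p_x(1).$$
Hence, for $c\in[c_1,0)$,
$$C+c^2-2\E_Bc=\frac{\W_S^2(1-c_1^2)}{1-(1-\kappa)c_1^2}-(c-c_1)(2\E_B-c-c_1).$$
The subtracted term is nonnegative because $c\ge c_1$ and $2\E_B\ge 2>c+c_1$. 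So it suffices to prove
$$\kappa|c|\,\W_S^2\,\frac{1-c_1^2}{1-(1-\kappa)c_1^2}\le (1-c^2)(\E_B-c)\,\bigl(1-(1-\kappa)c^2\bigr),$$
with strict inequality, for $c_1\le c<0$. The left side is increasing in $|c_1|^{-1}$-type behaviour via $c_1\mapsto (1-c_1^2)/(1-(1-\kappa)c_1^2)$, which is decreasing in $|c_1|$. Since $|c|\le|c_1|$, I would bound this factor by its value at $c_1=c$, namely $(1-c^2)/(1-(1-\kappa)c^2)$. After cancelling the common factor $1-c^2>0$ and bounding $\E_B-c\ge \E_B$, the inequality reduces to
$$\W_S^2\,\frac{\kappa|c|}{\bigl(1-(1-\kappa)c^2\bigr)^2}<\E_B,\qquad |c|<1.$$

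The main obstacle is this final one-variable maximization. I would maximize $h(u)=\kappa u/(1-(1-\kappa)u^2)^2$, or a closely related variant, over $u\in[0,1)$. I expect the resulting critical-point equation to be a quadratic in $u^2$ whose root produces the constant $\frac{\sqrt{\kappa+2+\sqrt{\kappa^2+8\kappa}}}{2\sqrt2}$ appearing in the hypothesis. If my crude step of replacing $\E_B-c$ by $\E_B$, or the cancellation of $1-c^2$, does not reproduce exactly this constant, I would instead keep the full expression $(1-c^2)(\E_B-c)D$ and optimize jointly. I would also handle the endpoint $x=1$ separately, using $\theta'(1)=\W_S\lambda s/D$ from \eqref{eqn:bdry} directly, since the formula above applies there with $c=c_1$.
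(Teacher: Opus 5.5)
Your computation of $p_x''$, the treatment of $\{\cos\theta\ge 0\}$, and the reduction on $\{\cos\theta<0\}$ via the first integral and the boundary condition at $x=1$ are all correct. Since $(1-z^2)(\E_B+z)(1-(1-\kappa)z^2)/(\kappa z)-z^2-2\E_B z$ is decreasing in $z=|\cos\theta|$, the condition $p_x''<0$ on $\{\cos\theta<0\}$ is equivalent to $p_x''(1)<0$. That is, it is equivalent to $\kappa|c_1|\W_S^2<(\E_B+|c_1|)\bigl(1-(1-\kappa)c_1^2\bigr)^2$. Your intermediate bounds are exact at $c=c_1$, except for $\E_B-c\ge\E_B$.

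The gap is the final step. Your function $h(u)=\kappa u/(1-(1-\kappa)u^2)^2$ has $h'(u)=\kappa\bigl(1+3(1-\kappa)u^2\bigr)/(1-(1-\kappa)u^2)^3>0$, so there is no interior critical point and $\sup_{[0,1)}h=1/\kappa$. Your argument therefore proves the lemma only under $\E_B\ge\W_S^2/\kappa$, whereas the constant in the hypothesis lies in $(\tfrac12,\tfrac{\sqrt3}{2})$.

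Keeping the full expression does not rescue this. For $|c_1|$ near $1$, the exact condition above becomes essentially $\W_S^2<\kappa(\E_B+1)$. This fails under the hypothesis, e.g.\ for $\kappa<\tfrac12$ and $\W_S^2=\E_B\frac{2\sqrt2}{\sqrt{\kappa+2+\sqrt{\kappa^2+8\kappa}}}>\E_B$. So you must show that $|\cos\theta|$ stays away from $1$ where it matters. The information you use cannot do this: it is the first integral plus the boundary condition at $x=1$, with $x=0$ entering only through the sign of $p_x(0)$.

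The missing ingredient is the boundary relation at $x=0$, i.e.\ $g(p_x(0))=g(p_x(1))=C$. With $\E_B\ge 1$ it gives $\sin\theta(0)<\sin\theta(1)$, hence $p_x(0)>|p_x(1)|$. It also gives the lower bound $C>2\E_B p_x(0)-p_x(0)^2$.

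The paper argues by contradiction at the first zero $x_1$ of $p_x''$. There $\kappa z(C+z^2+2\E_B z)=(1-z^2)(\E_B+z)(1-(1-\kappa)z^2)$, with $z=-\cos\theta(x_1)<p_x(0)<1\le\E_B$. The lower bound on $C$ turns this into $(1-(1-\kappa)z^2)(1-z^2)>2\kappa z^2$, i.e.\ $z<z_1:=\sqrt{y_1}$ with $y_1=\frac{\kappa+2-\sqrt{\kappa^2+8\kappa}}{2(1-\kappa)}$.

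Only then does the paper use the monotonicity of $(\sin^2\theta+\kappa\cos^2\theta)(\theta')^2$, which is your step of discarding $(c-c_1)(2\E_B-c-c_1)$, to carry this to $x=1$. This yields $\W_S^2>\E_B(1-z_1^2)(1-(1-\kappa)z_1^2)/(\kappa z_1)=2z_1\E_B$, and $1/(2z_1)$ is exactly $\frac{\sqrt{\kappa+2+\sqrt{\kappa^2+8\kappa}}}{2\sqrt2}$. The quadratic in $z^2$ you anticipated is thus $(1-(1-\kappa)y)(1-y)=2\kappa y$; it comes from the $x=0$ data, not from maximizing $h$.

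Your pointwise framework can be completed the same way, but not without that input:
\begin{enumerate}
\item if the inequality fails at $c_1$, pick $z^*\in(0,|c_1|]$ where it becomes an equality;
\item bound $z^*<z_1$ using $C>2\E_Bp_x(0)-p_x(0)^2$ and $z^*<p_x(0)$;
\item evaluate at $x=1$ to contradict $\W_S^2\le 2z_1\E_B$.
\end{enumerate}
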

\begin{proof}
We first calculate $$p_x'=-\sin\theta(x)\theta'(x)$$ and $$p_x''=-\sin\theta(x)\theta''(x)-\cos\theta(x)(\theta'(x))^2.$$ Applying equation \eqref{eqn:el}, we get 
\begin{equation}\label{eqn:px}
p_x''=\frac{-\kappa\cos\theta(x)(\theta'(x))^2+\left(\frac{L}{\xi}\right)^2\sin^2\theta(x)(\cos\theta(x)-\E_B)}{\sin^2\theta(x)+\kappa\cos^2\theta(x)}.
\end{equation}
Since $\E_B\geq 1$, we conclude that $p_x''(x)<0$ whenever $\theta(x)\in (0,\frac{\pi}{2})$. {\color{black} By contradiction,} assume there exists $x_1 \in (0,1)$ such that $p_x''(x_1)=0$ and $p_x{\color{black}''}(x)<0$ for all $x<x_1.$ {\color{black} Then} $\theta(x_1)>\frac{\pi}{2}$ and, {\color{black} rearranging \eqref{eqn:px}},
\begin{equation}\label{eqn:x1}
(\theta'(x_1))^2=\frac{1}{\kappa}\left(\frac{L}{\xi}\right)^2\sin^2\theta(x_1)\left(1-\E_B\sec\theta(x_1)\right).
\end{equation}
On the other hand, recall the first integral \eqref{eqn:fi} gives
\begin{equation}\label{eqn:id}
(\sin^2\theta(x)+\kappa \cos^2\theta(x))(\theta'(x))^2-\left(\frac{L}{\xi}\right)^2(\cos^2\theta(x)-2\E_B\cos\theta(x))=\tilde{C}
\end{equation}
{\color{black} for some $\tilde{C}\in \R$.} Apply{\color{black} ing} \eqref{eqn:id} at {\color{black} the} points $x_1,0,1$ and {\color{black} taking} into account the boundary conditions \eqref{eqn:bdry} and \eqref{eqn:x1}, we get 
\begin{eqnarray}\label{eqn:three}
\tilde{C}\left(\frac{\xi}{L}\right)^2&=&\frac{\W^2_S}{(1-\kappa)+\frac{\kappa} {\sin^2\theta(0)}}-(\cos^2\theta(0)-2\E_B\cos\theta(0))\\ \nonumber 
&=&\frac{\W^2_S}{(1-\kappa)+\frac{\kappa} {\sin^2\theta(1)}}-(\cos^2\theta(1)-2\E_B\cos\theta(1))\\ \nonumber 
&=&\frac{1}{\kappa}(1+(\kappa-1)z^2)(1-z^2)\left(1+\frac{\E_B}{z}\right)-(z^2+2\E_Bz), 
\end{eqnarray}
where  $z=-\cos\theta(x_1).$
By Lemma \ref{lm:mono}, we have $\theta(1)>\theta(x_1)>\frac{\pi}{2}${\color{black} .} {\color{black}Since $\E_B\geq 1$, we conclude from \eqref{eqn:three} that }
\begin{eqnarray*}
&&\frac{\W^2_S}{(1-\kappa)+\frac{\kappa} {\sin^2\theta(0)}}\\
&=&\frac{\W^2_S}{(1-\kappa)+\frac{\kappa} {\sin^2\theta(1)}}+\left[\cos\theta(0)-\cos\theta(1)\right]\left[\cos\theta(0)+\cos\theta(1)-2\E_B\right]\\
&<&\frac{\W^2_S}{(1-\kappa)+\frac{\kappa} {\sin^2\theta(1)}}.
\end{eqnarray*}
From this, we conclude that $\sin\theta(0)<\sin\theta(1)$, which implies $\cos\theta(0)>-\cos\theta(1)>-\cos\theta(x_1)=z$. Plugging back into \eqref{eqn:three}, we get {\color{black} the} inequality
\begin{eqnarray}
&&\frac{1}{\kappa}(1+(\kappa-1)z^2)(1-z^2)\left(1+\frac{\E_B}{z}\right) \nonumber\\
&>&z^2+2\E_Bz-\cos^2\theta(0)+2\E_B\cos\theta(0) \nonumber\\
&=&(z+\cos\theta(0))(z-\cos\theta(0)+2\E_B)>2z(\E_B+z), \label{eqn:four}
\end{eqnarray} 
where we used $\E_B \geq 1$ and $0<z<\cos\theta(0)<1$ in the last inequality. Since $\E_B+z>0$, we obtain a quadratic inequality for $y=z^2${\color{black} :}
\begin{equation}\label{eqn:y}
(1+(\kappa-1)y)(1-y)>2\kappa y.
\end{equation}
{\color{black} Inequality} {\color{black} \eqref{eqn:y} can be solved explicitly by 
\begin{equation}\label{eqn:z1}
0<y<y_1=\frac{\kappa+2-\sqrt{\kappa^2+8\kappa}}{2(1-\kappa)},
\end{equation}
which  implies $0<-\cos\theta(x_1)=z<\sqrt{y_1}=z_1<1$.  Since $\E_B\geq 1$, we conclude from equation \eqref{eqn:el} and Lemma \ref{lm:mono} that 
\begin{equation}\label{eqn:inc}
\left[(\sin^2\theta(x)+\kappa\cos^2\theta(x))(\theta'(x))^2\right]'=2\left(\frac{L}{\xi}\right)^2\sin\theta(x)(\E_B-\cos\theta(x)) {\color{black} \theta^\prime(x)} \geq 0
\end{equation}
{\color{black} for all $x\in (0,1)$.}
 Combining \eqref{eqn:bdry}, \eqref{eqn:x1} and  \eqref{eqn:inc}, we have 
$$
\frac{\W^2_S\sin^2\theta(1)}{\sin^2\theta(1)+\kappa\cos^2\theta(1)} \ge \frac{\sin^2\theta(x_1)+\kappa\cos^2\theta(x_1)}{\kappa}\sin^2\theta(x_1)(1-\E_B\sec\theta(x_1)).
$$

{\color{black} Using \eqref{eqn:z1}}, we can bound $\W^2_S$ from below by 
\begin{eqnarray}\label{eqn:Ebd}
\W^2_S&{\color{black} \ge}&\left((1-\kappa)+\frac{\kappa}{\sin^2\theta(1)}\right)\frac{(1-\kappa)\sin^2\theta(x_1)+\kappa}{\kappa}\sin^2\theta(x_1)({1-\E_B\sec\theta(x_1)})\\ \nonumber
&{\color{black} >}& \left((1-\kappa)+\frac{\kappa}{\sin^2\theta(1)}\right)\frac{1+(\kappa-1)z_1^2}{\kappa}(1-z_1^2)\left({1+\frac{\E_B}{z_1}}\right)\\ \nonumber 
&{\color{black} >}&
{\color{black}  \left(\frac{\sqrt{\kappa^2+8\kappa}-\kappa}{2\kappa}\right) 
\left(\frac{\sqrt{\kappa^2+8\kappa}-3\kappa}{2(1-\kappa)}\right)\left(\E_B\sqrt{2}\sqrt{\frac{1-\kappa}{\kappa+2-\sqrt{\kappa^2+8\kappa}}}\right)} \nonumber \\
&=& {\color{black} \E_B\sqrt{2}\sqrt{\frac{\kappa+2-\sqrt{\kappa^2+8\kappa}}{1-\kappa}}, \nonumber
}
\end{eqnarray}
{\color{black} where we have also used the estimate $1-\kappa + \frac{\kappa}{\sin^2\theta(1)}\ge 1$.} The conclusion follows from \eqref{eqn:Ebd}.}
\end{proof}

The following lemma asserts a unique inflection point in the case of high surface anchoring.

\begin{lemma}\label{lm:inflpt}
If $p_x(0)\ge \E_B$ and $p_x(1)\le 0$, {\color{black} then} there exists a unique $x_0\in(0,1)$ such that $\theta''(x_0)=0${\color{black} . Moreover,} $\theta''(x)>0$ for $x\in (x_0,1)$ and $\theta''(x)<0$ for $x\in (0,x_0)$.
\end{lemma}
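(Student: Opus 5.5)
The plan is to read off the sign of $\theta''$ from the Euler--Lagrange equation \eqref{eqn:el}, show that it changes from negative to positive between the endpoints, and show that every interior zero is a strict upward crossing, which gives uniqueness. Write $c(x)=\cos\theta(x)$ and $s(x)=\sin\theta(x)$. Rewriting \eqref{eqn:el},
\begin{equation*}
(s^2+\kappa c^2)\,\theta''=s\,h(x),\qquad h(x):=-(1-\kappa)\,c\,(\theta')^2-\left(\frac{L}{\xi}\right)^2(c-\E_B).
\end{equation*}
By \Cref{rmk:trivsol}, a nonconstant solution has $\theta\in(0,\pi)$, so $s>0$ on $[0,1]$. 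Since also $s^2+\kappa c^2>0$, the sign of $\theta''$ equals the sign of $h$. The lemma therefore reduces to showing that $h$ has exactly one zero $x_0\in(0,1)$, with $h<0$ on $(0,x_0)$ and $h>0$ on $(x_0,1)$.

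\emph{Endpoint signs.} By \Cref{lm:mono}, $\theta'>0$ and $c$ is strictly decreasing, so $c(1)<c(0)$.
\begin{itemize}
\item At $x=1$: since $c(1)\le 0\le \E_B$, both terms of $h(1)$ are nonnegative. The second term vanishes only if $c(1)=\E_B=0$. In that case the hypothesis $c(0)\ge\E_B$ and monotonicity alone do not give a contradiction, but for $\E_B=0$ \Cref{lm:sym} gives $c(1)=-c(0)$ with $c(0)>c(1)$, so $c(1)<0$. Hence $h(1)>0$.
\item At $x=0$: since $c(0)\ge\E_B\ge0$, both terms of $h(0)$ are nonpositive. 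Strictness needs a short case check. If $c(0)>\E_B$, the second term is strictly negative. If $c(0)=\E_B>0$, the first term is strictly negative, because $\theta'(0)>0$ by \eqref{eqn:bdry}. If $c(0)=\E_B=0$, the symmetry in \Cref{lm:sym} would force $c(1)=0=c(0)$, contradicting strict monotonicity. Hence $h(0)<0$.
\end{itemize}
Since $h$ is continuous (indeed smooth, by \Cref{lm:reg} and the endpoint regularity from the ODE), the intermediate value theorem gives some $x_0\in(0,1)$ with $h(x_0)=0$, that is, $\theta''(x_0)=0$.

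\emph{Uniqueness via transversality.} Differentiate $h$:
\begin{equation*}
h'=(1-\kappa)\,s\,(\theta')^3-2(1-\kappa)\,c\,\theta'\theta''+\left(\frac{L}{\xi}\right)^2 s\,\theta'.
\end{equation*}
At any zero $\bar x$ of $h$ we have $\theta''(\bar x)=0$, so the middle term drops out and
\begin{equation*}
h'(\bar x)=s\,\theta'\left[(1-\kappa)(\theta')^2+\left(\frac{L}{\xi}\right)^2\right]>0,
\end{equation*}
using $s>0$, $\theta'>0$ and $\kappa<1$. So every zero of $h$ is a strict upward crossing. Two consecutive upward crossings would need a downward crossing or a tangency between them, and neither can occur. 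Hence $h$ has exactly one zero $x_0$, with $h<0$ on $(0,x_0)$ and $h>0$ on $(x_0,1)$, which gives the stated signs of $\theta''$.

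The step needing the most care is the strict endpoint inequality $h(0)<0$ (and $h(1)>0$) in the degenerate cases $c(0)=\E_B$ or $\E_B=0$. These are handled by $\theta'(0)>0$ from the boundary condition \eqref{eqn:bdry} and by the antisymmetry in \Cref{lm:sym}. The transversality computation itself is routine.
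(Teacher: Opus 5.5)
Your proof is correct, but it gets uniqueness by a different mechanism from the paper.

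Both arguments start from the fact that $\theta''$ has the sign of $h=-(1-\kappa)\cos\theta\,(\theta')^2-(L/\xi)^2(\cos\theta-\E_B)$. The paper then uses the first integral \eqref{eqn:fi} to eliminate $(\theta')^2$. This rewrites $h$ as $-(L/\xi)^2v(\cos\theta)/(1+(\kappa-1)\cos^2\theta)$ for an explicit function $v$ of $\cos\theta$ alone. The paper shows $v'>0$ on $(\cos\theta(1),\cos\theta(0))$, so the sign function is globally monotone in $x$, since $\cos\theta$ is strictly decreasing. You never use the first integral. You only need $h'>0$ at the zeros of $h$, which holds because the indefinite term $-2(1-\kappa)\cos\theta\,\theta'\theta''$ vanishes exactly there.

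What each route buys:
\begin{itemize}
\item Your argument is shorter and purely local, so it would survive perturbations of \eqref{eqn:el} that destroy integrability.
\item The paper's argument describes the inflection point globally, as the unique level $\cos\theta=s^\ast$ with $v(s^\ast)=0$. For $\E_B>0$, the inequalities $v(0)<0<v(\E_B)$ place it where $0<p_x<\E_B$. You can also read this off from $h(x_0)=0$, which gives $\cos\theta(x_0)=(L/\xi)^2\E_B/\bigl[(1-\kappa)\theta'(x_0)^2+(L/\xi)^2\bigr]$.
\end{itemize}

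Your explicit handling of the degenerate endpoint cases fills a spot the paper passes over. In the paper's notation $v(0)=-\E_B$ and $v(\E_B)=(1-\kappa)C\E_B$. So its chain $v(\cos\theta(1))\le v(0)<0<v(\E_B)\le v(\cos\theta(0))$ is not strict when $\E_B=0$, and its opening claim $\theta''(0)<0$ also needs $\cos\theta(0)>0$ in that case. Both are closed by exactly your appeal to \Cref{lm:sym}, which gives $\cos\theta(1)<0<\cos\theta(0)$.
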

\begin{proof}
Since $\cos\theta(0)\ge \E_B$ and $\cos\theta(1)\le 0$, \eqref{eqn:el} {\color{black} and \eqref{eqn:bdry} yield} $\theta''(1)\ge 0$ and $\theta''(0)< 0$. On the other hand, the  first integral \eqref{eqn:fi} implies
\begin{equation*}
(\sin^2\theta+\kappa\cos^2\theta)(\theta')^2-\left(\frac{L}{\xi}\right)^2(\cos\theta-\E_B)^2=C\left(\frac{L}{\xi}\right)^2.
\end{equation*}
for some constant $C$.
Substitut{\color{black} ing} back into \eqref{eqn:el} {\color{black} and multiplying by $(\sin^2\theta+ \kappa\cos^2\theta)$}, we get 
$$
0=(\sin^2\theta+\kappa\cos^2\theta)^2\theta''+\left(\frac{L}{\xi}\right)^2\sin\theta\cdot  v(\cos\theta),
$$
where {\color{black} 
$$
v(s)=(1-\kappa)s((s-\E_B)^2+C)+(s-\E_B)(1+(\kappa-1)s^2).
$$ Therefore $\theta''$ share{\color{black} s} the same sign as $-v(\cos\theta)$. We calculate
\begin{equation*}\label{eqn:v'}
v'(s)=(1-\kappa)((s-\E_B)^2+C)+1+(\kappa-1)s^2 > 0 \text{ for } s\in ({\color{black}\cos\theta(1),\cos\theta(0)}),
\end{equation*} and recall from \Cref{lm:mono} that $\theta'>0$.} Since $v(\cos\theta(1))\le v(0)<0<v(\tilde{E}_B)\le v(\cos\theta(0))$, we conclude {\color{black} from the intermediate value theorem} that  there  exists  a unique $x_0\in (0,1)$ such that $v(\cos\theta(x_0))=0${\color{black} . Moreover, it follows by monotonicity that}  $v(\cos\theta(x))>0$ for $x\in (0,x_0)$ and $v(\cos\theta(x))<0$ for $x\in (x_0,1)$. Correspondingly, we have $\theta''(x)<0$ for $x\in(0,x_0)$ and $\theta''(x)>0$ for $x\in (x_0,1).$
\end{proof}

This completes the proof of \Cref{thm:property}.

{\color{black} To further characterize nontrivial solutions of BVP \eqref{eqn:el}-\eqref{eqn:bdry}, we analyze the relationship between {\color{black} the anchoring strength, bias field, cell thickness, and bend-to-splay ratio} and the two endpoint values $p_x(0)=\cos\theta(0)$ and $p_x(1)=\cos\theta(1)$. {\color{black} More precisely,} the following result {\color{black} presents necessary conditions on {\color{black} $\W_S$} for given $\kappa, \frac{L}{\xi}$ and bias $\E_B$ for} different endpoint configurations. The proof relies on comparing the first integral \eqref{eqn:fi} to the nonlinear boundary conditions \eqref{eqn:bdry} to yield an algebraic compatibility condition for the boundary values.}

\begin{lemma}\label{lm:bvnec}
	Let $\E_B<1$, {\color{black} $\kappa\in (0,1)$, and $\frac{L}{\xi}\ge 1$.} We have the following {\color{black} necessary conditions on the surface anchoring strength $\W_S$ depending on  the boundary values $p_x(0),p_x(1)$}: \begin{enumerate}
		\item If $p_x(0) > \E_B$ and $p_x(1) < 0$, then $\W_S>\max\{\E_B\frac{\xi}{L},\sqrt{2\E_B-1}\}$ {\color{black} if $\E_B \geq \frac{1}{2}$ and $\W_S >\E_B\frac{\xi}{L}$ if $0\leq \E_B<\frac{1}{2}$.}
		\item If $p_x(0) > \E_B$ and $p_x(1) > 0$, then $\tilde{W}_S < \min\left\{\sqrt{\frac{1-(1-\kappa)\E_B^2}{\kappa}}, \sqrt{\E_B^2 + \frac{1+\kappa}{8}(\frac{\pi\xi}{L})^2}\right\}$.
		\item If $p_x(0) < \E_B$ and $p_x(1) < 0$, then $\sqrt{\frac{1-(1-\kappa)\E_B^2}{\kappa}}< \W_S< \frac{\pi\xi}{L}\sqrt{\frac{(1+\kappa)-(1-\kappa^2)\E_B^2}{2(1-\E_B^2)}}$.
		\item If $p_x(0) < \E_B$ and $p_x(1) > 0$, then $\W_S< \min\left\{\sqrt{\E_B^2 + \frac{1+\kappa}{8}\left(\frac{\pi\xi}{L}\right)^2}, \frac{\pi\xi}{L}\sqrt{\frac{1+\kappa}{8}\left(\frac{1-(1-\kappa)\E_B^2}{1-\E_B^2}\right)}\right\}$.
	\end{enumerate}
\end{lemma}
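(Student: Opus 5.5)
The plan is to reduce everything to an algebraic relation between the two endpoint values $t_0=p_x(0)=\cos\theta(0)$ and $t_1=p_x(1)=\cos\theta(1)$, and then to supplement it with integral estimates that bring in the cell thickness. By \Cref{lm:mono}, $\theta'>0$, so $t_0>t_1$. Write $D(t)=1-(1-\kappa)t^2\in[\kappa,1]$ and $D_i=D(t_i)$. The boundary condition \eqref{eqn:bdry} gives $\theta'=\W_S\frac{L}{\xi}\sin\theta/(\sin^2\theta+\kappa\cos^2\theta)$ at $x=0,1$. Substituting this into the first integral \eqref{eqn:fi} gives $g(t_0)=g(t_1)$, exactly as in the proof of \Cref{thm:existence}. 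Using $\frac{1-t_0^2}{D_0}-\frac{1-t_1^2}{D_1}=\frac{\kappa(t_1^2-t_0^2)}{D_0D_1}$ and dividing by $t_0-t_1>0$, I expect the clean identity
\begin{equation*}
t_0+t_1=\frac{2\E_B}{1+\kappa\W_S^2/(D_0D_1)} .
\end{equation*}
This shows $0\le t_0+t_1<2\E_B$, and it is the main tool for the purely algebraic bounds.

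\textbf{Algebraic bounds.} In case 2, $t_1>0$ and $t_0>\E_B$ give $t_0+t_1>\E_B$. Hence $\kappa\W_S^2<D_0D_1$, and since $D_0<1-(1-\kappa)\E_B^2$ and $D_1\le 1$, this yields $\W_S^2<\frac{1-(1-\kappa)\E_B^2}{\kappa}$. In case 3, $t_0<\E_B$ and $t_1<0$ give $t_0+t_1<\E_B$, so the inequality is reversed. I would then need a lower bound on $D_0D_1$: here $0<-t_1<t_0<\E_B$, so both $D_i>1-(1-\kappa)\E_B^2$. I have not yet checked whether this suffices to reach the stated constant $\frac{1-(1-\kappa)\E_B^2}{\kappa}$ or whether a finer use of the identity is needed; this is a point I would verify first.

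In case 1, the bound $\W_S>\sqrt{2\E_B-1}$ should follow from the same identity (or directly from $g(t_0)=g(t_1)$) combined with $t_0>\E_B$, $t_1<0$ and $t_0\le 1$. For $\W_S>\E_B\frac{\xi}{L}$, I would evaluate \eqref{eqn:el} at $x=0$ using \eqref{eqn:bdry}. Alternatively, I would use that $Q:=(\sin^2\theta+\kappa\cos^2\theta)(\theta')^2$ satisfies $Q'=2(\frac{L}{\xi})^2\sin\theta(\E_B-\cos\theta)\theta'$. Thus $Q$ decreases while $\cos\theta>\E_B$ and increases afterwards, and comparing the boundary values of $Q$ with this monotonicity should give the bound.

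\textbf{Bounds involving $\pi\xi/L$.} These are the bounds in cases 2, 3 and 4. They come from the constraint $\theta(1)-\theta(0)=\int_0^1\theta'\,dx<\pi$ together with a pointwise lower bound on $\theta'$. The first integral written as $Q=(\frac{L}{\xi})^2\bigl((\cos\theta-\E_B)^2+C\bigr)$ gives $Q\ge(\frac{L}{\xi})^2 C$. Evaluating $C$ at a boundary point via \eqref{eqn:bdry} and using $\sin^2\theta+\kappa\cos^2\theta\le 1$, we get $\theta'\ge\frac{L}{\xi}\sqrt{C}$, hence $\sqrt{C}<\pi\xi/L$. Then $C$ is expressed through $\W_S$ and the sign information on $t_0,t_1$, using in particular that $t_0+t_1$ is bounded as above. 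This should produce the constants $\frac{1+\kappa}{8}$ and $\frac{1+\kappa}{2}$, which presumably come from averaging $D(t)$ over the endpoints.

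I expect this last step to be the main obstacle: matching the exact constants in 2--4 requires choosing the right endpoint, and the right elementary inequality for $\frac{(1-t^2)}{D(t)}$, in each case. I would first try bounding $(1-t_i^2)/D_i$ below using $|t_i|\le\E_B$ or $|t_i|\le 1$ as the case dictates, then solve the resulting inequality for $\W_S$.
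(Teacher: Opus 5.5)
There is a genuine gap, in two places. Several steps do work.
\begin{itemize}
\item Your identity $t_0+t_1=2\E_B/(1+\kappa\W_S^2/(D_0D_1))$ is correct, and it gives the first bound in case~2.
\item In case~1, $\W_S^2>2\E_B-1$ follows from $g(t_0)=g(t_1)$, using $\frac{1-t_1^2}{D_1}\le 1$ and $-t^2+2t\E_B\ge 2\E_B-1$ on $[\E_B,1]$.
\item Your $Q$-route gives $\W_S>\E_B\xi/L$ once you note $(\sin\theta\,\theta')^2\le Q\le\max\{Q(0),Q(1)\}<\W_S^2(L/\xi)^2$. Then $\E_B<p_x(0)-p_x(1)=\int_0^1|p_x'|\,dx<\W_S L/\xi$, a valid alternative to the paper's integral estimate.
\end{itemize}

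The first gap is the lower bound in case~3. Your bound $D_0D_1>(1-(1-\kappa)\E_B^2)^2$ only yields $\kappa\W_S^2>D(\E_B)^2$. This is strictly weaker than the claimed $\kappa\W_S^2>D(\E_B)$ when $0<\E_B<1$, and no bound on $D_0D_1$ through $|t_i|<\E_B$ alone closes it. The missing idea is to compare $C:=g(t_0)=g(t_1)$ with the fixed values $g(0)=\W_S^2$ and $g(\E_B)$. Since $g''<0$ on $[-1,1]$, $g'>0$ on $(-1,0)$ and $g'<0$ on $(\E_B,1)$, $g$ has a unique maximizer $t^*\in[0,\E_B]$, and $t_1<t^*<t_0$. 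In case~3 this gives $g(\E_B)<C<g(0)$. Your own difference formula applied to the pair $(\E_B,0)$ gives $g(\E_B)-g(0)=\E_B^2\bigl(1-\kappa\W_S^2/D(\E_B)\bigr)$, which is exactly the claimed bound. The same comparisons convert an upper bound on $C$ into the upper bounds on $\W_S$: $\W_S^2<C$ in case~2, $g(\E_B)<C$ in case~3, and $\max\{g(0),g(\E_B)\}<C$ in case~4. Your sketch leaves that step unspecified.

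The second gap is the thickness bounds, which fail with the stated constants. Write $c:=C-\E_B^2$; this is your constant in $Q=(L/\xi)^2((\cos\theta-\E_B)^2+c)$. Discarding the weight via $\sin^2\theta+\kappa\cos^2\theta\le 1$ to get a pointwise lower bound on $\theta'$ gives at best $c<(\pi\xi/L)^2$. In cases~2 and~4, where $\theta(0),\theta(1)\in(0,\pi/2)$, it gives $c<\frac14(\pi\xi/L)^2$. Since $\kappa<1$, these are strictly weaker than the needed $\frac{1+\kappa}{2}(\pi\xi/L)^2$ and $\frac{1+\kappa}{8}(\pi\xi/L)^2$. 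No choice of endpoint repairs this, because $\frac{1+\kappa}{2}$ is not an endpoint average of $D$; it is the mean of $\sin^2\theta+\kappa\cos^2\theta$ over $\theta$. You must keep the weight and change variables in the first integral:
\[
\frac{L}{\xi}=\int_{\theta(0)}^{\theta(1)}\sqrt{\frac{\sin^2\theta+\kappa\cos^2\theta}{(\cos\theta-\E_B)^2+c}}\,d\theta\le\frac{1}{\sqrt{c}}\int_{\theta(0)}^{\theta(1)}\sqrt{\sin^2\theta+\kappa\cos^2\theta}\,d\theta .
\]
Then apply Cauchy--Schwarz on $(0,\pi)$ in case~3, or on $(0,\pi/2)$ in cases~2 and~4. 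This bounds the last integral by $\pi\sqrt{(1+\kappa)/2}$ and $\pi\sqrt{(1+\kappa)/8}$ respectively, which produces exactly the constants in the statement.
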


\begin{proof}
	{\color{black}Rearranging \eqref{eqn:fi}, we get  \begin{equation*}
			\frac{L}{\xi} = \sqrt{\frac{1-(1-\kappa)\cos^2\theta}{C+\cos^2\theta-2\cos\theta\E_B}}\theta',
		\end{equation*} and then integrating from $x=0$ to $x=1$ gives \begin{equation}\label{eqn:fiint}
			\frac{L}{\xi} = \int_{\theta(0)}^{\theta(1)} \sqrt{\frac{1-(1-\kappa)\cos^2\theta}{C+\cos^2\theta-2\cos\theta\E_B}} d\theta.
		\end{equation} 
	where, {\color{black} recalling the proof of \Cref{thm:existence},} $\cos\theta(0)$ and $\cos\theta(1)$ are solutions of \begin{equation}\label{eqn:alg2}
			g(t)=\W_S^2\left(\frac{1-t^2}{1-(1-\kappa)t^2}\right) - t^2+2t\E_B= C.
		\end{equation} 
		
	 We calculate \begin{equation*}
		g'(t) = 2\left[-\kappa\W_S^2\frac{{\color{black}t}}{(1-(1-\kappa)t^2)^2}- ({\color{black}t}-\E_B)\right]
	\end{equation*} and \begin{equation*}
		g''({\color{black}t})= 2\left[-\kappa\W_S^2\left(\frac{1+3(1-\kappa){\color{black}t}^2}{(1-(1-\kappa)t^2)^3}\right)-1\right].
	\end{equation*} Thus $g''({\color{black}t})<0$ for all $t\in [-1,1]$. Noting that $g'({\color{black}t})>0$ on $(-1,0)$ and $g'({\color{black}t})<0$ on $(\E_B,1)$, it follows that $g$ has a unique local maximum point at ${\color{black}t}^\ast\in [0,\E_B]$ and \begin{equation}\label{eqn:minF}
		\min_{t\in [0,\E_B]} g(t) = \min\{g(0),g(\E_B)\}= \min\left\{\W_S^2, \W_S^2\left(\frac{1-\E_B^2}{1-(1-\kappa)\E_B^2}\right)+\E_B^2\right\}.
	\end{equation} We proceed by cases of different boundary values.}
	
	\textbf{Case I:} {\it$p_x(0)>\E_B$ and $p_x(1)<0$. }To satisfy $g(p_x(0))= g(p_x(1))=C$ and \eqref{eqn:minF}, we must have \begin{equation}\label{eqn:bvc1}
		C<{\color{black} \min\left\{\W_S^2, \W_S^2\left(\frac{1-\E_B^2}{1-(1-\kappa)\E_B^2}\right)+\E_B^2\right\}}.
	\end{equation} By the change of variables $y=\cos\theta$ in \eqref{eqn:fiint}, we find that \begin{equation}\label{eqn:Ws1c1}
		\frac{L}{\xi} = \int_{\cos\theta(0)}^{\cos\theta(1)}- \sqrt{\frac{1+\frac{\kappa y^2}{1-y^2}}{C+\cos^2\theta-2\cos\theta\E_B}} dy > \int_0^{\E_B} \sqrt{\frac{1}{\W_S^2}} dy= \frac{\E_B}{\W_S},
	\end{equation} where we have used {\color{black} the assumptions $p_x(0)>\E_B$,  $p_x(1)<0$ and the estimates $\cos^2\theta-2\cos\theta\E_B<0$ for $\cos\theta\in(0,\E_B)$ and  $C< \W_S^2$ }in the last inequality. On the other hand, since $C<{\color{black} g}(0)$ and ${\color{black} g}$ is decreasing on $(p_x(0),1)$, it follows that \begin{equation*}
		{\color{black}-1+2\E_B=g(1)<g(p_x(0))=C<g(0) = \W_S^2}
	\end{equation*} that is, $\W_S^2 > 2\E_B-1$. This and \eqref{eqn:Ws1c1} together prove assertion (1).
	
	\textbf{Case II:} {\it$p_x(0)>\E_B$ and $p_x(1)>0$.} In this case, we must have \begin{equation*}
		g(0) < C < g(\E_B),
	\end{equation*} that is, \begin{equation}\label{eqn:bvc2}
		\W_S^2< C < \W_S^2\left(\frac{1-\E_B^2}{1-(1-\kappa)\E_B^2}\right)+\E_B^2.
	\end{equation} Solving this for $\W_S$ shows \begin{equation}\label{eqn:Ws1c2}
		\W_S^2 < \frac{1-(1-\kappa)\E_B^2}{\kappa}.
	\end{equation} Next, assume for the moment that $C>{\color{black}\E_B^2}$. Since $\theta(0),\theta(1)\in (0,\pi/2)$ and $C>g(0)$, we see by \eqref{eqn:fiint} \begin{align*}
		\frac{L}{\xi} &= \int_{\theta(0)}^{\theta(1)} \sqrt{\frac{1-(1-\kappa)\cos^2\theta}{C+\cos^2\theta-2\cos\theta\E_B}} d\theta \\
			&< \int_0^{\frac{\pi}{2}} \sqrt{\frac{1-(1-\kappa)\cos^2\theta}{{\color{black}C-\E_B^2}}} d\theta \\
		&\le \pi \sqrt{\frac{1+\kappa}{8{\color{black}(C-\E_B^2)}}},
	\end{align*} where we have used H\"older's inequality to obtain the last inequality. It follows \begin{equation}\label{eqn:ubCpx1}
		C < \frac{1+\kappa}{8}\left(\frac{\pi\xi}{L}\right)^2 {\color{black} +\, \E_B^2}.
	\end{equation} Inserting this estimate into \eqref{eqn:bvc2} shows that \begin{equation}\label{eqn:Ws2c2}
		\W_S^2 < \E_B^2 + \frac{1+\kappa}{8}\left(\frac{\pi\xi}{L}\right)^2.
	\end{equation} In the case that $C\le {\color{black}\E_B^2}$,  \eqref{eqn:ubCpx1} is still an upper bound for $C$. Hence, \eqref{eqn:Ws1c2} and \eqref{eqn:Ws2c2} prove assertion (2).
	
	\textbf{Case III:} {\it $p_x(0)<\E_B$ and $p_x(1)<0$.} In this case, it follows that {\color{black}$g(\E_B)<C<g(0)$}, so that \begin{equation}\label{eqn:bvc3}
		{\color{black}\frac{\W_S^2(1-\E_B^2)}{1-(1-\kappa)\E_B^2}+\E_B^2< C < \W_S^2.}
	\end{equation} Solving for $\W_S$ in this case shows \begin{equation}\label{eqn:Ws1c3}
		\W_S^2 > \frac{1-(1-\kappa)\E_B^2}{\kappa}.
	\end{equation} Since {\color{black}$C>g(\E_B)>\E_B^2$}, we estimate using \eqref{eqn:fiint} \begin{align*}
		\frac{L}{\xi} &< \int_0^\pi \sqrt{\frac{1-(1-\kappa)\cos^2\theta}{C{\color{black}-\E_B^2}}} d\theta = \pi \sqrt{\frac{1+\kappa}{2(\color{black}C-\E_B^2)}}.
	\end{align*} Solving this for $C$ shows that \begin{equation}\label{eqn:Cub}
		C < \frac{1+\kappa}{2}\left(\frac{\pi\xi}{L}\right)^2{\color{black}+\,\E_B^2}.
	\end{equation} Then this upper bound together with \eqref{eqn:bvc3} implies \begin{equation}\label{eqn:Ws2c3}
		\W_S^2 < \left(\frac{\pi\xi}{L}\right)^2\left(\frac{1+\kappa}{2}\right)\frac{1-(1-\kappa)\E_B^2}{1-\E_B^2}.
	\end{equation} Combining \eqref{eqn:Ws1c3} with \eqref{eqn:Ws2c3} proves assertion (3).
	
	\textbf{Case IV}: {\it$p_x(0)< \E_B$ and $p_x(1)>0$.} Lastly, if $p_x(0) < \E_B$ and $p_x(1) > 0$, then it follows that $C>\max\{g(0),g(\E_B)\}$. Since $\theta(0),\theta(1)\in (0,\pi/2)$, the estimate \eqref{eqn:ubCpx1} still holds. Thus we have \begin{equation*}
		\frac{1+\kappa}{8}\left(\frac{\pi\xi}{L}\right)^2 > \max\left\{\W_S^2-\E_B^2, \W_S^2\left(\frac{1-\E_B^2}{1-(1-\kappa)\E_B^2}\right)\right\}.
	\end{equation*} Solving this estimate for $\W_S$ proves assertion (4). The proof is complete.
\end{proof}

\begin{remark}
	We comment here that in the proof of \Cref{lm:bvnec}, the condition \begin{equation*}
		C< \W_S^2\left(\frac{1-\E_B^2}{1-(1-\kappa)\E_B^2}\right)
	\end{equation*} is unused in case I because it leads to the estimate \begin{equation*}
		\frac{L}{\xi} > \int_0^{\E_B} \sqrt{\frac{1}{\W_S^2\left(\frac{1-\E_B^2}{1-(1-\kappa)\E_B^2}\right)+\E_B^2}}dy = \frac{\E_B}{\sqrt{{\W_S^2\left(\frac{1-\E_B^2}{1-(1-\kappa)\E_B^2}\right)+\E_B^2}}}.
	\end{equation*} This is equivalent to \begin{equation*}
		\E_B^2+ \W_S^2\left(\frac{1-\E_B^2}{1-(1-\kappa)\E_B^2}\right)> \E_B^2\left(\frac{\xi}{L}\right)^2,
	\end{equation*} which always holds because $L/\xi\ge 1$ by assumption in this paper.
\end{remark}

{\color{black} 
Of the possible endpoint regimes considered in \Cref{lm:bvnec}, the configuration $p_x(0)>\E_B$, $p_x(1)<0$ and the two involving $p_x(1)>0$ are of particular interest because they coincide with the preferred structure of the polar director in the case of high anchoring strength and high bias, respectively. Heuristically, this is expected from the form of the energy $I(\theta)$ \eqref{eqn:enI} from which BVP \eqref{eqn:el}-\eqref{eqn:bdry} is derived. Indeed, when $\W_S$ is large relative to $L/\xi$ and $\E_B$, $\theta$ will favor the endpoint values $\cos\theta(0)\approx 1$, $\cos\theta(1)\approx -1$ in order to minimize the surface contribution to the free energy. On the other hand, when $\E_B$ dominates $\W_S$ and $L/\xi$, the minimizing profile prefers $\cos\theta\approx 1$ in bulk in order to minimize the electrostatic contribution. In particular, at the boundary $x=1$, we expect a competition between the surface preference $p_x(1)\approx -1$ and the electrostatic preference $p_x(1)\approx 1$ driven primarily by the parameters $\W_S$, $\E_B$, and $L/\xi$.}

The following two lemmas provide sufficient conditions for these endpoint configurations.

\begin{lemma}\label{lm:bvbigWs}
	If $\E_B<1$ and $\W_S> \max\left\{\sqrt{\E_B^2+ \frac{1+\kappa}{2}\left(\frac{\pi\xi}{L}\right)^2}, \frac{\pi\xi}{L}\sqrt{\frac{1+\kappa}{2}\left(1+\frac{\kappa\E_B^2}{1-\E_B^2}\right)}\right\}$, then $p_x(0)>\E_B$ and $p_x(1)<0$.
\end{lemma}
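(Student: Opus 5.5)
The plan is to argue by contradiction, using the classification of endpoint configurations in \Cref{lm:bvnec}. By \Cref{lm:mono} and \Cref{rmk:trivsol}, a nonconstant solution has $0<\theta(0)<\theta(1)<\pi$. By the proof of \Cref{thm:existence} (Case V), $p_x(0)=t_3>0$ and $p_x(1)=t_2<t_3$ are the two roots in $(-1,1)$ of $g(t)=C$. Up to the degenerate cases $p_x(0)=\E_B$ or $p_x(1)=0$, which we handle at the end, the endpoint pair falls into one of the four configurations of \Cref{lm:bvnec}. We show that the hypothesis on $\W_S$ rules out configurations (2), (3) and (4), leaving only (1).

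\textbf{Case (2) or (4).} Here $p_x(1)>0$. Each of these cases requires
\[
\W_S<\sqrt{\E_B^2+\tfrac{1+\kappa}{8}\left(\tfrac{\pi\xi}{L}\right)^2}.
\]
This contradicts the first entry of the assumed maximum, since $\frac{1+\kappa}{8}<\frac{1+\kappa}{2}$.

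\textbf{Case (3).} This case requires
\[
\W_S<\frac{\pi\xi}{L}\sqrt{\frac{(1+\kappa)-(1-\kappa^2)\E_B^2}{2(1-\E_B^2)}}.
\]
Expand the radicand:
\[
\frac{(1+\kappa)(1-(1-\kappa)\E_B^2)}{2(1-\E_B^2)}=\frac{1+\kappa}{2}\left(1+\frac{\kappa\E_B^2}{1-\E_B^2}\right).
\]
So this upper bound is exactly the second entry of the maximum, and the hypothesis contradicts it. This algebraic identity is the one point to verify carefully; it explains where the second expression in the hypothesis comes from.

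\textbf{Boundary cases.} It remains to exclude $p_x(0)=\E_B$ and $p_x(1)=0$, which is the expected main obstacle, since \Cref{lm:bvnec} uses strict inequalities. I would rerun the estimates of \Cref{lm:bvnec} with non-strict endpoint relations. If $p_x(1)=0$, then $C=g(0)=\W_S^2$. If $p_x(0)=\E_B$, then $C=g(\E_B)$.

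In each such subcase, the integral identity \eqref{eqn:fiint} still gives a strict upper bound on $C$, because the integrand bound is strict on a set of positive measure. Concretely:
\begin{itemize}
\item If $\theta(1)\le\pi/2$, then $C<\E_B^2+\frac{1+\kappa}{8}(\pi\xi/L)^2$.
\item In general, $C<\E_B^2+\frac{1+\kappa}{2}(\pi\xi/L)^2$.
\end{itemize}
The lower bound on $C$ comes from $C=g(0)$ or $C=g(\E_B)$, combined with the concavity and minimum formula \eqref{eqn:minF}. This gives $C\ge\min\{g(0),g(\E_B)\}$ whenever one endpoint lies in $[0,\E_B]$. Comparing the two bounds again forces $\W_S$ below one of the two thresholds, contradicting the hypothesis.

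With all other alternatives excluded, $p_x(0)>\E_B$ and $p_x(1)<0$.
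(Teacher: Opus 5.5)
Your proof is correct, but it is organized differently from the paper's. The paper argues directly. It combines the general bound \eqref{eqn:Cub}, $C<\E_B^2+\frac{1+\kappa}{2}(\pi\xi/L)^2$, with the two parts of the hypothesis to get $C<\min\{g(0),g(\E_B)\}$. The location of the two roots of $g=C$ then follows from the concavity of $g$ and its monotonicity on $(-1,0)$ and $(\E_B,1)$. Since that inequality is strict, the degenerate configurations $p_x(0)=\E_B$, $p_x(1)=0$ never need separate treatment; they arise only because you route through the strict-inequality cases of \Cref{lm:bvnec}.

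In fact your final paragraph is the paper's argument in contrapositive form, and it already handles every failure case, not only the degenerate ones. If the conclusion fails, then either $p_x(0)\in(0,\E_B]$, or $p_x(1)\in[0,\E_B]$. The latter cannot exceed $\E_B$, because $g$ is strictly decreasing on $(\E_B,1]$ and cannot take the value $C$ twice there. So $C\ge\min\{g(0),g(\E_B)\}$ by \eqref{eqn:minF}, contradicting \eqref{eqn:Cub}. Your cases (2)--(4) are therefore logically redundant.

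What your finer split does buy is a slightly sharper result. In every failing configuration with $p_x(1)\ge 0$ you have $\theta\in(0,\pi/2]$ and $C\ge g(0)=\W_S^2$, so your first bullet yields $\W_S^2<\E_B^2+\frac{1+\kappa}{8}(\pi\xi/L)^2$. The $\frac{1+\kappa}{2}$ bound is needed only when $p_x(1)<0$ and $p_x(0)\le\E_B$. There it pairs with $C\ge g(\E_B)$ to give precisely the second threshold, via your algebraic identity. Hence the lemma holds with $\frac{1+\kappa}{8}$ in place of $\frac{1+\kappa}{2}$ in the first entry of the maximum.
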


\begin{proof}
	We adopt the notation of \Cref{lm:bvnec}; specifically, recall that $p_x(0)$ and $p_x(1)$ are two solutions to $g(t)=C$ on the interval $[-1,1]$. Since $g''(t)<0$ on $[-1,1]$, it follows that these are the only two solutions. Moreover, since $g$ is monotonically increasing on $(-1,0)$ and monotonically decreasing on $(\E_B,1)$, in view of \eqref{eqn:minF} it suffices to show that \begin{equation}\label{eqn:bigWsC}
		{\color{black}C < \min\left\{\W_S^2, \W_S^2\left(\frac{1-\E_B^2}{1-(1-\kappa)\E_B^2}\right)+\E_B^2\right\}.}
	\end{equation} 
	
	To this end, recall from \eqref{eqn:Cub} that \begin{equation*}
		C < \frac{1+\kappa}{2}\left(\frac{\pi\xi}{L}\right)^2{\color{black}+\,\E_B^2}.
	\end{equation*} By the assumption that $\W_S>\sqrt{\E_B^2+\frac{1+\kappa}{2}\left(\frac{\pi\xi}{L}\right)^2}$, we see that \begin{align*}
		\W_S^2> \frac{1+\kappa}{2}\left(\frac{\pi\xi}{L}\right)^2 +\E_B^2 > C.
	\end{align*} Similarly, since $\W_S> \frac{\pi\xi}{L}\sqrt{\frac{1+\kappa}{2}\left(1+\frac{\kappa\E_B^2}{1-\E_B^2}\right)}$, {\color{black}we calculate \begin{align*}
		\W_S^2\left(\frac{1-\E_B^2}{1-(1-\kappa)\E_B^2}\right) +\E_B^2&> \left(\frac{\pi\xi}{L}\right)^2 \left(\frac{1+\kappa}{2}\right)\left(1+\frac{\kappa\E_B^2}{1-\E_B^2}\right)\left(\frac{1-\E_B^2}{1-(1-\kappa)\E_B^2}\right)  \\ 
			&= \frac{1+\kappa}{2}\left(\frac{\pi\xi}{L}\right)^2+\E_B^2 \\
			&> C.
	\end{align*} }This proves \eqref{eqn:bigWsC}. The proof is complete.
\end{proof}

\begin{remark}
	\Cref{lm:bvbigWs} holds even in the case of zero bias because the assumption on the surface anchoring $\W_S$ still enforces \eqref{eqn:bigWsC} when $\E_B=0$. Alternatively, the same conclusion can be reached by combining \Cref{lm:sym,lm:mono}. Indeed, when $\E_B=0$, \Cref{lm:sym} implies $p_x(1/2)=0$ and $p_x(x)\ne 0$ for all $x\ne 1/2$; then, \Cref{lm:mono} gives $p_x(0)> \E_B=0 > p_x(1)$. 
\end{remark}

\begin{lemma}\label{lm:theta1}
	If $\E_B > \frac{1+\W_S^2}{2}$, then $p_x(1)>0$. {\color{black} Moreover, if $\E_B>1+\frac{\W_S^2}{\kappa}$, then $p_x(x)\equiv 1$}.
\end{lemma}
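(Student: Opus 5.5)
The plan is to work entirely with the algebraic compatibility function
$$g(t)=\W_S^2\frac{1-t^2}{1-(1-\kappa)t^2}-t^2+2t\E_B$$
from the proof of \Cref{thm:existence}. For a nonconstant solution, $p_x(0)$ and $p_x(1)$ are the two roots of $g(t)=C$ in $[-1,1]$, with $p_x(1)<p_x(0)$ by \Cref{lm:mono}. Moreover, part (4) of \Cref{thm:existence} forces
$$2\E_B-1<C<\max_{|t|\le 1}g(t).$$

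\textbf{First assertion.} Observe that $g(0)=\W_S^2$, so the hypothesis $\E_B>\frac{1+\W_S^2}{2}$ reads $2\E_B-1>g(0)$. Hence $C>2\E_B-1>g(0)$.

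Next I use the formula for $g'$ computed in the proof of \Cref{lm:bvnec}. For $t<0$ both $-\kappa\W_S^2 t/(1-(1-\kappa)t^2)^2$ and $-(t-\E_B)$ are positive, so $g'(t)>0$ and $g$ is increasing on $[-1,0]$. Therefore $g(t)\le g(0)<C$ for all $t\in[-1,0]$, so $g(t)=C$ has no root in $[-1,0]$. In particular $p_x(1)>0$.

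\textbf{Second assertion.} I will show that $g$ is strictly increasing on all of $[-1,1]$. Then $\max_{|t|\le1}g=g(1)=2\E_B-1$, so the admissible window $(2\E_B-1,\max g)$ in part (4) of \Cref{thm:existence} is empty and no nonconstant solution exists.

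On $[0,1]$ I bound the denominator below by $1-(1-\kappa)t^2\ge\kappa$. This gives
$$\frac{\kappa\W_S^2 t}{(1-(1-\kappa)t^2)^2}\le\frac{\W_S^2}{\kappa},$$
and hence
$$g'(t)\ge 2\Bigl(\E_B-1-\frac{\W_S^2}{\kappa}\Bigr)>0$$
under the hypothesis $\E_B>1+\W_S^2/\kappa$. Together with monotonicity on $[-1,0]$, this shows $g$ is strictly increasing on $[-1,1]$.

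\textbf{Which constant solution.} Any solution is then constant, so by \Cref{thm:existence} and \Cref{rmk:trivsol} it is $\theta\equiv0$ (when $C=2\E_B-1$) or $\theta\equiv\pi$ (when $C=-2\E_B-1$). Both satisfy \eqref{eqn:el}-\eqref{eqn:bdry}, so the ODE alone does not exclude $\theta\equiv\pi$.

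This is the main obstacle, and I would resolve it through the variational origin of the solution in \Cref{thm:existence}. The constructed solution is a minimizer of $I$, and a direct evaluation of \eqref{eqn:enI} gives
$$I(0)=\tfrac12\Bigl(\tfrac{L}{\xi}\Bigr)^2(1-2\E_B)<\tfrac12\Bigl(\tfrac{L}{\xi}\Bigr)^2(1+2\E_B)=I(\pi),$$
since the surface terms cancel for constant $\theta$. So the energy-minimizing, physically relevant solution is $\theta\equiv0$, i.e. $p_x\equiv1$. I would state explicitly that the second assertion is meant in this sense. Equivalently, $\theta\equiv\pi$ is excluded whenever one also assumes $p_x(0)>0$, as in the setting of \Cref{lm:snd}.
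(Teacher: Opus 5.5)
Your proof of the first assertion, and your proof that $g$ is strictly increasing on $[-1,1]$ when $\E_B>1+\W_S^2/\kappa$ (using $1-(1-\kappa)t^2\ge\kappa$), are the same as the paper's. You diverge only at the final step, where one must choose between the constant solutions $\theta\equiv0$ and $\theta\equiv\pi$.

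The paper disposes of that step by saying the first part gives $\cos\theta(1)>0$, hence $\theta\equiv0$. But the first part rests on $C>2\E_B-1$, which holds only for nonconstant solutions. The solution $\theta\equiv\pi$ has $C=-2\E_B-1$ and $p_x(1)=-1$, and it solves \eqref{eqn:el}--\eqref{eqn:bdry} for every choice of parameters. So the paper's argument does not actually exclude $\theta\equiv\pi$, and the literal claim $p_x\equiv1$ is false for arbitrary solutions.

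Your observation of this obstruction is correct, and your repair is what makes the statement true. You apply the energy comparison $I(0)<I(\pi)$, valid because $\E_B>0$, to the minimizer constructed in \Cref{thm:existence}. That minimizer is a solution, hence constant, hence $0$ or $\pi$, hence $0$. Alternatively, you impose $p_x(0)>0$. The paper's version buys brevity; yours buys a correctly qualified statement.

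One minor point: a nonconstant solution is forced to have $C\in(2\E_B-1,\max_{|t|\le1}g)$ by the exhaustive case analysis of \Cref{thm:existence} (parts (1)--(3) rule out every other $C$), not by part (4) alone.
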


\begin{proof}
	{\color{black}To prove the first part, recalling from the proof of \Cref{lm:bvnec} that $g(p_x(0))=g(p_x(1))=C$ and that $g$ is monotonically increasing on $(-1,0)$, it suffices to show that $C>g(0)=\W_S^2$. In view of \Cref{thm:existence}, for any nontrivial solution of BVP \eqref{eqn:el}-\eqref{eqn:bdry} we must have $C>2\E_B-1$. Since $\W_S^2< 2\E_B-1$ by the assumption, it follows \begin{equation*}
		g(0) = \W_S^2 < -1+2\E_B<C.
	\end{equation*}

To show that $\theta(x)$ is constant when the applied bias field is large enough, we recall that $\cos\theta(0)$ and $\cos\theta(1)$ are solutions of $g(t)=C$. If  $\E_B>1+\frac{\W_S^2}{\kappa}$, we have \begin{eqnarray*}
		g'({\color{black}t}) &=& 2\left[-\kappa\W_S^2\frac{{\color{black}t}}{(1-(1-\kappa)s^2)^2}- ({\color{black}t}-\E_B)\right]\\
&\geq&2\left[\E_B-1-\frac{\W_S^2}{\kappa}\right]>0 
	\end{eqnarray*} for {\color{black} all} $t\in [-1,1]$. 
Therefore $g(t)=C$ has at most one solution $t_0\in [-1,1]$, which correspond{\color{black} s} to a constant solution $\cos\theta(x)\equiv t_0$ {\color{black} of \eqref{eqn:el}. When $\E_B>1$, notice that $0,\pi$ are the only two admissible constant solutions of \eqref{eqn:el}}. When $\E_B>1+\frac{\W_S^2}{\kappa}$, {\color{black} the first part gives $\cos\theta(1)>0$}, so that $\theta\equiv 0$, that is, $\cos\theta(x)\equiv 1$.} The proof is complete.
\end{proof}

{\color{black}
\begin{remark}
Lemma \ref{lm:bvbigWs} confirms that under {\color{black} sufficiently} strong surface anchoring, the polar director {\color{black} adheres to the polar anchoring condition $p_x(0)>0$, $p_x(1)<0$}. On the other hand, Lemma \ref{lm:theta1} says if the applied bias field is strong enough {\color{black} (i.e., $\E_B> \frac{1+\W_S^2}{2}$)}, the polar director tends to {\color{black} align} in the same direction as the applied field and eventually {\color{black} aligns in} exactly the same direction {\color{black} as the bias} when the field is very strong {\color{black} ($\E_B> 1+\frac{\W_S^2}{\kappa}$)}. Moreover, {\color{black} this second bound suggests that for higher values of the bend-to-splay ratio $\kappa \approx 1$, a weaker applied bias field is needed in order to enforce the fully-aligned state $p_x\equiv 1$ in comparison to a smaller bend-to-splay ratio $\kappa\approx 0$}.
\end{remark}
}

{\color{black} 
\begin{remark}
	Notice that if $p_x\equiv 1$, \eqref{eqn:alg} implies that $C=2\E_B-1$. Recalling from \Cref{rmk:trivsol} that $p_x\equiv 1$ if and only if $C=2\E_B-1$, this provides a characterization of the fully-aligned state $p_x\equiv 1$ in terms of the integration constant from the first integral \eqref{eqn:fi}.
\end{remark}
}

Theorem \ref{thm:bdry} follows from Lemma \ref{lm:bvnec}, Lemma \ref{lm:bvbigWs} and Lemma \ref{lm:theta1}.

Our final result in this section shows that we can solve BVP \eqref{eqn:el}-\eqref{eqn:bdry} in the special case $\kappa=0$. For positive $\kappa$, we formalize the implicit solution obtained in \eqref{eqn:fiint} in the proof of \Cref{lm:bvnec}.

\begin{lemma}\label{lm:k}
{\color{black} The polar director} {\color{black} $\theta$ can be solved explicitly or implicitly depending on the} values of $\kappa$. {\color{black} Specifically,}
\begin{enumerate}
\item If $\kappa =0$, then $p_x(x)=\cos\theta(x)=A\sinh\frac{L}{\xi}x+B\cosh\frac{L}{{\color{black}\xi}}x+\E_B$, where
$$A={\color{black} -\W_S} ~~~\text{ and }~~~B={\color{black} \W_S \frac{\cosh\frac{L}{\xi}-1}{\sinh\frac{L}{\xi}}}.$$
\item If $\kappa>0$, then $\theta$ is implicitly defined by 
      $$
          \int_{\theta(0)}^{\theta(x)}\frac{{\color{black}\sqrt{\sin^2\theta+\kappa\cos^2\theta}}}{\sqrt{C+\cos^2\theta - 2\E_B\cos\theta}}d\theta=\frac{L}{\xi}x,
      $$
where $$C=\frac{\W_S^2\sin^2\theta(0)}{\sin^2\theta(0)+\kappa\cos^2\theta(0)}-\cos^2\theta(0)+2\E_B\cos\theta(0).$$
\end{enumerate}
\end{lemma}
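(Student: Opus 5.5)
For $\kappa=0$ the plan is to pass from $\theta$ to $p_x=\cos\theta$, where the Euler--Lagrange equation becomes linear. For $\kappa>0$ the plan is to separate variables in the first integral \eqref{eqn:fi}.

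\emph{Case $\kappa=0$.} Equation \eqref{eqn:el} reads
$$\sin^2\theta\,\theta''+\sin\theta\cos\theta(\theta')^2+\left(\tfrac{L}{\xi}\right)^2\sin\theta(\cos\theta-\E_B)=0.$$
The first two terms combine as $\sin\theta\,(\sin\theta\,\theta')'$. Since $p_x'=-\sin\theta\,\theta'$, the equation factors as
$$\sin\theta\left[-p_x''+\left(\tfrac{L}{\xi}\right)^2(p_x-\E_B)\right]=0 .$$
For a nontrivial solution, \Cref{rmk:trivsol} gives $\theta\in(0,\pi)$, so $\sin\theta\neq0$. Hence
$$p_x''=\left(\tfrac{L}{\xi}\right)^2(p_x-\E_B),$$
whose general solution is $p_x=A\sinh\frac{L}{\xi}x+B\cosh\frac{L}{\xi}x+\E_B$.

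For the boundary data, \eqref{eqn:bdry} with $\kappa=0$ reads $\sin^2\theta\,\theta'=\W_S\frac{L}{\xi}\sin\theta$ at $x=0,1$. Dividing by $\sin\theta\neq0$ gives $p_x'(0)=p_x'(1)=-\W_S\frac{L}{\xi}$, a Neumann problem for the linear ODE. Differentiating the general solution, the condition at $x=0$ gives $A=-\W_S$. The condition at $x=1$ gives $A\cosh\frac{L}{\xi}+B\sinh\frac{L}{\xi}=-\W_S$, hence $B=\W_S\frac{\cosh\frac{L}{\xi}-1}{\sinh\frac{L}{\xi}}$.

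This step is the main subtlety, because for $\kappa=0$ the equation in $\theta$ degenerates wherever $\sin\theta=0$, so Picard--Lindel\"of and \Cref{lm:reg} are not directly available. I would address it by working throughout with the equation for $p_x$, which is uniformly elliptic, and dividing by $\sin\theta$ only where it is nonzero. I would also remark that the formula describes an admissible director only when $|p_x|\le1$ on $[0,1]$, which is an implicit restriction on the parameters.

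\emph{Case $\kappa>0$.} Here I would use the first integral \eqref{eqn:fi} in the normalized form
$$(\sin^2\theta+\kappa\cos^2\theta)(\theta')^2=\left(\tfrac{L}{\xi}\right)^2\left(C+\cos^2\theta-2\E_B\cos\theta\right).$$
The right-hand side is then automatically nonnegative. Since $\theta'>0$ by \Cref{lm:mono}, it is in fact positive, so its square root is well defined and $\theta'$ is the positive root. Separating variables gives
$$\frac{\sqrt{\sin^2\theta+\kappa\cos^2\theta}}{\sqrt{C+\cos^2\theta-2\E_B\cos\theta}}\,\theta'=\frac{L}{\xi},$$
and integrating from $0$ to $x$ and substituting along $\theta$ (valid since $\theta$ is strictly increasing) yields the stated implicit relation.

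The constant $C$ is found by evaluating the first integral at $x=0$. Substituting $\theta'(0)=\W_S\frac{L}{\xi}\sin\theta(0)/(\sin^2\theta(0)+\kappa\cos^2\theta(0))$ from \eqref{eqn:bdry}, exactly as in the proof of \Cref{thm:existence}, gives the stated formula for $C$.
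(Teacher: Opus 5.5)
Your route is the paper's route. For $\kappa=0$ the paper also reduces \eqref{eqn:pxode} to the linear Neumann problem $p_x''=(\frac{L}{\xi})^2(p_x-\E_B)$, $p_x'(0)=p_x'(1)=-\W_S\frac{L}{\xi}$. For $\kappa>0$ it separates variables in the first integral. Your computations of $A$, $B$ and $C$ are correct.

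The step that does not hold up is $\sin\theta\neq0$ on $[0,1]$ when $\kappa=0$; the paper's proof also divides by $\sin\theta$ without comment. \Cref{rmk:trivsol} cannot be invoked here. It rests on part (3) of \Cref{thm:existence}, whose proof applies Picard--Lindel\"of to an equation with leading coefficient $\sin^2\theta+\kappa\cos^2\theta$, and this vanishes at $\theta=0$ when $\kappa=0$. That proof also uses \eqref{eqn:idbdry}, which becomes $0/0$ there. The remedy you sketch does not close the gap either. If $\sin\theta(0)=0$, then \eqref{eqn:bdry} at $x=0$ carries no information and $p_x'(0)=-\sin\theta(0)\theta'(0)=0$, so $p_x$ does not satisfy the Neumann condition you solve.

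The gap is genuine, not just expository. Take $\kappa=0$, $L/\xi=1$, $\E_B=2$, $\W_S=\sinh 1$. Then $\theta(x)=2\arcsin\bigl(\sinh\frac{x}{2}\bigr)$ is smooth, strictly increasing and $[0,\pi]$-valued, and it solves \eqref{eqn:el}--\eqref{eqn:bdry} with $p_x=2-\cosh x$ and $\theta(0)=0$. The claimed formula instead gives $p_x(0)=1+\cosh 1>1$.

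What one can prove, for a $C^2$ nonconstant solution with values in $[0,\pi]$, is the following. At a zero $a$ of $\sin\theta$ one has $p_x(a)=\pm1$ and $p_x'(a)=0$. On an adjacent component of $\{\sin\theta\neq0\}$ the linear ODE then forces $p_x=\E_B+(p_x(a)-\E_B)\cosh\bigl(\frac{L}{\xi}(x-a)\bigr)$. This is incompatible with $|p_x|\le1$ and $\sin\theta\neq0$ on that component unless $p_x(a)=1$ and $\E_B>1$. In that remaining case $a$ cannot be interior: there $\theta'(a)=0$ because $a$ is an interior minimum, whereas the formula gives a one-sided derivative of $\pm\frac{L}{\xi}\sqrt{\E_B-1}$.

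Hence part 1 holds, by your argument, under the extra hypothesis $\sin\theta(0)\sin\theta(1)\neq0$, which is automatic when $\E_B\le1$. Your remark on $|p_x|\le1$ can also be made explicit. The formula equals $\E_B-\W_S\sinh\bigl(\frac{L}{\xi}(x-\frac12)\bigr)/\cosh\frac{L}{2\xi}$, which for $\E_B\ge0$ stays in $[-1,1]$ if and only if $\E_B+\W_S\tanh\frac{L}{2\xi}\le1$.
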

\begin{proof}
\textbf{Case $\kappa=0$:}

If $\kappa=0$, \eqref{eqn:pxode} {\color{black} together with \eqref{eqn:bdry}} simplif{\color{black} y} to a linear equation for $p_x$:
$$
-p_x''+\left(\frac{L}{\xi}\right)^2(p_x-\E_B)=0{\color{black} }
$$
with boundary conditions 
$$
p_x'(0)=p_x'(1)=-\W_S {\color{black} \left(\frac{L}{\xi}\right)}. 
$$
Direct calculation {\color{black} shows} $p_x(x)=A\sinh\frac{L}{\xi}x+B\cosh\frac{L}{\xi}x+\E_B$, where $$A={\color{black} -\W_S} ~~~\text{ and }~~~B={\color{black} \W_S \frac{\cosh\frac{L}{\xi}-1}{\sinh\frac{L}{\xi}}}.$$

\textbf{Case $\kappa>0$:}

If $\kappa >0$, 
Multiply{\color{black} ing} $\theta'$ to the equation \eqref{eqn:el} and integrat{\color{black} ing} from $0$ to $x$, we get 
$$
\sqrt{\sin^2\theta+\kappa\cos^2\theta}\theta'(x)=\frac{L}{\xi}\sqrt{C+\cos^2\theta-2\E_B\cos\theta}.$$ Here $C=\frac{\W_S^2\sin^2\theta(0)}{\sin^2\theta(0)+\kappa\cos^2\theta(0)}-\cos^2\theta(0)+2\E_B\cos\theta(0)$ is determined by {\color{black} inserting the} boundary conditions \eqref{eqn:bdry} {\color{black} into \eqref{eqn:fi}}. The conclusion of the {\color{black} l}emma follows by integration. 
\end{proof}

\subsection{Effects of electric field, surface anchoring, and cell thickness}
In this section, we prove Theorem \ref{thm:eff}  which addresses  the effects of the bias electric field, surface anchoring, and cell thickness on the structure of the polar director. {\color{black} We again denote by $\theta\in C^\infty(0,1)$ a nonconstant solution of BVP \eqref{eqn:el}-\eqref{eqn:bdry}.} 

{\color{black} The following lemma concerns the effect of the applied bias field $\E_B$. For any nonconstant solution $\theta$ of BVP \eqref{eqn:el}-\eqref{eqn:bdry}, we define the average value $\langle p_x \rangle$ of the $x$-component $p_x(x)=\cos\theta(x)$ as} \begin{equation}\label{eqn:avval}
	\langle p_x \rangle = \int_0^1 p_x(x) dx.
\end{equation}

\begin{lemma}\label{lm:Eeff}
If $\E_B<1$ and $\W_S> \max\left\{\sqrt{\E_B^2+ \frac{1+\kappa}{2}\left(\frac{\pi\xi}{L}\right)^2}, \frac{\pi\xi}{L}\sqrt{\frac{1+\kappa}{2}\left(1+\frac{\kappa\E_B^2}{1-\E_B^2}\right)}\right\}$, then $p_x(0)$ increases as $\E_B$ increases. {\color{black} When $\tilde{E}_B=0$,} {\color{black} $\langle p_x \rangle =0$}.   {\color{black}When $\E_B \rightarrow \infty,$ {\color{black} $\langle p_x \rangle = 1$}}.
\end{lemma}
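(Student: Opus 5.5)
The plan is to handle the three assertions separately, using the first integral \eqref{eqn:fi}, the algebraic relation $g(p_x(0))=g(p_x(1))=C$ from \eqref{eqn:alg2}, and the implicit representation \eqref{eqn:fiint}.

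\textbf{Monotonicity of $p_x(0)$ in $\E_B$.} Under the anchoring hypothesis, \Cref{lm:bvbigWs} places the configuration in Case I of \Cref{lm:bvnec}: $p_x(0)=t_3>\E_B$ and $p_x(1)=t_2<0$. These are the two roots of $g(t;\E_B)=C$ on $[-1,1]$, and $g$ is strictly concave in $t$, so $\partial_t g(t_3)<0$ and $\partial_t g(t_2)>0$. The constant $C=C(\E_B)$ is fixed by the length constraint
\begin{equation*}
\frac{L}{\xi}=\int_{t_2}^{t_3}\frac{1}{\sqrt{1-y^2}}\sqrt{\frac{1-(1-\kappa)y^2}{C+y^2-2\E_B y}}\,dy .
\end{equation*}
I would differentiate the system $\{g(t_3)=C,\ g(t_2)=C,\ \text{length constraint}\}$ implicitly in $\E_B$.

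From $g$ we get $\partial_{\E_B}g=2t$, which gives
\begin{equation*}
\frac{dt_i}{d\E_B}=\frac{C'(\E_B)-2t_i}{\partial_t g(t_i)} .
\end{equation*}
Hence $dt_3/d\E_B>0$ is equivalent to $C'(\E_B)<2t_3$. To obtain this bound on $C'$, I would differentiate the length constraint. Increasing $C$ by $\delta$ with $\E_B$ fixed shrinks both the integrand and the interval, so the integral strictly decreases. On the other hand, the perturbation $(\delta C,\delta\E_B)=(2t_3\,\delta,\delta)$ leaves $t_3$ fixed, moves $t_2$ outward, and changes the integrand's denominator by $2\delta(t_3-y)\ge0$. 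The resulting sign comparison should force $C'<2t_3$.

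This step is the main obstacle. The integrand blows up at the endpoints, since $C+y^2-2\E_B y$ vanishes there only in the $\kappa$-weighted sense. So I would first substitute back to the $\theta$-variable as in \eqref{eqn:fiint}, where the integrand is smooth on the closed interval, before differentiating under the integral.

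\textbf{The case $\E_B=0$.} By \Cref{lm:sym}, $\cos\theta(x)=-\cos\theta(1-x)$. The change of variables $x\mapsto 1-x$ then gives $\int_0^1 p_x\,dx=-\int_0^1 p_x\,dx$, so $\langle p_x\rangle=0$.

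\textbf{The limit $\E_B\to\infty$.} For fixed $\W_S$, once $\E_B>1+\W_S^2/\kappa$, \Cref{lm:theta1} gives $p_x\equiv1$, so $\langle p_x\rangle=1$. The statement should be read this way: in the regime of large bias, the only admissible solution is the aligned one. The anchoring hypothesis with $\E_B<1$ is irrelevant there, and I would note that explicitly.
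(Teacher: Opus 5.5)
Your $\E_B=0$ argument, oddness of $p_x$ about $x=\tfrac12$ from \Cref{lm:sym}, is more direct than the paper's integration-by-parts identity. Your $\E_B\to\infty$ argument is exactly the paper's, via \Cref{lm:theta1}.

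The gap is in the monotonicity step, at the point you leave as ``should force''. You claim the perturbation $(\delta C,\delta\E_B)=(2t_3\delta,\delta)$ moves $t_2$ outward; in fact it moves $t_2$ \emph{inward}. Linearizing $g(t_2;\E_B)=C$ gives $\partial_t g(t_2)\,\delta t_2=2(t_3-t_2)\,\delta$. Since $\partial_t g(t_2)>0$, this forces $\delta t_2>0$. With your sign, the interval would grow while the integrand shrinks, so the two effects would compete and no bound on $C'$ would follow. With the correct sign, both effects lower the integral. Write $\Phi(C,\E_B)$ for the right-hand side of your length constraint and $G(y)=C+y^2-2\E_B y$. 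Then
\begin{align*}
D:=2t_3\,\partial_C\Phi+\partial_{\E_B}\Phi
&=-\sqrt{\frac{1-(1-\kappa)t_2^2}{1-t_2^2}}\,\frac{2(t_3-t_2)}{\sqrt{G(t_2)}\,\partial_t g(t_2)}\\
&\quad-\int_{t_2}^{t_3}\sqrt{\frac{1-(1-\kappa)y^2}{1-y^2}}\,\frac{t_3-y}{G(y)^{3/2}}\,dy<0.
\end{align*}
Differentiating $\Phi(C(\E_B),\E_B)=L/\xi$ gives $C'-2t_3=-D/\partial_C\Phi<0$. Your formula for $dt_3/d\E_B$, with $\partial_t g(t_3)<0$, then gives $dt_3/d\E_B>0$.

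The obstacle you single out, blow-up of the integrand at the endpoints, does not exist. At $y=t_i$ the boundary condition gives $G(t_i)=\W_S^2(1-t_i^2)/(1-(1-\kappa)t_i^2)>0$. In between, $G(\cos\theta)=(\xi/L)^2(\sin^2\theta+\kappa\cos^2\theta)(\theta')^2>0$. Since $-1<t_2<t_3<1$, the integrand in $y$ is smooth on the closed interval, so the Leibniz rule applies directly. Also, $\partial_C\Phi<0$ is exactly the nondegeneracy the implicit function theorem needs for $C(\E_B)$ to be differentiable.

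Once the sign is fixed, your argument is the paper's computation in different coordinates. The paper eliminates $C$ in favor of $p_x(0)$ through $G$ in \eqref{eqn:tid}. The term $2(p_x(0)-y)$ in \eqref{eqn:dG} is your ``freeze $t_3$'' direction, the right-hand side of \eqref{eqn:dpx0} equals $-\W_S D$, and $H$ plays the role of $\W_S\,\partial_C\Phi\,\partial_t g(t_3)>0$. Finally, $h=\tfrac12\partial_t g$. So the signs $h(p_x(0))<0<h(p_x(1))$, which the paper extracts from \Cref{lm:bvbigWs}, already follow from strict concavity of $g$ on $[-1,1]$, just as your $\partial_t g(t_3)<0<\partial_t g(t_2)$ do.
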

\begin{proof}
We first prove the statement on {\color{black} the average value $\langle p_x \rangle$}. {\color{black} The Euler-Lagrange equation} \eqref{eqn:el} can be written  in terms of $p_x =\cos\theta$ and $p_y =\sin\theta$ as follows{\color{black} :}
\begin{equation}\label{eqn:pxode}
-\sin\theta \cdot p_x''(x)+\kappa\cos\theta \cdot p_y''(x)+\left(\frac{L}{\xi}\right)^2\sin\theta \cdot (p_x-\E_B)=0. 
\end{equation}
Divid{\color{black} ing} \eqref{eqn:pxode} by $\sin\theta${\color{black} ,} integrating by parts, {\color{black} and applying \eqref{eqn:bdry},} we get 
$$
\int_0^1 p_x(x) dx=\E_B-\kappa\left(\frac{\xi}{L}\right)^2\int_0^1\frac{\cos\theta(x)}{\sin^{2}\theta(x)}(\theta'(x))^2 dx. 
$$

When $\E_B=0$, by {\color{black} the} anti-symmetry of $\cos\theta(x)$ with respect to $x=\frac{1}{2}$ from Lemma \ref{lm:sym}, we get  
$$
\int_0^1\frac{\cos\theta(x)}{\sin^{2}\theta(x)}(\theta'(x))^2 dx=0.
$$
Therefore {\color{black} $\langle p_x \rangle =0$} {\color{black} when $\tilde{E}_B=0$.}

{\color{black} Since $\kappa\in (0,1)$ and $\W_S>0$ are fixed, the conclusion on $\langle p_x \rangle =1$ as $\E_B\to \infty$ follows from the assertion of the trivial solution $p_x\equiv 1$ in \Cref{lm:theta1} when $\E_B>1+\frac{\W_S^2}{\kappa}$.}

{\color{black} When $\tilde{E}_B>0$, we take} the first integral \eqref{eqn:id} of \eqref{eqn:el} {\color{black} to} get 
$$
 \left(\frac{\sin^2\theta(x)+\kappa \cos^2\theta(x)}{(\cos\theta(x)-\cos\theta(0))(\cos\theta(x)+\cos\theta(0)-2\E_B)+\frac{\W_S^2\sin^2\theta(0)}{\sin^2\theta(0)+\kappa\cos^2\theta(0)}}\right)^{\frac{1}{2}}d\theta=\left(\frac{L}{\xi}\right)dx.
$$
Integrat{\color{black} ing} from $x=0$ to $x=1$, {\color{black} we have}
\begin{equation}\label{eqn:tid}
\begin{split}
\frac{L}{\xi}&= \int_{p_x(1)}^{p_x(0)}\frac{\sqrt{1+(\kappa-1)y^2}}{\sqrt{1-y^2}\cdot\sqrt{(2\E_B-y-p_x(0))(p_x(0)-y)+\frac{\W_S^2\sin^2\theta(0)}{\sin^2\theta(0)+\kappa\cos^2\theta(0)}}}dy \\
	&= {\color{black} \int_{p_x(1)}^{p_x(0)} \sqrt{\frac{1+(\kappa-1)y^2}{1-y^2}}A(y) dy,}
\end{split}
\end{equation}
{\color{black} where $$
	A(y)= \frac{1}{\sqrt{(2\E_B-y-p_x(0))(p_x(0)-y)+ \frac{\W_S^2\sin^2\theta(0)}{\sin^2\theta(0)+\kappa\cos^2\theta(0)}}}.
$$}

To show {\color{black} that} $p_x(0)$ increases as $\E_B$ increases when $\E_B$ is in the stated range, set \begin{equation}\label{eqn:G}
\begin{split}
	G &= (2\E_B-y-p_x(0))(p_x(0)-y)+ {\color{black} \frac{\tilde{W}_S^2(1-p_x^2(0))}{1+(\kappa-1)p_x^2(0)}} \\ 
	&= {\color{black}  (2\E_B-y-p_x(1))(p_x(1)-y)+ {\color{black} \frac{\tilde{W}_S^2(1-p_x^2(1))}{1+(\kappa-1)p_x^2(1)}}}, 
\end{split}
\end{equation} where the equality follows from \eqref{eqn:three}. Differentiat{\color{black} ing} \eqref{eqn:tid} with respect to $\E_B$ on both sides, we obtain
\begin{eqnarray}\label{eqn:dE}
&&\frac{\sin^2\theta(0)+\kappa\cos^2\theta(0)}{\sin^2\theta(0)\W_S}\frac{dp_x(0)}{d\E_B}-\frac{\sin^2\theta(1)+\kappa\cos^2\theta(1)}{\sin^2\theta(1)\W_S}\frac{dp_x(1)}{d\E_B}\\ \nonumber 
&=&\int_{p_x(1)}^{p_x(0)}\sqrt{1+\frac{\kappa y^2}{1-y^2}}G^{-\frac{3}{2}}\cdot \frac{1}{2}\frac{dG}{d\E_B} dy,
\end{eqnarray}
where 
\begin{equation}\label{eqn:dG}
\frac{dG}{d\E_B}=2p_x(0)-2y+\left[2\E_B-2p_x(0)-\frac{2\kappa \W_S^2p_x(0)}{({\color{black} 1+(\kappa-1)p_x^2(0)})^2}\right]\frac{dp_x(0)}{d\E_B}.
\end{equation}
Differentiat{\color{black} ing} \eqref{eqn:three} with respect to $\E_B$ {\color{black} and dividing by $2$}, we obtain
\begin{eqnarray}\label{eqn:dbd}
&&\left[\E_B-p_x(0)-\frac{\kappa \W^2_Sp_x(0)}{({\color{black} 1+(\kappa-1)p_x^2(0)})^2}\right]\frac{dp_x(0)}{d\E_B}+p_x(0)-p_x(1)\\ \nonumber 
&=&\left[\E_B-p_x(1)-\frac{\kappa \W^2_Sp_x(1)}{({\color{black} 1+(\kappa-1)p_x^2(1)})^2}\right]\frac{dp_x(1)}{d\E_B}.
\end{eqnarray}
Substitut{\color{black} ing}  \eqref{eqn:dG}-\eqref{eqn:dbd} into \eqref{eqn:dE}, we get 
\begin{eqnarray}\label{eqn:dpx0}
&&\frac{dp_x(0)}{d\E_B}\cdot H\\ \nonumber 
&=& {\color{black} \W_S}\int_{p_x(1)}^{p_x(0)}\sqrt{1+\frac{\kappa y^2}{1-y^2}}G^{-\frac{3}{2}}(p_x(0)-y)dy+\frac{{\color{black} 1+(\kappa-1)p_x^2(1)}}{{\color{black} 1-p_x^2(1)}}\cdot\frac{p_x(0)-p_x(1)}{{\color{black}h}(p_x(1))}
\end{eqnarray}
where 
\begin{equation}\label{eqn:f}
{\color{black}h}(x)=\E_B-x-\frac{\kappa x\W^2_S}{(1+(\kappa-1)x^2)^2}
\end{equation}
and
\begin{eqnarray}\label{eqn:H}
H&=&\frac{{\color{black} 1+(\kappa-1)p_x^2(0)}}{{\color{black} 1-p_x(0)}}-\frac{{\color{black} 1+(\kappa-1)p_x^2(1)}}{{\color{black} 1-p_x^2(1)}}\cdot\frac{{\color{black}h}(p_x(0))}{{\color{black}h}(p_x(1))}\\ \nonumber 
&&-\int_{p_x(1)}^{p_x(0)}\sqrt{1+\frac{\kappa y^2}{1-y^2}}G^{-\frac{3}{2}}dy\cdot {\color{black}h}(p_x(0)).
\end{eqnarray}
{\color{black} Since $\E_B<1$ and $\W_S> \max\left\{\sqrt{\E_B^2 + \frac{1+\kappa}{2}\left(\frac{\pi\xi}{L}\right)^2}, \frac{\pi\xi}{L}\sqrt{\frac{1+\kappa}{2}\left(1+ \frac{\kappa\E_B^2}{1-\E_B^2}\right)}\right\}$, \Cref{lm:bvbigWs} implies that $p_x(0)>\E_B$ and $p_x(1)<0$, so that ${\color{black}h}(p_x(0))<0$ and ${\color{black}h}(p_x(1))>0$. It then follows that} {\color{black} $H>0$.} {\color{black} Since} the right hand side of \eqref{eqn:dpx0} is positive, the conclusion on $\frac{dp_x(0)}{d\E_B}$ follows.
\end{proof}

{\color{black} The following two lemmas describe the effects of the surface anchoring and cell thickness.}

\begin{lemma}\label{lm:Wsasymp}
Fix $\E_B=0$ and let  $\W_S$ increase from $0$ to $\infty${\color{black} . T}hen $p_x(0)$ increases from $0$ to $1$ while  $p_x(1)$ decreases from $0$ to $-1$.
\end{lemma}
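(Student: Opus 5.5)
By \Cref{lm:sym}, when $\E_B=0$ every nonconstant solution satisfies $p_x(1)=-p_x(0)$. So it suffices to study the single number $a:=p_x(0)\in(0,1)$ as a function of $\W_S$. Write $a=a(\W_S)$ and let $s=\sin\theta(0)$, so that $s^2=1-a^2$. With $\E_B=0$, the first integral \eqref{eqn:fi} together with the boundary condition \eqref{eqn:bdry} gives
$$C=\frac{\W_S^2(1-a^2)}{1-(1-\kappa)a^2}-a^2 .$$
The integrated identity \eqref{eqn:tid}, rewritten with $G=a^2-y^2+\frac{\W_S^2(1-a^2)}{1-(1-\kappa)a^2}$ and made even in $y$ by the symmetry $p_x(1)=-a$, becomes
\begin{equation*}
\frac{L}{\xi}=2\int_0^{a}\sqrt{\frac{1-(1-\kappa)y^2}{1-y^2}}\,\frac{dy}{\sqrt{a^2-y^2+\W_S^2\frac{1-a^2}{1-(1-\kappa)a^2}}}=:\Phi(a,\W_S).
\end{equation*}
The plan is to regard $a(\W_S)$ as defined implicitly by $\Phi(a,\W_S)=L/\xi$. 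Uniqueness of $a$ for given data comes from \Cref{thm:existence}. I would then prove monotonicity via the implicit function theorem and compute the limits at $0$ and $\infty$ directly.

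\textbf{Monotonicity.} Clearly $\partial_{\W_S}\Phi<0$, since $\W_S$ appears only in the denominator. For $\partial_a\Phi$, I would substitute $y=at$ with $t\in(0,1)$:
$$\Phi=2\int_0^1\sqrt{\frac{1-(1-\kappa)a^2t^2}{1-a^2t^2}}\,\frac{dt}{\sqrt{1-t^2+\frac{\W_S^2}{a^2}\frac{1-a^2}{1-(1-\kappa)a^2}}}.$$
The first factor is increasing in $a$, because $\frac{1-(1-\kappa)u}{1-u}=1+\frac{\kappa u}{1-u}$ is increasing in $u=a^2t^2$. The quantity $\frac{1-a^2}{a^2(1-(1-\kappa)a^2)}$ is decreasing in $a$, since $1-a^2$ decreases while $a^2-(1-\kappa)a^4$ increases for $a^2<\frac{1}{2(1-\kappa)}$. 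That restriction is the likely snag. I would instead write the quantity as $\frac{1}{a^2}\cdot\frac{1-a^2}{1-(1-\kappa)a^2}$: both factors are decreasing, the second because $\frac{1-u}{1-(1-\kappa)u}$ has derivative of sign $-\kappa<0$. Hence the second factor of the integrand is also increasing in $a$, and $\partial_a\Phi>0$. The implicit function theorem then gives $a'(\W_S)=-\partial_{\W_S}\Phi/\partial_a\Phi>0$, so $p_x(0)$ increases with $\W_S$ and $p_x(1)=-p_x(0)$ decreases.

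\textbf{Limits.} Suppose $\W_S\to0$ while $a$ stays bounded below by some $a_0>0$. Then the $t$-integral in the rescaled form of $\Phi$ tends to at most $2\sqrt{(1-(1-\kappa)a^2)/(1-a^2)}\cdot\frac{\pi}{2}$, which is finite for $a<1$. The case $a\to1$ needs separate handling: near $a=1$ the term $\W_S^2(1-a^2)$ vanishes, and the integral $\int_0^1 dt/\sqrt{(1-a^2t^2)(1-t^2)}$ blows up only logarithmically. To exclude that case I would use instead that nonconstant solutions exist only when $C>2\E_B-1=-1$ (\Cref{thm:existence}), together with the fact that in $C=\W_S^2\frac{1-a^2}{1-(1-\kappa)a^2}-a^2$ the first term tends to $0$. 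I expect this limit step to be the main obstacle, because I have not verified that nonconstant solutions exist for all small $\W_S$ with $L/\xi\ge1$. The fallback is to interpret ``from $0$'' as the infimum over the admissible range, showing $a\to0$ exactly when $\Phi(a,\W_S)$ forces $\W_S\to0$ with the integrand bounded by $\pi/\W_S$-type terms. For $\W_S\to\infty$: if $a\le a_1<1$, the integrand is $O(a/\W_S)$, so $\Phi\to0$, contradicting $\Phi=L/\xi\ge1$. Hence $a\to1$, and by symmetry $p_x(1)\to-1$.
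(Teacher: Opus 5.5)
Your monotonicity mechanism is sound. However, the integral you start from has a sign error, and that error is exactly what blocks the limit $\W_S\to0$, which you leave unproved. With $\E_B=0$ the factor $(2\E_B-y-p_x(0))(p_x(0)-y)$ in \eqref{eqn:tid} equals $-(a+y)(a-y)=y^2-a^2$. So the correct quantity is $G=y^2-a^2+\W_S^2q(a)$ with $q(a)=\frac{1-a^2}{1-(1-\kappa)a^2}$. Equivalently, by \eqref{eqn:fi}, $(\sin^2\theta+\kappa\cos^2\theta)(\theta')^2=(L/\xi)^2(C+\cos^2\theta)$ is smallest at the midpoint, not largest. After $y=at$ the denominator becomes $\sqrt{t^2-1+\W_S^2q(a)/a^2}$. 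Your proofs that $\partial_a\Phi>0$ and $\partial_{\W_S}\Phi<0$ still go through verbatim: $a$ enters only through the first factor and through $\W_S^2q(a)/a^2$, which you correctly showed is decreasing. Your argument for $\W_S\to\infty$ is also unaffected. The lower limit, however, is not merely hard to get from your formula; it is false there. In your version $\Phi(a,\W_S)<\Phi(a,0)$ and $\Phi(a,0)\downarrow\pi$ as $a\downarrow0$, so for $L/\xi>\pi$ your equation would keep $a$ bounded away from $0$. The obstacle you hit is an artifact of the sign.

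The missing idea is the constraint $C>0$, which is much sharper than $C>2\E_B-1=-1$ from \Cref{thm:existence}. Evaluate \eqref{eqn:fi} at $x=1/2$, where $\cos\theta=0$ by \Cref{lm:sym} and $\theta'>0$. This gives $(L/\xi)^2C=\theta'(1/2)^2>0$, the same computation as in the proof of \Cref{lm:sinxbd}. Note that $C=G(0)$, so this is also exactly what makes the corrected integrand finite at $t=0$. Hence $a^2<\W_S^2q(a)<\W_S^2$, i.e. $0<p_x(0)<\W_S$. This gives $p_x(0)\to0$ and $p_x(1)=-p_x(0)\to0$ as $\W_S\to0$; you were one step away when you observed that the first term of $C$ tends to $0$.

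The same positivity removes your existence worry. For fixed $\W_S$ the admissible $a$ form an interval $(0,a^*)$ with $\W_S^2q(a^*)=(a^*)^2$. On this interval the corrected $\Phi(\cdot,\W_S)$ increases strictly from $0$ to $+\infty$. So for every $\W_S>0$ and every $L/\xi$ there is exactly one admissible $a$, hence (reconstructing $\theta$ from the implicit formula of \Cref{lm:k}) exactly one nonconstant solution. \Cref{thm:existence} by itself only gives uniqueness for fixed $C$; uniqueness of $a$ actually comes from your $\partial_a\Phi>0$.

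With these corrections your route is a genuine and cleaner alternative to the paper's. The paper differentiates the unsymmetrized identity \eqref{eqn:tid} in $\W_S$ and has to verify the sign of the coefficient $H$ through the function $h$. Your symmetry reduction and rescaling make the signs of both partial derivatives evident and give monotonicity for every $\W_S>0$ at once.
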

\begin{proof}
{\color{black}
We first insert the boundary condition \eqref{eqn:bdry} into \eqref{eqn:fi} to obtain
\begin{eqnarray}\label{eqn:two}
&&\frac{\W^2_S}{(1-\kappa)+\frac{\kappa} {\sin^2\theta(0)}}-(\cos^2\theta(0)-2\E_B\cos\theta(0))\\ \nonumber 
&=&\frac{\W^2_S}{(1-\kappa)+\frac{\kappa} {\sin^2\theta(1)}}-(\cos^2\theta(1)-2\E_B\cos\theta(1)).
\end{eqnarray}
Thus} {\color{black} for any $\E_B\ge 0$} we have 
\begin{eqnarray}\label{eqn:cosbd}
&&\frac{\kappa\W^2_S}{w(\cos\theta({\color{black} 0}))\cdot w(\cos\theta(1))}\cdot (\cos\theta(1)-\cos\theta(0))(\cos\theta(0)+\cos\theta(1))\\ \nonumber 
&&=(\cos\theta(1)-\cos\theta(0))(2\E_B-\cos\theta(1)-\cos\theta(0))
\end{eqnarray}
where $w({\color{black}t})=(1+(\kappa-1){\color{black}t}^2)$. If $\cos\theta(1)-\cos\theta(0)\neq 0$, \eqref{eqn:cosbd} yields $0\leq \cos\theta(1)+\cos\theta(0)\leq 2\E_B.$ If $\W_S=0$, {\color{black} since $\theta^\prime>0$ we must have 
$$
	2\E_B - \cos\theta(1)-\cos\theta(0)=0
$$ by \eqref{eqn:cosbd}, that is,
\begin{equation}\label{eqn:cos01}
	\cos\theta(0)-\E_B = -\cos\theta(1)+ \tilde{E}_B.	
\end{equation}}

When $\E_B=0$ and $\W_S=0$, {\color{black}\eqref{eqn:tid} and \eqref{eqn:cos01} imply that} $\theta(x)\equiv \frac{\pi}{2}$. When $\W_S\rightarrow \infty$, since the {\color{black}left} hand side of \eqref{eqn:tid} is constant, we must have $\sin\theta(0)\rightarrow 0$ and $\sin\theta(1)\rightarrow {\color{black}0}$, i.e. $p_x(0)\rightarrow 1$ and $p_x(1)\rightarrow -1$ by \eqref{eqn:cos01}. 

To show how $p_x(0)$ changes when $\W_S$ increases, we differentiate \eqref{eqn:tid} with respect to $\W_S$ on both sides and get 
\begin{eqnarray}\label{eqn:dW}
&&\frac{\sin^2\theta(0)+\kappa\cos^2\theta(0)}{\W_S\sin^2\theta(0)}\frac{dp_x(0)}{d\W_s}-\frac{\sin^2\theta(1)+\kappa\cos^2\theta(1)}{\W_S\sin^2\theta(1)}\frac{dp_x(1)}{d\W_s}\\ \nonumber
&=&\int_{p_x(1)}^{p_x(0)}\sqrt{1+\frac{\kappa y^2}{1-y^2}}G^{-\frac{3}{2}}\cdot \frac{1}{2}\frac{dG}{d\W_S} dy{\color{black} ,}
\end{eqnarray}
{\color{black} where $G$ is defined by \eqref{eqn:G},}
\begin{eqnarray}\label{eqn:dGW}
\frac{dG}{d\W_S}&=&2\left[\frac{\W_S}{1-\kappa+\frac{\kappa}{\sin^2\theta(0)}}+{\color{black}h}(\cos\theta(0))\frac{dp_x(0)}{d\W_S}\right]\\ \nonumber
&=&2\left[\frac{\W_S}{1-\kappa+\frac{\kappa}{\sin^2\theta(1)}}+{\color{black}h}(\cos\theta(1))\frac{dp_x(1)}{d\W_S}\right],
\end{eqnarray}
{\color{black} and} ${\color{black}h}(x)$ is defined by \eqref{eqn:f}.
Combining \eqref{eqn:dW} and \eqref{eqn:dGW}, we get 
\begin{eqnarray}\label{eqn:p0W}
\frac{dp_x(0)}{d\W_S}\cdot H&=&\frac{\W^2_S}{1-\kappa+\frac{\kappa}{\sin^2\theta(0)}}\int_{p_x(1)}^{p_x(0)}\sqrt{1+\frac{\kappa y^2}{1-y^2}}G^{-\frac{3}{2}}dy\\ \nonumber 
&&+\frac{\sin^2\theta(1)+\kappa\cos^2\theta(1)}{{\color{black}h}(\cos\theta(1))\sin^2\theta(1)}\left(\frac{\W^2_S}{1-\kappa+\frac{\kappa}{\sin^2\theta(0)}}-\frac{\W^2_S}{1-\kappa+\frac{\kappa}{\sin^2\theta(1)}}\right),
\end{eqnarray}
where $H$ is defined by \eqref{eqn:H}.
{\color{black} Since $\E_B=0$ and $\W_S$ increases to $\infty$, for $\W_S$ sufficiently large \Cref{lm:bvbigWs} implies that $\cos\theta(0)>\E_B$ and $\cos\theta(1)<0$.} Consequently, ${\color{black}h}(\cos\theta(0))<0,$ ${\color{black}h}(\cos\theta(1))>0$, and $\eqref{eqn:H}$ gives $H>0$. It then follows that the right hand side of  \eqref{eqn:p0W} is positive, which shows that $\frac{dp_x(0)}{d\W_S}>0$. In addition, since $\sin^2\theta(0)=\sin^2\theta(1)$, \eqref{eqn:dGW} yields $\frac{dp_x(1)}{d\W_S}<0$.


\end{proof}

\begin{lemma}\label{lm:lxipx0}
Fix $\E_B<1$ and $\W_S> \max\left\{\sqrt{\E_B^2+ \frac{1+\kappa}{2}\left(\frac{\pi\xi}{L}\right)^2}, \frac{\pi\xi}{L}\sqrt{\frac{1+\kappa}{2}\left(1+\frac{\kappa\E_B^2}{1-\E_B^2}\right)}\right\}$. If $\frac{L}{\xi}$ increases, then $p_x(0)$ increases {\color{black} and}  $p_x(1)$ decreases.
\end{lemma}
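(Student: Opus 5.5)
The plan is to follow the same implicit-differentiation scheme used in \Cref{lm:Eeff} and \Cref{lm:Wsasymp}, now with $\ell:=L/\xi$ as the parameter. By \Cref{lm:bvbigWs}, the hypotheses on $\E_B$ and $\W_S$ give $p_x(0)>\E_B$ and $p_x(1)<0$. Enlarging $\ell$ only weakens those hypotheses, since $\pi\xi/L$ decreases. So along the whole family $p_x(0)>\E_B$, $p_x(1)<0$, and with $h$ as in \eqref{eqn:f} we get $h(p_x(0))<0$ and $h(p_x(1))>0$. We regard $a=p_x(0)$ and $b=p_x(1)$ as functions of $\ell$. They are determined by two relations. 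The first is the boundary compatibility \eqref{eqn:two}, i.e.\ $g(a)=g(b)$, where $g$ is the function of \eqref{eqn:alg2}; here $g$ does not depend on $\ell$. The second is the integral identity \eqref{eqn:tid}:
\begin{equation*}
\ell=\int_b^a\sqrt{\frac{1+(\kappa-1)y^2}{1-y^2}}\,G(y)^{-1/2}\,dy,\qquad G(y)=g(a)-y^2+2\E_B y .
\end{equation*}
Both functions are smooth in $\ell$ by the implicit function theorem, provided the Jacobian computed below is nonzero.

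First, differentiate $g(a)=g(b)$ in $\ell$. Since $g'(t)=2h(t)$, this gives $h(a)\,a'=h(b)\,b'$. With $h(a)<0<h(b)$, the derivatives $a'$ and $b'$ have opposite signs (or both vanish), so it suffices to show $a'>0$. Second, differentiate the integral identity. At the endpoints we have $G(a)=\W_S^2\sin^2\theta(0)/(\sin^2\theta(0)+\kappa\cos^2\theta(0))$, and similarly at $b$, using $g(a)=g(b)$. The boundary terms then collapse exactly as in \eqref{eqn:dE}. In the interior, $\partial G/\partial\ell=2h(a)a'$, because $\ell$ enters $G$ only through $g(a)$. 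This yields
\begin{equation*}
1=\frac{w(a)}{\W_S(1-a^2)}\,a'-\frac{w(b)}{\W_S(1-b^2)}\,b'-h(a)\,a'\int_b^a\sqrt{\tfrac{1+(\kappa-1)y^2}{1-y^2}}\,G^{-3/2}dy ,
\end{equation*}
with $w(t)=1+(\kappa-1)t^2$.

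Third, substitute $b'=h(a)a'/h(b)$. This gives $a'\cdot H/\W_S=1$, where $H$ is (up to the factor $\W_S$ normalisation) precisely the quantity in \eqref{eqn:H}:
\begin{equation*}
H=\frac{w(a)}{1-a^2}-\frac{w(b)}{1-b^2}\frac{h(a)}{h(b)}-\W_S\,h(a)\int_b^a\sqrt{\tfrac{1+(\kappa-1)y^2}{1-y^2}}\,G^{-3/2}dy .
\end{equation*}
Each of the three terms is positive. The first holds because $w>0$ on $[-1,1]$. The second holds because $-h(a)/h(b)>0$. The third holds because $-h(a)>0$, the integrand is positive, and $b<a$.

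Hence $H>0$, so $a'=\W_S/H>0$, and then $b'=h(a)a'/h(b)<0$. This is the claim: $p_x(0)$ increases and $p_x(1)$ decreases as $L/\xi$ increases.

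The main obstacle is making the differentiation rigorous, specifically the regularity of $a$ and $b$ in $\ell$. The integrand of $\int_b^a$ has endpoint singularities only if $a$ or $b$ equals $\pm1$, which is excluded because $\theta\in(0,\pi)$ for nontrivial solutions (\Cref{rmk:trivsol}). Likewise $G>0$ on $[b,a]$ because $g$ is concave with $g(a)=g(b)$. So the map $(a,b,\ell)\mapsto(g(a)-g(b),\,\text{integral}-\ell)$ is $C^1$, and its Jacobian in $(a,b)$ is nonzero precisely because of the sign computation above ($H>0$). The implicit function theorem, together with the uniqueness from \Cref{thm:existence}, then ensures that the branch of solutions is the differentiable one used in the computation.
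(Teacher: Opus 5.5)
Your proposal is correct and is essentially the paper's proof: you differentiate \eqref{eqn:tid} in $L/\xi$ and eliminate $dp_x(1)/d(L/\xi)$ via $h(p_x(0))\,p_x(0)'=h(p_x(1))\,p_x(1)'$ (the paper's \eqref{eqn:dGL}). You then arrive at $I\cdot dp_x(0)/d(L/\xi)=1$ with your $H=\W_S I$, and sign $I$ using \Cref{lm:bvbigWs}, exactly as the paper does. There are a few minor slips in your side remarks: the correct formula is $G(y)=g(a)+y^2-2\E_B y$, which is what actually produces your endpoint values; positivity of $G$ on $[b,a]$ comes from the first integral \eqref{eqn:fi}, namely $G(\cos\theta)=(\sin^2\theta+\kappa\cos^2\theta)(\theta')^2(\xi/L)^2$ with $\theta'>0$, not from concavity of $g$; and \Cref{thm:existence} gives uniqueness only for fixed $C$, so your implicit-function branch should be read as the branch through the given solution.
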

\begin{proof}
Differentiat{\color{black} ing} \eqref{eqn:tid} with respect to $\frac{L}{\xi}$ on both sides, we get 
\begin{eqnarray}\label{eqn:dL}
&&\frac{\sin^2\theta(0)+\kappa\cos^2\theta(0)}{\W_S\sin^2\theta(0)}\frac{dp_x(0)}{d{\frac{L}{\xi}}}-\frac{\sin^2\theta(1)+\kappa\cos^2\theta(1)}{\W_S\sin^2\theta(1)}\frac{dp_x(1)}{d{\frac{L}{\xi}}}\\ \nonumber 
&=&\int_{p_x(1)}^{p_x(0)}\sqrt{1+\frac{\kappa y^2}{1-y^2}}G^{-\frac{3}{2}}\cdot \frac{1}{2}\frac{dG}{d{\frac{L}{\xi}}} dy+1,
\end{eqnarray}
where 
\begin{eqnarray}\label{eqn:dGL}
\frac{dG}{d\frac{L}{\xi}}&=&\left[2\E_B-2p_x(0)-\frac{2\kappa \W_S^2p_x(0)}{(\sin^2\theta(0)+\kappa\cos^2\theta(0))^2}\right]\frac{dp_x(0)}{d\frac{L}{\xi}}\\ \nonumber 
                                    &=&\left[2\E_B-2p_x(1)-\frac{2\kappa\W_S^2 p_x(1)}{(\sin^2\theta(1)+\kappa\cos^2\theta(1))^2}\right]\frac{dp_x(1)}{d\frac{L}{\xi}}
\end{eqnarray}
Combining \eqref{eqn:dL} and \eqref{eqn:dGL}, we get {\color{black} \begin{equation}\label{eqn:I}
	I\cdot \frac{dp_x(0)}{d\frac{L}{\xi}} = 1,
\end{equation}
where
\begin{equation*}
\begin{split}
	I &= \frac{\sin^2\theta(0)+\kappa\cos^2\theta(0)}{\W_S\sin^2\theta(0)}- \frac{\sin^2\theta(1)+\kappa\cos^2\theta(1)}{\W_S\sin^2\theta(1)} \cdot \frac{{\color{black}h}(p_x(0))}{{\color{black}h}(p_x(1))} 
		\\ &\qquad -  \int_{p_x(1)}^{p_x(0)} \sqrt{1+ \frac{\kappa y^2}{1-y^2}} G^{-\frac{3}{2}} dy \cdot {\color{black}h}(p_x(0))
\end{split}
\end{equation*}
and $h$ is defined in \eqref{eqn:f}. {\color{black} Since $\E_B<1$ and $\W_S> \max\left\{\sqrt{\E_B^2+ \frac{1+\kappa}{2}\left(\frac{\pi\xi}{L}\right)^2}, \frac{\pi\xi}{L}\sqrt{\frac{1+\kappa}{2}\left(1+\frac{\kappa\E_B^2}{1-\E_B^2}\right)}\right\}$, \Cref{lm:bvbigWs} implies that $p_x(0)>\E_B$ and $p_x(1)<0$, so again ${\color{black}h}(p_x(0))<0$ and ${\color{black}h}(p_x(1))>0$.} Hence, $I>0$. In view of \eqref{eqn:I}, $\frac{dp_x(0)}{d\frac{L}{\xi}}>0$. It then follows from equation \eqref{eqn:dGL} that $\frac{dp_x(1)}{d\frac{L}{\xi}}<0$. }

\end{proof}

\Cref{lm:Eeff,lm:Wsasymp,lm:lxipx0} together prove \Cref{thm:eff}.

\subsection{Local stability of $\theta$}
{\color{black} In this section, we consider the local stability of nonconstant solutions to BVP \eqref{eqn:el}-\eqref{eqn:bdry}. In particular, we prove \Cref{thm:stability} in this section.}

\begin{lemma}\label{lm:st}
If $\sin^2\theta\geq \frac{\kappa}{1{\color{black}+}\kappa}$ and $\E_B =0$, then $H_{\theta}(\varphi,\varphi)\geq 0$ for any $\varphi$ satisfying $\varphi(0)=\varphi(1)=0$. In particular, if $\kappa=0$ and $\E_B=0$, then $H_{\theta}(\varphi,\varphi)\geq 0$ for any $\varphi$ vanishing on the boundary. 
\end{lemma}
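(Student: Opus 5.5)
The plan is to compute the second variation of the dimensionless energy $I$ in \eqref{eqn:enI} at the solution $\theta$, in directions $\varphi$ with $\varphi(0)=\varphi(1)=0$. I would then show it is nonnegative by a Jacobi-field (Picone-type) factorization, using that $\theta'$ solves the linearized equation and is strictly positive by \Cref{lm:mono}.

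\textbf{Step 1: the quadratic form.} Write $a(\theta)=\sin^2\theta+\kappa\cos^2\theta$ and $\lambda=L/\xi$. Since $\varphi$ vanishes at $x=0,1$, the surface terms of $I$ contribute nothing to the second variation. With $\E_B=0$,
\begin{equation*}
H_\theta(\varphi,\varphi)=\int_0^1\Big(a(\theta)(\varphi')^2+2a_\theta(\theta)\theta'\varphi\varphi'+\tfrac12 a_{\theta\theta}(\theta)(\theta')^2\varphi^2-\lambda^2\cos 2\theta\,\varphi^2\Big)dx,
\end{equation*}
where $a_\theta=(1-\kappa)\sin 2\theta$ and $a_{\theta\theta}=2(1-\kappa)\cos2\theta$. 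Abbreviate this as $\int_0^1 \big(A(\varphi')^2+2B\varphi\varphi'+D\varphi^2\big)\,dx$. The associated Jacobi operator is $\mathcal{J}u=-(Au'+Bu)'+Bu'+Du$.

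\textbf{Step 2: $w=\theta'$ is a Jacobi field.} Equation \eqref{eqn:el} is autonomous, so $\theta(x+s)$ solves it for every $s$. Differentiating in $s$ at $s=0$, or equivalently differentiating \eqref{eqn:elw} in $x$, gives $\mathcal{J}\theta'=0$ on $(0,1)$. By \Cref{lm:reg}, \Cref{lm:mono} and \Cref{thm:property}(1), $w=\theta'>0$ on $[0,1]$.

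\textbf{Step 3: Picone identity.} For $\varphi$ vanishing at the endpoints, set $\psi=\varphi/w$, which is in $W^{1,2}$ and satisfies $\psi(0)=\psi(1)=0$. Expanding $\varphi'=w'\psi+w\psi'$ and integrating by parts the terms containing $\psi\psi'$ should give
\begin{equation*}
H_\theta(\varphi,\varphi)=\int_0^1 A\,w^2(\psi')^2\,dx+\int_0^1 \psi^2\,w\,\mathcal{J}w\,dx+\Big[\psi^2 w(Aw'+Bw)\Big]_0^1 .
\end{equation*}
The middle term vanishes by Step 2, and the boundary term vanishes because $\psi(0)=\psi(1)=0$. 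Hence $H_\theta(\varphi,\varphi)=\int_0^1 a(\theta)(\theta')^2(\psi')^2\,dx\ge 0$, since $a\ge\kappa$.

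In the case $\kappa=0$ we have $a=\sin^2\theta$, which is still positive because every nontrivial solution satisfies $\theta\in(0,\pi)$ by \Cref{rmk:trivsol}. So the same argument gives the ``in particular'' claim. The hypothesis $\sin^2\theta\ge\frac{\kappa}{1+\kappa}$ then looks unnecessary on this route. If the Picone route ran into trouble, the fallback would use that hypothesis directly: integrate the cross term by parts as $(1-\kappa)\sin2\theta\,\theta'(\varphi^2)'$, substitute $\theta''$ from \eqref{eqn:el}, and bound the resulting zeroth-order coefficient from below using $\sin^2\theta\ge\kappa/(1+\kappa)$. That bound forces $\cos 2\theta$ to be controlled.

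\textbf{Main obstacle.} The delicate step is the algebra in Step 3, namely checking that the $B$-terms recombine exactly into $w\,\mathcal{J}w$ plus a total derivative. I would verify this by writing $\int (A\varphi'^2+2B\varphi\varphi'+D\varphi^2)$ with $\varphi=w\psi$ and collecting the coefficients of $\psi^2$, $\psi\psi'$ and $(\psi')^2$ separately. A second point to check is that $\theta'$ is differentiable up to the boundary, so that the boundary term makes sense; I would get this from the regularity bootstrap of \Cref{lm:reg}, which extends to the closed interval.
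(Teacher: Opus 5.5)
Your argument is correct, and it takes a genuinely different route from the paper; in fact, the paper's proof is essentially your fallback. The paper integrates the cross term $2(1-\kappa)\sin 2\theta\,\theta'\varphi\varphi'$ by parts and eliminates $\theta''$ via \eqref{eqn:el}. Using your notation $a(\theta)$ and $\lambda=L/\xi$, for $\varphi$ vanishing at the endpoints it obtains
\[
\frac{d^2}{d\varepsilon^2}I(\theta+\varepsilon\varphi)\Big|_{\varepsilon=0}=\int_0^1 a(\theta)(\varphi')^2\,dx+\int_0^1\Big((1-\kappa)(\theta')^2+\lambda^2\Big)\frac{(1+\kappa)\sin^2\theta-\kappa}{a(\theta)}\,\varphi^2\,dx+\lambda^2\E_B\int_0^1\frac{\cos\theta\,\big(\kappa-(1-\kappa)\sin^2\theta\big)}{a(\theta)}\,\varphi^2\,dx .
\]
Nonnegativity is then read off term by term. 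That is exactly why the paper needs the pointwise hypothesis $\sin^2\theta\ge\kappa/(1+\kappa)$, and $\E_B=0$, since the last term has no sign.

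You instead use the autonomy of \eqref{eqn:el} to produce the Jacobi field $\theta'$, which is positive on $[0,1]$ by \Cref{lm:mono} and \eqref{eqn:bdry}. Your Picone expansion checks out: after integrating $(\psi^2)'(Aww'+Bw^2)$ by parts, the coefficient of $\psi^2$ is exactly $w\,\mathcal{J}w$. This gives $H_\theta(\varphi,\varphi)=\int_0^1 a(\theta)(\theta')^2\big((\varphi/\theta')'\big)^2dx$.

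Your route buys considerably more:
\begin{itemize}
\item It gives strict positivity for $\varphi\neq0$.
\item It uses neither $\sin^2\theta\ge\kappa/(1+\kappa)$ nor $\E_B=0$, since autonomy and monotonicity hold for every $\E_B\ge0$.
\item Consequently \Cref{thm:stability} would hold without the restriction $\W_S^2<2/(1+\kappa)$, and \Cref{lm:sinxbd} would be unnecessary.
\end{itemize}
The paper's computation, in turn, does not depend on the monotonicity lemma. It also keeps explicit boundary terms for general $\varphi$, which is what one would need for variations not vanishing at the endpoints (the natural class for the Robin problem). Neither argument settles that case, however.

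Your worry about regularity of $\theta'$ up to the boundary can be avoided. First prove the identity for $\varphi\in C_c^\infty(0,1)$: no boundary terms arise, and only interior smoothness and $\theta'>0$ on $(0,1)$ are used. Then conclude by density, since the form is continuous on $W^{1,2}_0(0,1)$.
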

\begin{proof}
Direct calculation shows that 
\begin{eqnarray}\label{eqn:Hess}
2H_{\theta}(\varphi,\varphi)&=&2\frac{d^2}{d^2\varepsilon}I(\theta+ \varepsilon\varphi)\bigg|_{\varepsilon=0}\\ \nonumber
                                        &=&\int_0^1\left((1-\kappa)(\theta')^2\varphi^2\cos2\theta +(\sin^2\theta+\kappa\cos^2\theta)(\varphi')^2-2(\kappa-1)\theta'\varphi'\varphi\sin2\theta\right)dx\\ \nonumber 
                                        &&+\left(\frac{L}{\xi}\right)^2\int_0^1(-\cos2\theta\varphi^2+\E_B\cos\theta\varphi^2)dx+\W_S\frac{L}{\xi}\left(\cos\theta(0)\varphi^2(0)-\cos\theta(1)\varphi^2(1)\right).
\end{eqnarray}
By integration by parts, we have 
\begin{eqnarray}
&& -\int_0^1 2\theta'\varphi'\varphi\sin2\theta dx\\ \nonumber 
&=&\int_0^1\varphi^2\theta''\sin2\theta dx+2\int_0^1\varphi^2\theta'^2\cos2\theta{\color{black}dx-}\varphi^2\theta'\sin2\theta\bigg|_0^1\\ \nonumber
&=&-\int_0^1\frac{\varphi^2}{\sin^2\theta+\kappa\cos^2\theta}\left[\frac{1}{2}\sin^22\theta(1-\kappa)\theta'^2+\frac{1}{2}\sin^22\theta\left(\frac{L}{\xi}\right)^2-\left(\frac{L}{\xi}\right)^2\E_B\sin2\theta\sin\theta\right]\\ \nonumber 
&&+2\int_0^1\varphi^2\theta'^2\cos2\theta dx-\varphi^2\theta'\sin2\theta\bigg|_0^1 . 
\end{eqnarray}

Plugging back into \eqref{eqn:Hess} {\color{black} and inserting the boundary values for $\theta^\prime$ obtained from \eqref{eqn:bdry}}, we have
\begin{eqnarray}
2H_{\theta}(\varphi,\varphi)&=&\int_0^1\left(-(1-\kappa)(\theta')^2\varphi^2\cos2\theta +(\sin^2\theta+\kappa\cos^2\theta)\varphi'^2\right) dx\\ \nonumber
                                        &&+(1-\kappa)^2\int_0^1\frac{\varphi^2\theta'^2}{\sin^2\theta+\kappa\cos^2\theta}\frac{1}{2}\sin^22\theta dx-\W_S\left(\frac{L}{\xi}\right)\cos\theta\varphi^2\bigg|_0^1+(1-\kappa)\varphi^2\theta'\sin2\theta\bigg|_0^1 \\ \nonumber 
                                        &&+\left(\frac{L}{\xi}\right)^2\int_0^1\varphi^2\left(\frac{\frac{1}{2}{\color{black} (\kappa-1)}}{\sin^2\theta+\kappa\cos^2\theta}(-\sin^22\theta+{\color{black}2}\E_B\sin2\theta\sin\theta)-\cos2\theta+\E_B\cos\theta\right)dx\\ \nonumber 
               &=&\int_0^1(\sin^2\theta+\kappa\cos^2\theta)\varphi'^2dx+(1-\kappa)\int_0^1\theta'^2\varphi^2\left[-\cos2\theta+\frac{\frac{1}{2}(1-\kappa)\sin^22\theta}{\sin^2\theta+\kappa\cos^2\theta}\right]dx\\ \nonumber 
&&+\left(\frac{L}{\xi}\right)^2\int_0^1\varphi^2\left[-\cos2\theta+\frac{\frac{1}{2}(1-\kappa)\sin^22\theta}{\sin^2\theta+\kappa\cos^2\theta}\right]dx\\ \nonumber 
&&+\left(\frac{L}{\xi}\right)^2\E_B\int_0^1\varphi^2\left[\cos\theta-\frac{(1-\kappa)\sin2\theta\sin\theta}{\sin^2\theta+\kappa\cos^2\theta}\right]dx\\ \nonumber 
&&{\color{black}+\W_S\left(\frac{L}{\xi}\right){\color{black}\cos\theta}\varphi^2\left[\frac{\sin^2\theta(1-\kappa)-\kappa}{\sin^2\theta+\kappa\cos^2\theta}\right]\bigg|_0^1}\\ \nonumber
&=&\int_0^1(\sin^2\theta+\kappa\cos^2\theta)\varphi'^2dx+(1-\kappa)\int_0^1\theta'^2\varphi^2\frac{\sin^2\theta(1+\kappa)-\kappa}{\sin^2\theta+\kappa\cos^2\theta}dx \\ \nonumber
&&+\left(\frac{L}{\xi}\right)^2\int_0^1\varphi^2\frac{\sin^2\theta(1+\kappa)-\kappa}{\sin^2\theta+\kappa\cos^2\theta}dx+\left(\frac{L}{\xi}\right)^2\E_B\int_0^1\varphi^2\frac{\cos\theta(\kappa-(1-\kappa)\sin^2\theta)}{\sin^2\theta+\kappa\cos^2\theta}dx\\ \nonumber 
&&+\W_S\left(\frac{L}{\xi}\right){\color{black}\cos\theta}\varphi^2\left[\frac{\sin^2\theta(1-\kappa)-\kappa}{\sin^2\theta+\kappa\cos^2\theta}\right]\bigg|_0^1.
\end{eqnarray}
{\color{black} The} conclusion of the lemma follows. 
\end{proof}

\begin{lemma}\label{lm:sinxbd}
If $\E_B=0$ and $\W_S^2<\frac{2}{1+\kappa}$, then $\sin^2\theta(x) \ge \frac{\kappa}{1+\kappa}$ for all $x\in [0,1]$.
\end{lemma}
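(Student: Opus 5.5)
The plan is to reduce the pointwise bound to a bound at the endpoints, and then to use the algebraic constraint \eqref{eqn:idbdry} at $x=0$ together with positivity of the first-integral constant. The constant $C$ here is the one in \eqref{eqn:alg}, i.e. $C=\tilde{C}(\xi/L)^2$ with $\tilde C$ the constant of the first integral \eqref{eqn:fi}; it is the same $C$ that appears in \eqref{eqn:idbdry}.

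\emph{Step 1 (reduction to the boundary).} With $\E_B=0$, \Cref{lm:sym} gives $\cos\theta(x)=-\cos\theta(1-x)$ and $\cos\theta(1/2)=0$. \Cref{lm:mono} gives $\theta'>0$. Hence $\cos\theta$ is strictly decreasing from $\cos\theta(0)=t_3>0$ to $\cos\theta(1)=-t_3$. Therefore $\cos^2\theta(x)\le t_3^2$ on $[0,1]$, and it suffices to show
\[
s:=\cos^2\theta(0)\le \frac{1}{1+\kappa}.
\]

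\emph{Step 2 (sign of $C$).} Evaluate the first integral \eqref{eqn:fi} at $x=1/2$, where $\cos\theta=0$ and $\sin\theta=1$. This gives
\[
\tilde C=(\theta'(1/2))^2>0,
\]
where strict positivity comes from \Cref{lm:mono}. Hence $C>0$.

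\emph{Step 3 (algebraic contradiction).} Since $g(\cos\theta(0))=C>0$ with $\E_B=0$, we get
\[
\W_S^2\,\frac{1-s}{1-(1-\kappa)s}>s .
\]
Multiplying by the positive denominator, this says that $q(s):=\W_S^2(1-s)-s+(1-\kappa)s^2$ satisfies $q(s)>0$. We argue by contradiction and assume $s\in\bigl(\tfrac{1}{1+\kappa},1\bigr]$. The function $q$ is a convex quadratic in $s$, so on this interval it is bounded by its values at the two endpoints:

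1. At the right endpoint, $q(1)=-\kappa<0$.
2. At the left endpoint, a direct computation gives
\[
q\Bigl(\tfrac1{1+\kappa}\Bigr)=\frac{\kappa}{1+\kappa}\,\W_S^2-\frac{2\kappa}{(1+\kappa)^2},
\]
which is negative exactly when $\W_S^2<\tfrac{2}{1+\kappa}$. That is our hypothesis.

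By convexity, $q<0$ on the whole interval, which contradicts $q(s)>0$. Hence $s\le\frac1{1+\kappa}$, and therefore $\sin^2\theta(x)\ge 1-s\ge\frac{\kappa}{1+\kappa}$ for all $x\in[0,1]$.

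The main point needing care is Step 1. The reduction uses that a nontrivial solution exists and is the symmetric monotone one, so the endpoints carry the extreme values of $\cos^2\theta$. This is guaranteed by \Cref{lm:sym} and \Cref{lm:mono} under the standing assumption that $\theta$ is the nonconstant solution.
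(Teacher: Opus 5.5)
Your proposal is correct and follows essentially the same route as the paper. Evaluating the first integral \eqref{eqn:fi} at $x=\frac12$ gives a nonnegative (indeed positive) constant, which turns \eqref{eqn:idbdry} at $x=0$ into the quadratic inequality $(1-\kappa)y^2-(\W_S^2+1)y+\W_S^2\ge 0$ for $y=\cos^2\theta(0)$, and symmetry plus monotonicity then carry the endpoint bound to all of $[0,1]$. The only difference is that you settle the quadratic by convexity and evaluation at $y=\frac{1}{1+\kappa}$ and $y=1$ rather than by explicit roots and squaring. That is slightly cleaner: it avoids the sign bookkeeping in the squaring step and makes explicit, via $q(1)=-\kappa<0$, why the larger root can be discarded.
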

\begin{proof}
We shall show $\sin^2\theta(0)\ge \frac{\kappa}{1-\kappa}$ under the assumption $\W^2_S<\frac{2}{1+\kappa}$ {\color{black}and} $\E_B=0$. Recall {\color{black} from \eqref{eqn:id} and \eqref{eqn:bdry}} that 
$$
\begin{array}{l}
(\sin^2\theta+\kappa\cos^2\theta)\theta'^2-\left(\frac{L}{\xi}\right)^2(\cos^2\theta-2\cos\theta \E_B)\\
=\left(\frac{L}{\xi}\right)^2\left[\frac{\W^2_S\sin^2\theta(0)}{\sin^2\theta(0)+\kappa\cos^2\theta(0)}-(\cos^2\theta(0)-2\cos\theta(0) \E_B)\right].
\end{array}
$$
{\color{black}When $\E_B=0$, Lemma \ref{lm:sym} says {\color{black} that} $\theta (\frac{1}{2})=\frac{\pi}{2}${\color{black}.} {\color{black} P}lugging in $x=\frac{1}{2}$} to the left hand side, we get 
$$
\frac{\W^2_S\sin^2\theta(0)}{\sin^2\theta(0)+\kappa\cos^2\theta(0)}-\cos^2\theta(0)\geq 0.
$$
Set{\color{black}ting} $y=\cos^2\theta(0)$ in the inequality above, we get
$$
(1-\kappa)y^2-(\W^2_S+1)y+\W^2_S\geq 0.
$$
Solving this, we {\color{black} obtain}

$$
y\geq \frac{\W^2_S+1+\sqrt{({\color{black} \W_S ^2}-1)^2+4\kappa\W^2_S}}{2(1-\kappa)} ~~~\text{  or   }~~~ y\leq\frac{\W^2_S+1-\sqrt{({\color{black} \W_S ^2}-1)^2+4\kappa\W^2_S}}{2(1-\kappa)}.
$$

Therefore 
$$
\sin^2\theta(0)=1-y\geq 1-\frac{\W^2_S+1-\sqrt{({\color{black} \W_S ^2}-1)^2+4\kappa\W^2_S}}{2(1-\kappa)}.
$$
Solv{\color{black} ing}
$$
1-\frac{\W^2_S+1-\sqrt{({\color{black} \W_S ^2}-1)^2+4\kappa\W^2_S}}{2(1-\kappa)}\geq \frac{\kappa}{1+\kappa},
$$
we get 
$$
\sqrt{({\color{black} \W_S ^2}-1)^2+4\kappa\W^2_S}\geq \W^2_S+1-2\frac{1-{\color{black}\kappa}}{1+\kappa}.
$$
which is equivalent to 
$$
4\kappa\W^2_S\geq 8\frac{\kappa}{1+\kappa}(\W^2_S-1)+\left(\frac{4\kappa}{1+\kappa}\right)^2.
$$
{\color{black} Solving this for $\W_S^2$, we find that}
$
\W^2_S\leq \frac{2}{1+\kappa}. 
$
Therefore $\sin^2\theta(x)\geq\sin^2\theta(0)$ for all $x\in [0,1]$ by monotonicity of $\theta$.
\end{proof}
Theorem \ref{thm:stability} follows directly from Lemma \ref{lm:st} {\color{black} and \Cref{lm:sinxbd}.}

\section{Numerical simulations}\label{sec:num}
In this section, we consider numerical solutions of boundary value problem \eqref{eqn:el}-\eqref{eqn:bdry} at different values of the polar surface anchoring $\tilde{W}_S$, external bias electric field $\tilde{E}_B$, bend-to-splay ratio $\kappa$, and cell thickness $L/\xi$. Our results are shown in \Cref{fig:Eb-Lxi,fig:Ws-Lxi,fig:Eb-Ws,fig:Ws-Eb,fig:Eb-k,fig:avval,fig:Eb-decomp,fig:Eb-norm-decomp}.

\begin{figure}[!htb]
\centering
\begin{subfigure}{0.49\textwidth}
	\includegraphics[width=\textwidth]{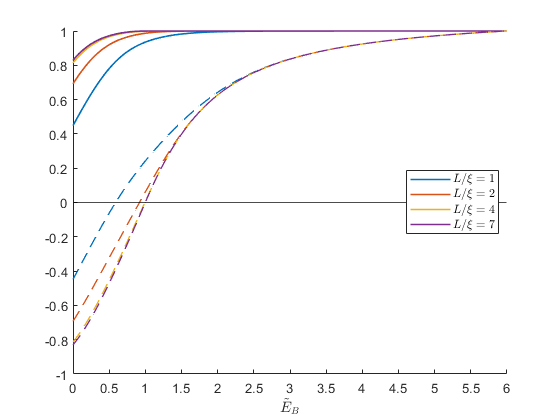}
	\caption{}
	\label{subfig:bdry_Eb-Lxi}
\end{subfigure}
\hfill
\begin{subfigure}{0.49\textwidth}
	\includegraphics[width=\textwidth]{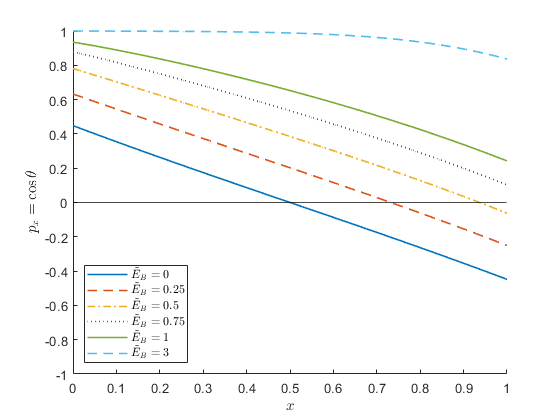}
	\caption{$L/\xi=1$}
	\label{subfig:Eb-Lxi1}
\end{subfigure}

\begin{subfigure}{0.49\textwidth}
	\includegraphics[width=\textwidth]{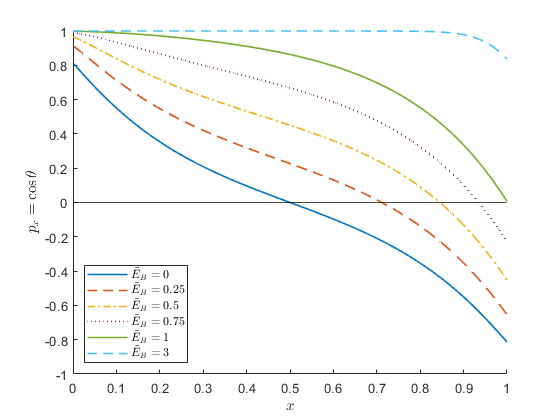}
	\caption{$L/\xi=4$}
	\label{subfig:Eb-Lxi2}
\end{subfigure}
\hfill
\begin{subfigure}{0.49\textwidth}
	\includegraphics[width=\textwidth]{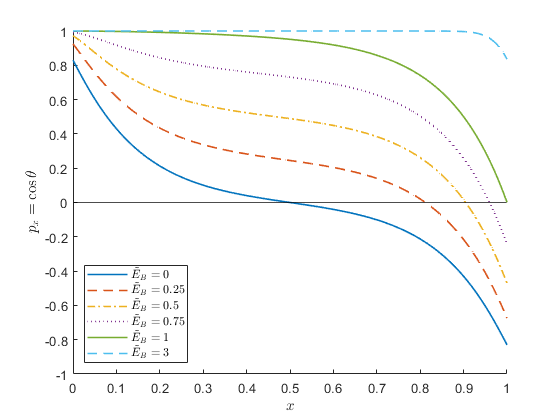}
	\caption{$L/\xi=7$}
	\label{subfig:Eb-Lxi3}
\end{subfigure}

\caption{(a) The boundary values $p_x(0)=\cos\theta(0)$ (solid) and $p_x(1)=\cos\theta(1)$ (dashed) at different cell thicknesses $L/\xi$ as a function of the bias field $\E_B$; (b), (c), (d): The spatial profiles of $p_x=\cos\theta$ at different cell thicknesses and different biases. Parameter values: $\kappa=0.2$ and $\tilde{W}_S=1$.}
\label{fig:Eb-Lxi}
\end{figure}

\begin{figure}[!tb]
\centering
\begin{subfigure}{0.49\textwidth}
	\includegraphics[width=\textwidth]{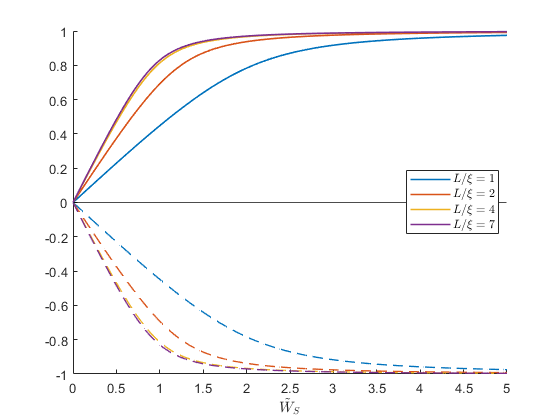}
	\caption{}
	\label{subfig:bdry_Ws-Lxi}
\end{subfigure}
\hfill
\begin{subfigure}{0.49\textwidth}
	\includegraphics[width=\textwidth]{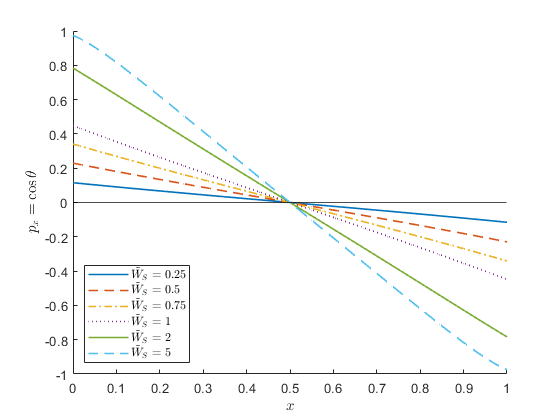}
	\caption{$L/\xi=1$}
	\label{subfig:Ws-Lxi1}
\end{subfigure}

\begin{subfigure}{0.49\textwidth}
	\includegraphics[width=\textwidth]{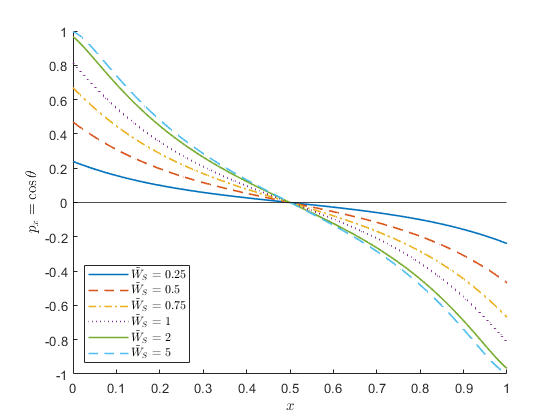}
	\caption{$L/\xi=4$}
	\label{subfig:Ws-Lxi2}
\end{subfigure}
\hfill
\begin{subfigure}{0.49\textwidth}
	\includegraphics[width=\textwidth]{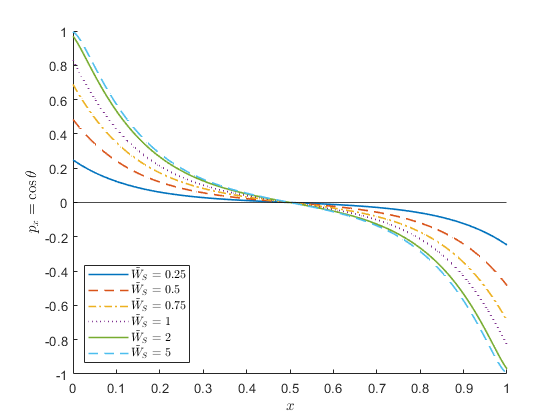}
	\caption{$L/\xi=7$}
	\label{subfig:Ws-Lxi3}
\end{subfigure}

\caption{(a) The boundary values $p_x(0)= \cos\theta(0)$ (solid) and $p_x(1)=\cos\theta(1)$ (dashed) at different cell thicknesses $L/\xi$ as a function of the anchoring strength $\tilde{W}_S$; (b), (c), (d): The spatial profiles of $p_x=\cos\theta$ at different anchoring strengths and different cell thicknesses. Parameter values: $\kappa=0.2$ and $\tilde{E}_B=0$.}
\label{fig:Ws-Lxi}
\end{figure}

To obtain numerical solutions to BVP \eqref{eqn:el}-\eqref{eqn:bdry}, we consider the $L^2[0,1]$ gradient flow ${\color{black}\frac{\partial\theta}{\partial t}} = -\frac{\delta I}{\delta \theta}$ for the nondimensional energy functional $I$ as defined in \eqref{eqn:enI}. The resulting flow is \begin{equation}\label{eqn:gd}
	\begin{cases}
		{\color{black}\dfrac{\partial\theta}{\partial t}} &= \big(\sin^2\theta +\kappa\cos^2\theta\big)\theta^{\prime\prime}+ \dfrac{1}{2}(1-\kappa)\sin2\theta \big(\theta^\prime\big)^2 +  \dfrac{1}{2}\Big(\dfrac{L}{\xi}\Big)^2\big(\sin 2\theta-2\tilde{E}_B\sin\theta\big), \\ 
		
		{\color{black} 0} &= \left[-\W_S \left(\dfrac{L}{\xi}\right)\sin\theta(x) + \left(\sin^2\theta(x)+ \kappa\cos^2\theta(x)\right)\theta^\prime(x) \right]\biggr\rvert_{x=0,1}.
	\end{cases}
\end{equation} We discretize the system \eqref{eqn:gd} using a first-order implicit scheme for the time variable (i.e., backward Euler or gradient descent), and a standard second-order finite-difference scheme for the space variable. {\color{black} The nonlinear Robin boundary conditions were implemented through a ``ghost-node" formulation consistent with the second--order explicit discretization to preserve the expected second--order convergence in space.} We solve the resulting nonlinear system of equations via Newton's method, and terminate the iteration process when {\color{black} the relative infinity norm of successive approximations fall below a prescribed tolerance $10^{-8}$.}  {\color{black} Computations were performed on successively refined meshes, with the resulting profiles exhibiting second-order numerical convergence in both $L^2[0,1]$ and $L^\infty[0,1]$ norm.} {\color{black} Our simulations are also consistent with previous calculations depicted in \cite{GCV14,Les16}.} All results are implemented in MATLAB Version: 24.2.0.2833386 (R2024b).

\begin{figure}[!tb]
\centering

\begin{subfigure}{0.49\textwidth}
	\includegraphics[width=\textwidth]{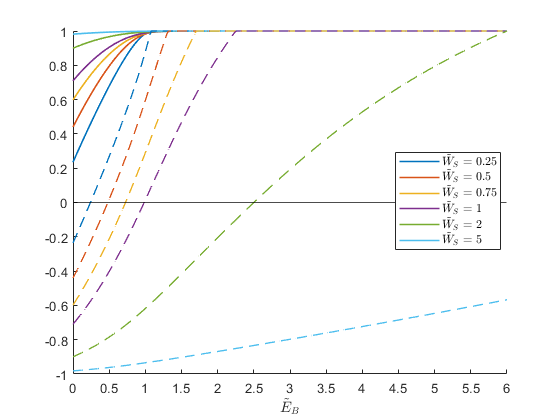}
	\caption{}
	\label{subfig:bdry_Eb-Ws}
\end{subfigure}	
\hfill
\begin{subfigure}{0.49\textwidth}
	\includegraphics[width=\textwidth]{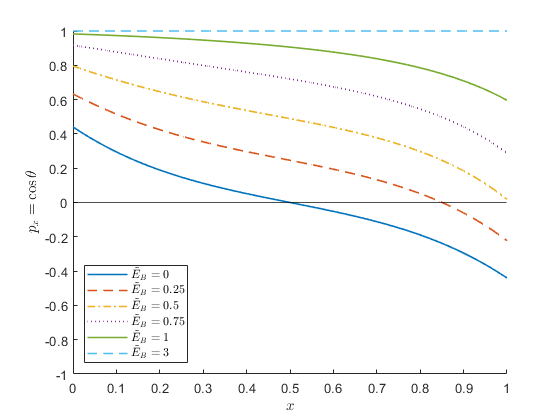}
	\caption{$\tilde{W}_S=0.5$}
	\label{subfig:Eb-Ws1}
\end{subfigure}

\begin{subfigure}{0.49\textwidth}
	\includegraphics[width=\textwidth]{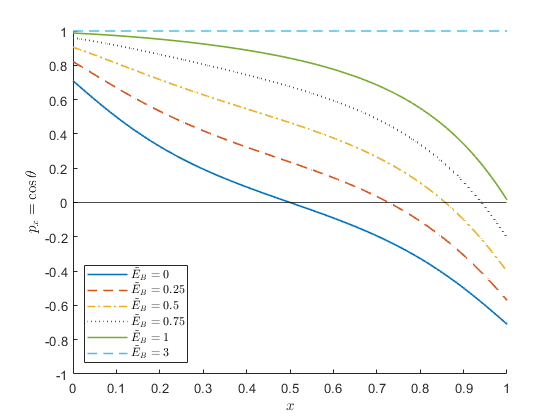}
	\caption{$\tilde{W}_S=1$}
	\label{subfig:Eb-Ws2}
\end{subfigure}
\hfill
\begin{subfigure}{0.49\textwidth}
	\includegraphics[width=\textwidth]{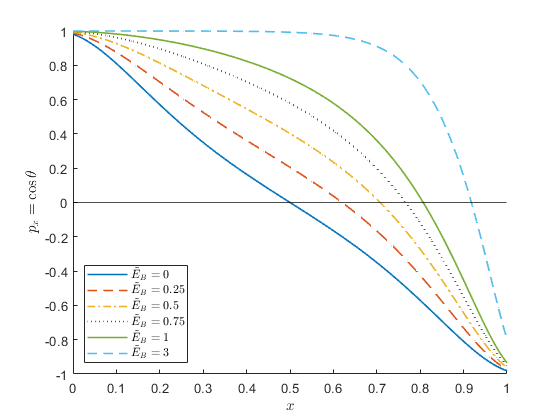}
	\caption{$\tilde{W}_S=5$}
	\label{subfig:Eb-Ws3}
\end{subfigure}

\caption{(a) The boundary values $p_x(0)= \cos\theta(0)$ (solid) and $p_x(1)=\cos\theta(1)$ (dashed) at different anchoring stengths $\W_S$ as a function of the bias field $\E_B$; (b), (c), (d): The spatial profiles of $p_x= \cos\theta$ at different biases $\tilde{E}_B$ at (b) weak [$\tilde{W}_S=0.5$], (c) intermediate [$\tilde{W}_S=1$], and (d) strong [$\tilde{W}_S=5$] anchorings. Parameter values: $\kappa=0.8$ and $L/\xi=4$.}
\label{fig:Eb-Ws}
\end{figure}


\Cref{fig:Ws-Lxi,fig:Eb-Lxi,fig:Eb-Ws,fig:Ws-Eb,fig:Eb-k} depict the boundary values $p_x(0)$ and $p_x(1)$ as well as representative spatial profiles $p_x=\cos\theta$ at different values of the bend-to-splay ratio $\kappa$, cell thickness $L/\xi$, applied bias $\E_B$, and anchoring strength $\W_S$. {\color{black} In \Cref{fig:Ws-Lxi,fig:Eb-Lxi}, we choose the representatives $L/\xi=1$, $L/\xi=4$, and $L/\xi=7$ to illustrate the individual effects of the anchoring strength and applied bias at different length scales. In view of \eqref{eqn:xi}, $L/\xi$ increases as the magnitude of the ferroelectric polarization becomes increasingly large in comparison to the elastic defects; therefore, we may interpret the transition from $L/\xi=1$ to $L/\xi=7$ as a transition from an elastically-dominant regime to a more electrostatically-dominant regime. In \Cref{fig:Eb-Ws,fig:Ws-Eb,fig:Eb-k}, we choose an intermediate value $L/\xi=4$ for the cell thickness to demonstrate the competition between the surface anchoring and applied bias. Since Gornik, \v Cepi\v c and Vaupoti\v{c} \cite{GCV14} define weak anchoring as $p_x(0)<0.5$ and strong anchoring as $p_x(0)>0.9$ when $\E_B=0$, we choose the representatives $\W_S=0.5$, $\W_S=1$, and $\W_S=5$ in \Cref{fig:Eb-Ws} to include profiles from each case. Similarly, we choose the values $\E_B=0$, $\E_B=0.5$, and $\E_B=1$ in \Cref{fig:Ws-Eb} to depict spatial profiles in the presence of zero bias, moderate bias, and strong bias, respectively.} {\color{black} \Cref{fig:Eb-k} is included to illustrate the effect of the bend-to-splay ratio $\kappa$ in \Cref{lm:theta1}.}

\begin{figure}[!tb]
\centering

\begin{subfigure}{0.49\textwidth}
	\includegraphics[width=\textwidth]{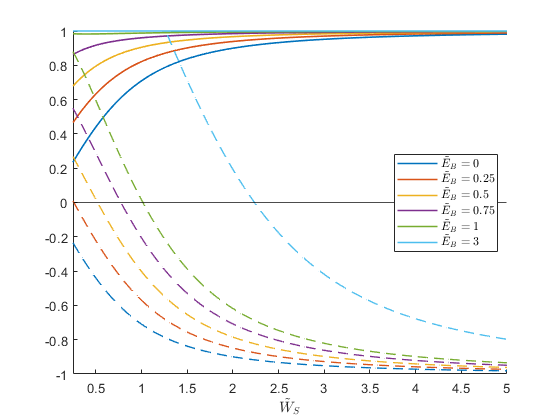}
	\caption{}
	\label{subfig:bdry_Ws-Eb}
\end{subfigure}
\hfill
\begin{subfigure}{0.49\textwidth}
	\includegraphics[width=\textwidth]{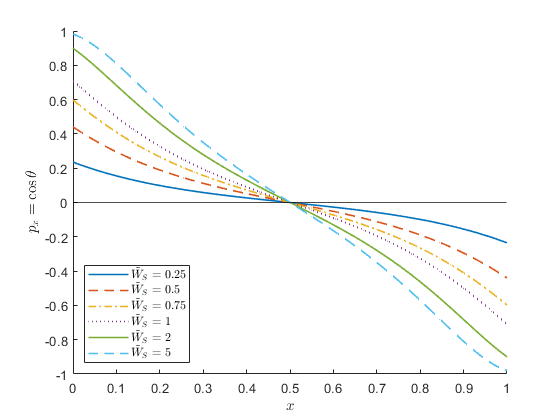}
	\caption{$\tilde{E}_B=0$}
	\label{subfig:Ws-Eb1}
\end{subfigure}

\begin{subfigure}{0.49\textwidth}
	\includegraphics[width=\textwidth]{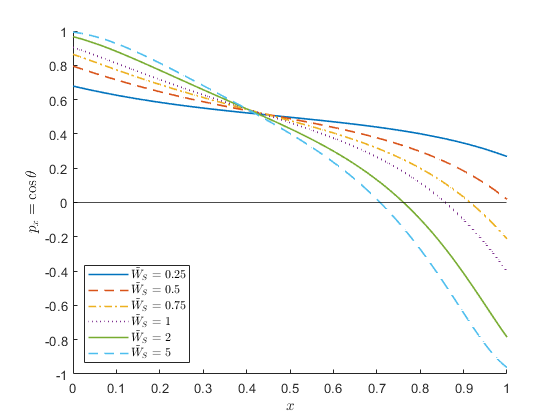}
	\caption{$\tilde{E}_B=0.5$}
	\label{subfig:Ws-Eb2}
\end{subfigure}
\hfill
\begin{subfigure}{0.49\textwidth}
	\includegraphics[width=\textwidth]{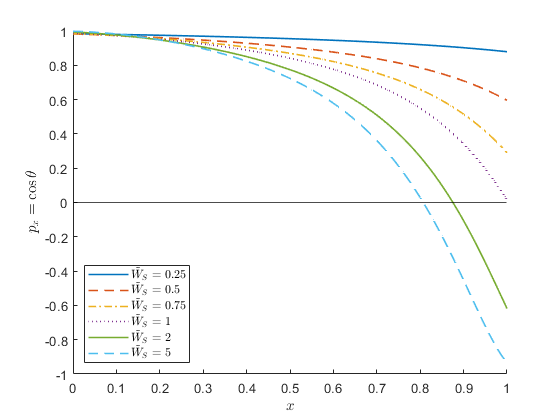}
	\caption{$\tilde{E}_B=1$}
	\label{subfig:Ws-Eb3}
\end{subfigure}

\caption{(a): The boundary values $p_x(0)= \cos\theta(0)$ (solid) and $p_x(1)=\cos\theta(1)$ (dashed) at different applied biases $\E_B$ as a function of the anchoring strength $\W_S$ [note: the $\W_S$-axis begins at $\W_S=0.25$]; (b), (c), (d): The spatial profiles of $p_x=\cos\theta$ at different anchoring strengths and different biases. Parameter values: $\kappa=0.8$, $L/\xi=4$.}
\label{fig:Ws-Eb}
\end{figure}

\begin{figure}[!tb]
\centering

\begin{subfigure}{0.49\textwidth}
	\includegraphics[width=\textwidth]{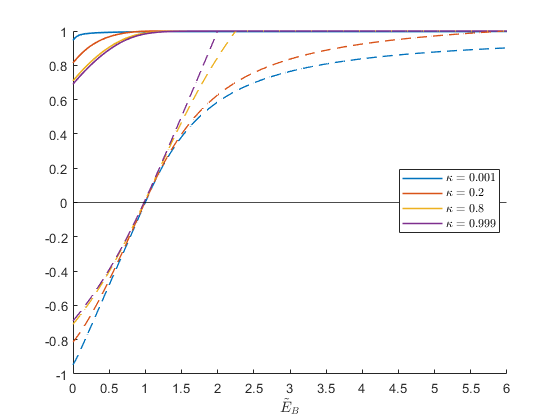}
	\caption{}
	\label{subfig:bdry_Eb-k}
\end{subfigure}
\hfill
\begin{subfigure}{0.49\textwidth}
	\includegraphics[width=\textwidth]{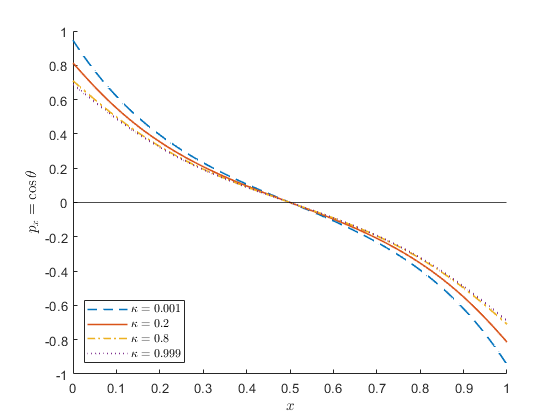}
	\caption{$\tilde{E}_B=0$}
	\label{subfig:Eb-k1}
\end{subfigure}

\begin{subfigure}{0.49\textwidth}
	\includegraphics[width=\textwidth]{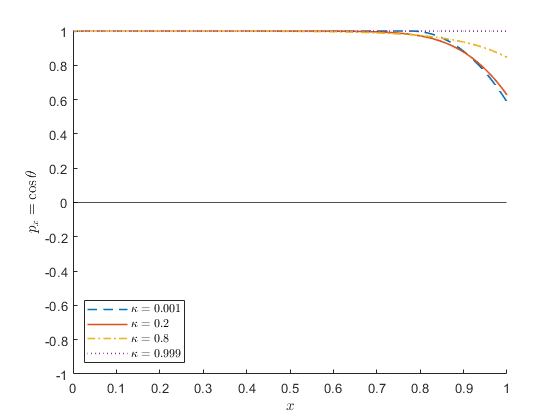}
	\caption{$\tilde{E}_B=2$}
	\label{subfig:Eb-k2}
\end{subfigure}
\hfill
\begin{subfigure}{0.49\textwidth}
	\includegraphics[width=\textwidth]{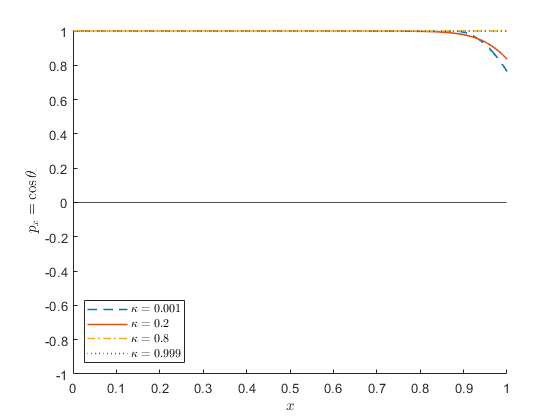}
	\caption{$\tilde{E}_B=3$}
	\label{subfig:Eb-k3}
\end{subfigure}

\caption{(a): The boundary values $p_x(0)= \cos\theta(0)$ (solid) and $p_x(1)=\cos\theta(1)$ (dashed) at different bend-to-splay ratios $\kappa$ as a function of the applied bias $\E_B$; (b), (c), (d): The spatial profiles of $p_x=\cos\theta$ at different ratios and different biases. Parameter values: $\W_S=1$, $L/\xi=4$.}
\label{fig:Eb-k}
\end{figure}

\Cref{fig:Ws-Lxi,fig:Eb-Lxi,fig:Eb-Ws,fig:Ws-Eb,fig:Eb-k} provide insight on several results discussed in \Cref{sec:lproof}. First, since $\theta'>0$ {\color{black} for any nontrivial solution $\theta$} according to \Cref{lm:mono}, it follows that \begin{equation*}
	p_x' = (\cos\theta)' = -(\sin \theta)\theta' < 0 
\end{equation*} for all $\theta\in (0,\pi)$, so that all {\color{black} nonconstant} profiles are monotonically decreasing in $x$. In the case of zero bias, we can also see that the profiles exhibit antisymmetry about the midpoint of the cell $x=\frac{1}{2}$ as discussed in \Cref{lm:sym} (\Cref{fig:Ws-Lxi,subfig:Ws-Eb1,subfig:Eb-k1}). \Cref{fig:Eb-Ws,fig:Ws-Eb} in particular depict the effect of the anchoring strength and applied bias on the {\color{black} polar director configuration}. As expected from {\color{black} \Cref{lm:lxipx0} and} \Cref{lm:Wsasymp} at least in the case of zero bias, increasing the {\color{black} cell thickness $L/\xi$ or} surface anchoring strength $\W_S$ corresponds to an increased preference of the equilibrium profile toward the polar anchoring condition $p_x(0)\approx 1$, $p_x(1)\approx -1$ (\Cref{subfig:bdry_Ws-Lxi,subfig:bdry_Ws-Eb}). On the other hand, {\color{black}  {\color{black} \Cref{fig:Eb-Ws,fig:Ws-Eb,fig:Eb-k} also illustrate that increasing the bias field increasingly dominates the bulk profile and pushes the polar director toward the completely bias-aligned state $p_x\equiv 1$, confirming our conclusion in Lemma \ref{lm:theta1} that $p_x(1)>0$ when $\E_B> \frac{1+\W_S^2}{2}$ and} $p_x\equiv 1$ when $\E_B>1+\frac{\W_S^2}{\kappa}$. However,} {\color{black} the authors were unable to establish a rigorous conclusion regarding the effect of $\E_B$ on $p_x(1)$ {\color{black} in \Cref{lm:Eeff}} because \eqref{eqn:dbd} gives no information about the sign of $\frac{\mathrm{d}p_x(1)}{\mathrm{d}\E_B}$ when $p_x(0)>\E_B$. Still, we believe that $\frac{\mathrm{d}p_x(1)}{\mathrm{d}\E_B}>0$ for all relevant choices of the parameters $\W_S$, $L/\xi$, and $\kappa$ (\Cref{subfig:bdry_Eb-Lxi,subfig:bdry_Eb-Ws,subfig:bdry_Eb-k}), suggesting that the polar director continuously realigns in the direction of the applied field throughout the cell.}

{\color{black} \Cref{fig:Eb-Ws,fig:Ws-Eb,fig:Eb-k} further examine the sharpness of the assumptions in \Cref{lm:bvbigWs,lm:theta1,lm:snd}}. {\color{black} In \Cref{fig:Eb-Ws,fig:Ws-Eb}, we choose $\kappa=0.8$ primarily to illustrate the relevance of $\kappa$ and $\W_S$ in the assumptions of \Cref{lm:theta1,lm:snd}. First, we note that the assumptions in \Cref{lm:bvbigWs,lm:theta1} are not optimal.} Indeed, for $\kappa=0.8$ and $L/\xi=4$, direct calculation shows that \Cref{lm:bvbigWs} requires $\W_S>0.765$ when $\E_B=0.25$, $\W_S>0.839$ when $\E_B=0.5$, and $\W_S>1.061$ when $\E_B=0.75$. Yet, when $\E_B=0.25$ and $\W_S=0.5$, we find \begin{equation*}
	p_x(0)\approx 0.632 > 0.25 > 0 > p_x(1) = -0.222,
\end{equation*} when $\E_B=0.5$ and $\W_S=0.75$, we find \begin{equation*}
	p_x(0)\approx 0.864 > 0.5 > 0 > p_x(1) \approx -0.212,
\end{equation*} and when $\E_B=0.75$ and $\W_S=1$, we find \begin{equation*}
	p_x(0)\approx 0.960 > 0.75 > 0 > p_x(1)\approx -0.206
\end{equation*} (\Cref{subfig:Eb-Ws1,subfig:Eb-Ws2,subfig:Ws-Eb2}). Similarly {\color{black} the $p_x(1)>0$ assertion in} \Cref{lm:theta1} requires $\E_B>0.625$ when $\W_S=0.5$, but $p_x(1)\approx 0.019$ when $\W_S=0.5$ and $\E_B=0.5$ (\Cref{subfig:Eb-Ws1,subfig:Eb-Ws2,subfig:Ws-Eb2}). {\color{black} Yet, \Cref{fig:Eb-k} suggests that both estimates in \Cref{lm:theta1} are nearly optimal. Indeed, \Cref{lm:theta1} implies $p_x(1)>0$ when $\E_B>\frac{1+\W_S^2}{2}$; correspondingly, all $p_x(1)$ curves in \Cref{subfig:bdry_Eb-k} cross the $y$-axis near $\E_B=1$. Moreover, in order to conclude $p_x\equiv 1$ in the case $\W_S=1$, \Cref{lm:theta1} requires $\E_B> 2.001$ when $\kappa=0.999$, $\E_B> 2.25$ when $\kappa=0.8$, $\E_B>6$ when $\kappa=0.2$, all of which are closely supported by \Cref{subfig:bdry_Eb-k}.} {\color{black} \Cref{subfig:Eb-Ws3,subfig:Ws-Eb3} also illustrate the role of the $\W_S$ and $\kappa$ in \Cref{lm:snd}.} Noting that $\frac{1}{2\sqrt{2}} \sqrt{\kappa+2+\sqrt{\kappa^2+8\kappa}}\approx {\color{black} 0.826}$ for $\kappa=0.8$, {when $\E_B=1$, \color{black} \Cref{lm:snd} requires $\W_S<1.101$ in order to guarantee $p_x'' <0$ on $[0,1]$. Indeed, with this choice of parameters, a standard second-order finite-difference approximation shows that $p_x''(1)\approx -15.948$ when $\W_S=1$ and $p_x''(1)\approx 7.948$ when $\W_S=2$, indicating the existence of an inflection point when the surface anchoring is sufficiently strong (\Cref{lm:inflpt})}

\begin{figure}[!tb]
\centering

\begin{subfigure}{0.49\textwidth}
	\includegraphics[width=\textwidth]{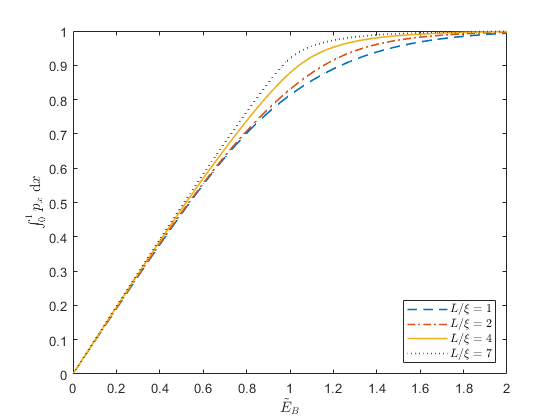}
	\caption{$\W_S=0.5$}
\end{subfigure}
\hfill
\begin{subfigure}{0.49\textwidth}
	\includegraphics[width=\textwidth]{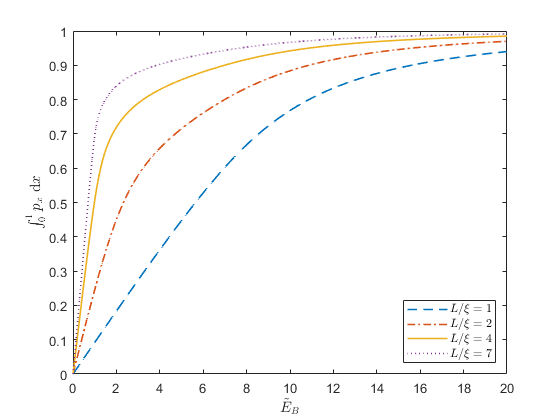}
	\caption{$\W_S=5$}
\end{subfigure}

\begin{subfigure}{0.49\textwidth}
	\includegraphics[width=\textwidth]{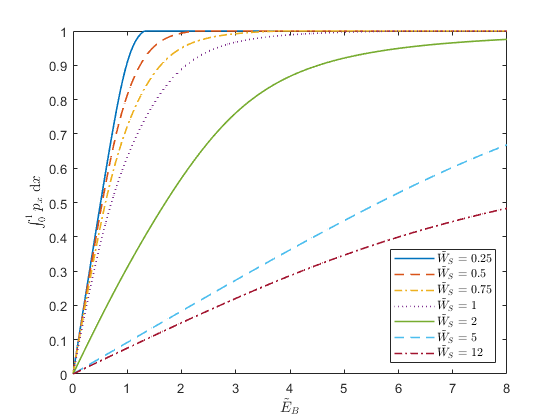}
	\caption{$L/\xi=1$}
\end{subfigure}
\hfill
\begin{subfigure}{0.49\textwidth}
	\includegraphics[width=\textwidth]{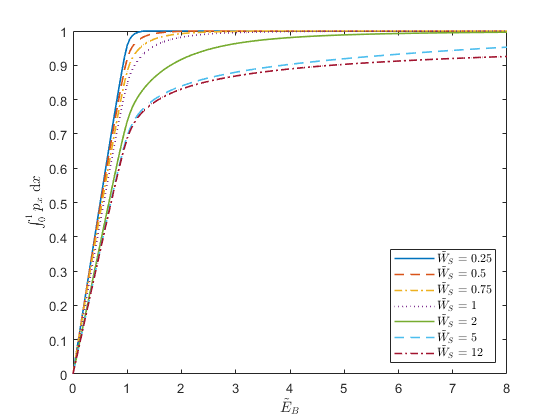}
	\caption{$L/\xi=7$}
\end{subfigure}

\caption{The average value $\langle p_x \rangle$ of the $x$-component of the polar director across the cell as a function of the bias field $\tilde{E}_B$. (a), (b): The average value at different cell thicknesses $L/\xi$; (c), (d): The average value at different anchorings $\tilde{W}_S$. In all cases $\kappa=0.2$.}
\label{fig:avval}
\end{figure}

\Cref{fig:avval} shows the average value $\langle p_x \rangle$ of the $x$-component of the equilibrium polar director as a function of the applied bias at different cell thicknesses and surface anchoring strengths. As anticipated from \Cref{lm:Eeff}, in all cases we observe that $\langle p_x \rangle \to 1$ as $\E_B\to +\infty$, illustrating the realignment of the molecules toward the applied bias as the bias becomes dominant.

\begin{figure}[!tb]
\centering

\begin{subfigure}{0.49\textwidth}
	\includegraphics[width=\textwidth]{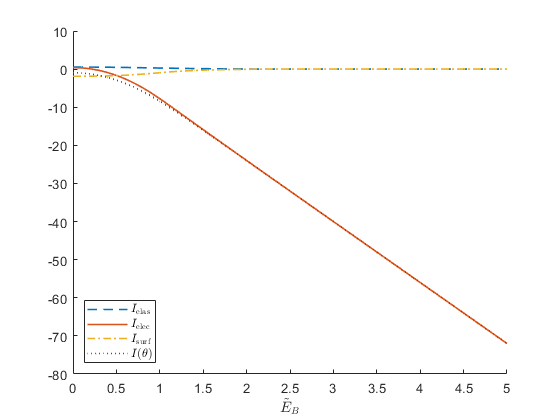}
	\caption{$\tilde{W}_S=0.5$, $L/\xi=4$}
\end{subfigure}
\hfill
\begin{subfigure}{0.49\textwidth}
	\includegraphics[width=\textwidth]{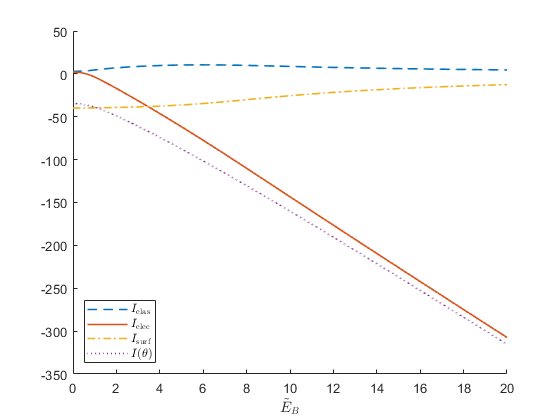}
	\caption{$\tilde{W}_S=5$, $L/\xi=4$}
\end{subfigure}

\begin{subfigure}{0.49\textwidth}
	\includegraphics[width=\textwidth]{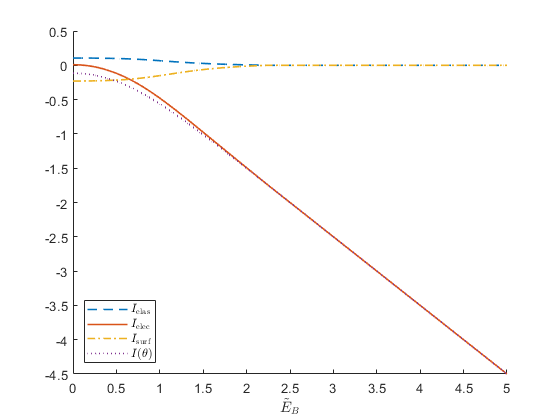}
	\caption{$\tilde{W}_S=0.5$, $L/\xi=1$}
\end{subfigure}
\hfill
\begin{subfigure}{0.49\textwidth}
	\includegraphics[width=\textwidth]{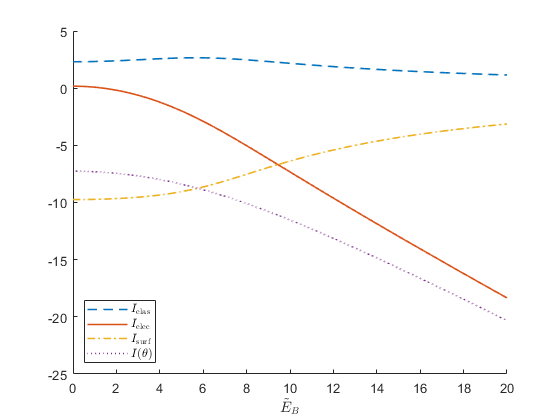}
	\caption{$\tilde{W}_S=5$, $L/\xi=1$}
\end{subfigure}

\caption{Decomposition of the energy $I(\theta)$ into elastic ($I_\text{elas}$), electrostatic ($I_\text{elec}$), and surface ($I_\text{surf}$) contributions as a function of the bias field $\tilde{E}_B$. In all cases $\kappa=0.2$.}
\label{fig:Eb-decomp}
\end{figure}

The {\color{black} above observations} suggest an underlying physical competition between the cell thickness, surface anchoring strength, and applied bias. First, we can see in \Cref{subfig:Eb-Lxi1,subfig:Ws-Lxi1} that the $x$-component of the equilibrium polar director appears linear for the small thickness $L/\xi=1$ and then exhibits a greater spatial variation with sharp transitions at the boundaries at the higher thickness $L/\xi=7$. This behavior again suggests a transition from elastic-driven boundary effects to more electrostatic-driven bulk effects as $L/\xi$ increases. The other primary competition occurs between the surface anchoring strength $\W_S$ and the applied bias $\E_B$, as depicted in the spatial profiles in \Cref{fig:Eb-Ws,fig:Ws-Eb,fig:Eb-k}. At the higher anchoring strength $\W_S=5$, the equilibrium polar director shows a clear preference toward the polar anchoring condition $p_x(0)\approx 1$, $p_x(1)\approx -1$ in {\color{black} each of} the cases of zero, moderate, and strong bias (\Cref{subfig:Ws-Eb1,subfig:Ws-Eb2,subfig:Ws-Eb3}). Yet, we still see that increasing the applied bias appears to lead to a stronger preference toward the electrostatically favored state $p_x\equiv 1$ in both bulk alignment (\Cref{subfig:Ws-Eb1,subfig:Ws-Eb2,subfig:Ws-Eb3}) and average value (\Cref{fig:avval}). This again indicates that the equilibrium polar director configuration is driven primarily by the bulk electrostatic contributions at high bias. 

\begin{figure}[!tb]
\centering

\begin{subfigure}{0.49\textwidth}
	\includegraphics[width=\textwidth]{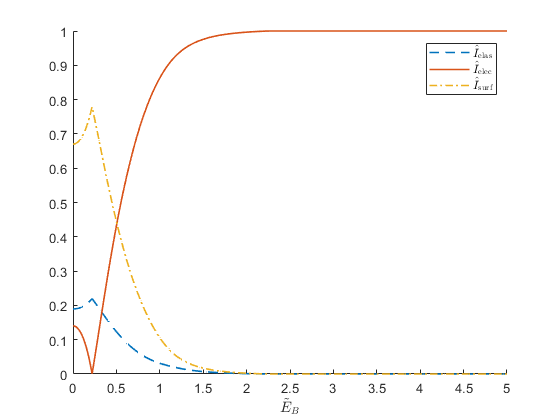}
	\caption{$\tilde{W}_S=0.5$, $L/\xi=4$}
	\label{subfig:norma}
\end{subfigure}
\hfill
\begin{subfigure}{0.49\textwidth}
	\includegraphics[width=\textwidth]{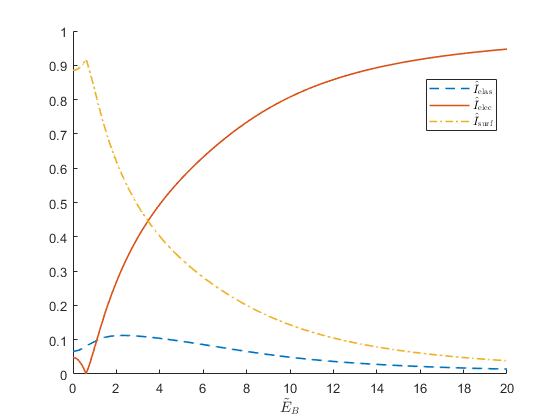}
	\caption{$\tilde{W}_S=5$, $L/\xi=4$}
	\label{subfig:normb}
\end{subfigure}

\begin{subfigure}{0.49\textwidth}
	\includegraphics[width=\textwidth]{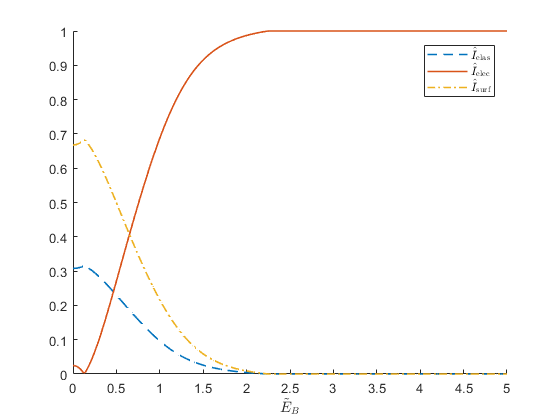}
	\caption{$\tilde{W}_S=0.5$, $L/\xi=1$}
	\label{subfig:normc}
\end{subfigure}
\hfill
\begin{subfigure}{0.49\textwidth}
	\includegraphics[width=\textwidth]{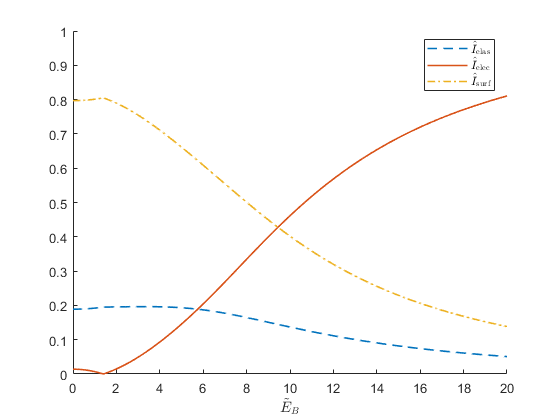}
	\caption{$\tilde{W}_S=5$, $L/\xi=1$}
	\label{subfig:normd}
\end{subfigure}

\caption{The normalized contributions $\hat{I}_\text{elas}$, $\hat{I}_\text{elec}$, and $\hat{I}_\text{surf}$ as a function of the bias field $\tilde{E}_B$. (a) and (b): Moderate thickness $L/\xi=4$, (c) and (d): Thin cells $L/\xi = 1$. In all cases $\kappa=0.2$.}
\label{fig:Eb-norm-decomp}
\end{figure}

Motivated by this apparent transition from boundary elastic to bulk electrostatic dominating effects (\Cref{fig:Eb-Ws,fig:Ws-Eb,fig:Eb-k,fig:avval}), we consider the energetic mechanisms underlying equilibrium polar director structure at high bias. Specifically, we consider the energy $I(\theta)$ as defined in \eqref{eqn:enI} and decompose $I(\theta)$ into the sum of elastic $I_\text{elas}$, electrostatic $I_\text{elec}$, and surface $I_\text{surf}$ contributions defined as follows:  \begin{equation}\label{eqn:Ielas}
	I_\text{elas} = \int_0^1 \frac{1}{2}(\sin^2\theta+\kappa\cos^2\theta)(\theta^\prime)^2 dx, 
\end{equation} \begin{equation}\label{eqn:Ielec}
	I_\text{elec} = \int_0^1 \frac{1}{2}\left(\frac{L}{\xi}\right)^2(\cos^2\theta-2\tilde{E}_B\cos\theta) dx,
\end{equation} and \begin{equation}\label{eqn:Isurf}
	I_\text{surf} = -\W_S\left(\frac{L}{\xi}\right)\cos\theta(0) + \W_S\left(\frac{L}{\xi}\right)\cos\theta(1).
\end{equation} Notice also that \begin{equation*}
	I(\theta) = I_\mathrm{elas}+ I_\mathrm{elec}+ I_\mathrm{surf}.
\end{equation*} We then define the normalized energy contributions $\hat{I}_\text{elas}$, $\hat{I}_\text{elec}$, and $\hat{I}_\text{surf}$ by \begin{equation}\label{eqn:contrib}
	\hat{I}_\text{elas} = \frac{|I_\text{elas}|}{I_\text{mag}},\quad \hat{I}_\text{elec} = \frac{|I_\text{elec}|}{I_\text{mag}},\quad \hat{I}_\text{surf} = \frac{|I_\text{surf}|}{I_\text{mag}},
\end{equation} where $I_\text{mag}= |I_\text{elas}|+|I_\text{elec}|+|I_\text{surf}|$.

\Cref{fig:Eb-decomp} shows a plot of the contributions $I_\text{elas}$, $I_\text{elec}$, and $I_\text{surf}$ along with the total energy $I(\theta)$ as a function of the bias field $\E_B$, and \Cref{fig:Eb-norm-decomp} shows a plot of the normalized contributions $\hat{I}_\text{elas}$, $\hat{I}_\text{elec}$, and $\hat{I}_\text{surf}$. For small bias, the surface term dominates, indicating that the configuration of the polar director is driven primarily by boundary effects. However, as the bias increases, a transition occurs as the electrostatic term becomes dominant. For sufficiently large bias $\E_B$, the energy is almost entirely determined by the electrostatic term, and the elastic and surface contributions become negligible (\Cref{fig:Eb-norm-decomp}). We note that this transition to an electrostatically-dominated energy appears to require higher values of the applied bias $\E_B$ at larger anchorings $\W_S>\!>1$ and for thinner cells $L/\xi \approx 1$. This behavior can be interpreted as a requirement that the applied bias must be large enough to overcome greater surface and elastic contributions in thin cells ($L/\xi\approx 1$) with large anchoring.

This transition toward electrostatically-dominating energetics can be understood variationally in the limit as $\E_B\to +\infty$.

\begin{proposition}\label{p:var}
	Let $\theta\in W^{1,p}(0,1)$ be such that $I(\theta)\le I(0)$, where $I$ is as defined in \eqref{eqn:enI}. Then there exists a constant $C=C(\W_S,\frac{L}{\xi})>0$, independent of $\E_B$, such that \begin{equation}\label{eqn:varineq}
		I_\text{elas} + \E_B\left(\frac{L}{\xi}\right)^2 \int_0^1 (1-\cos\theta) dx \le C.
	\end{equation} In particular, as $\E_B\to +\infty$, we have \begin{equation}\label{eqn:varpx}
		\langle p_x \rangle \to 1
	\end{equation} and \begin{equation}\label{eqn:varint}
		\hat{I}_\text{elec}\to 1,
	\end{equation} where $\langle p_x \rangle$ is as defined in \eqref{eqn:avval} and $\hat{I}_\text{elec}$ is as defined in \eqref{eqn:contrib}.
\end{proposition}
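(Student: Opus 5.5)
The plan is to compare $I(\theta)$ with $I(0)$ term by term. For $\theta\equiv 0$ the elastic term vanishes. The electrostatic term equals $\frac12(L/\xi)^2(1-2\E_B)$. The surface term is $-\W_S(L/\xi)+\W_S(L/\xi)=0$. Hence
\[
I(0)=\tfrac12\left(\tfrac{L}{\xi}\right)^2(1-2\E_B).
\]
We rewrite the electrostatic density of $\theta$ by adding and subtracting $2\E_B$:
\[
\cos^2\theta-2\E_B\cos\theta=\cos^2\theta-2\E_B+2\E_B(1-\cos\theta).
\]
Substituting this into $I(\theta)\le I(0)$, the $-\E_B(L/\xi)^2$ terms cancel on both sides, leaving
\[
I_\text{elas}+\E_B\left(\tfrac{L}{\xi}\right)^2\int_0^1(1-\cos\theta)\,dx\le \tfrac12\left(\tfrac{L}{\xi}\right)^2\int_0^1(1-\cos^2\theta)\,dx-I_\text{surf}.
\]
Since $|\cos\theta|\le 1$, the right-hand side is at most $\frac12(L/\xi)^2+2\W_S(L/\xi)$. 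This gives \eqref{eqn:varineq} with $C=\frac12(L/\xi)^2+2\W_S(L/\xi)$, which is independent of $\E_B$. Both terms on the left are nonnegative, because $\sin^2\theta+\kappa\cos^2\theta>0$ and $1-\cos\theta\ge0$.

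From \eqref{eqn:varineq} we get $0\le 1-\langle p_x\rangle\le C(\xi/L)^2/\E_B\to0$, which proves \eqref{eqn:varpx}. We also get $I_\text{elas}\le C$ uniformly in $\E_B$, and $|I_\text{surf}|\le 2\W_S L/\xi$ holds trivially.

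For \eqref{eqn:varint} it suffices to show $|I_\text{elec}|\to\infty$ linearly in $\E_B$. We write
\[
I_\text{elec}=\tfrac12\left(\tfrac{L}{\xi}\right)^2\int_0^1\cos^2\theta\,dx-\E_B\left(\tfrac{L}{\xi}\right)^2\langle p_x\rangle.
\]
The first term is bounded. Since $\langle p_x\rangle\to1$, we have $|I_\text{elec}|\ge \E_B(L/\xi)^2\langle p_x\rangle-\frac12(L/\xi)^2\sim \E_B(L/\xi)^2$. Therefore $\hat I_\text{elas}+\hat I_\text{surf}\le (C+2\W_S L/\xi)/|I_\text{elec}|\to0$, and consequently $\hat I_\text{elec}\to1$.

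The main care point is that $\theta$ may depend on $\E_B$. This is harmless, because every bound above uses only the energy comparison $I(\theta)\le I(0)$ and never a particular profile. One must also note that $I(\theta)$ is well defined for $\theta\in W^{1,p}$ (take $p\ge2$ or allow $I_\text{elas}=+\infty$, which the inequality then excludes). The statement applies in particular to minimizers, hence to the equilibrium solutions in the large-bias regime.
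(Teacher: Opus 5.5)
Your proposal is correct and follows essentially the same route as the paper. You compute $I(0)=\frac12(L/\xi)^2(1-2\E_B)$, cancel the $-\E_B(L/\xi)^2$ terms, bound the leftover $\int_0^1(1-\cos^2\theta)\,dx$ and surface contributions by $\E_B$-independent constants, and then deduce $\langle p_x\rangle\to1$ and $\hat I_\text{elec}\to1$ from these uniform bounds. The only differences are that your version is slightly more explicit, giving the constant $C=\frac12(L/\xi)^2+2\W_S(L/\xi)$ and a linear lower bound on $|I_\text{elec}|$.
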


\begin{proof}
	First, notice that \begin{equation}\label{eqn:surfbd}
		I_\text{surf} = \big(\cos\theta(1)-\cos\theta(0)\big)\W_S\left(\frac{L}{\xi}\right) = \alpha\W_S\left(\frac{L}{\xi}\right)
	\end{equation} for some $\alpha\in \R$ such that $|\alpha|\le 2$. 
	
	Next, observe that, for any director $\theta$ such that $I(\theta)\le I(0)$, we have \begin{equation}\label{eqn:Icomp}
	\begin{split}
		I(\theta) &\le I(0) = \frac{1}{2}\left(\frac{L}{\xi}\right)^2(1-2\E_B).
	\end{split}
	\end{equation} Expanding $I(\theta)$ in terms of $I_\text{elas}$, $I_\text{elec}$, and $I_\text{surf}$ gives \begin{equation}\label{}
	\begin{split}
		I(\theta) &= I_\text{elas} + I_\text{elec} + I_\text{surf} \\
			&= I_\text{elas} + \frac{1}{2}\left(\frac{L}{\xi}\right)^2\int_0^1 \big(\cos^2\theta - 2\E_B\cos\theta\big) dx + \alpha\W_S\left(\frac{L}{\xi}\right)\\
			&= I_\text{elas} -\E_B\left(\frac{L}{\xi}\right)^2\int_0^1 \cos\theta dx + \frac{\beta}{2}\left(\frac{L}{\xi}\right)^2 + \alpha\W_S\left(\frac{L}{\xi}\right),
	\end{split}
	\end{equation} where $\beta=\int_0^1 \cos^2\theta dx\in [0,1]$. Inserting this back into \eqref{eqn:Icomp} and rearranging shows \begin{equation*}
		I_\text{elas}+ \E_B\left(\frac{L}{\xi}\right)^2\int (1-\cos\theta)dx \le \frac{1}{2}\left(\frac{L}{\xi}\right)^2(1-\beta) -\alpha\W_S\left(\frac{L}{\xi}\right).
	\end{equation*} Since $I_\mathrm{elas}\ge 0$ and $|\alpha|,|\beta|<+\infty$, this proves \eqref{eqn:varineq}.
	
	To show \eqref{eqn:varpx}, since $I_\text{elas}\ge 0$, \eqref{eqn:varineq} implies \begin{equation*}
		\int_0^1 (1-\cos\theta)d x \le \frac{C\left(\frac{\xi}{L}\right)^2}{\E_B}
	\end{equation*} for some constant $C>0$ independent of $\E_B$. Taking the limit as $\E_B\to +\infty$ on both sides proves \eqref{eqn:varpx}.
	
	Lastly, \eqref{eqn:surfbd} and \eqref{eqn:varineq} together imply that \begin{equation*}
		|I_\text{elas}|+|I_\text{surf}|\le \tilde{C}
	\end{equation*} for some constant $\tilde{C}>0$ independent of $\E_B$. On the other hand, \eqref{eqn:varpx} implies that \begin{equation*}
		\lim_{\E_B\to +\infty} \frac{1}{2}\left(\frac{L}{\xi}\right)^2\int_0^1 (\cos^2\theta - 2\E_B\cos \theta) dx = \frac{\beta}{2}\left(\frac{L}{\xi}\right)^2 - \lim_{\E_B\to +\infty} \E_B\int_0^1 \cos\theta d x = -\infty,
	\end{equation*} where again $0\le \beta \le 1$. It then follows \begin{equation*}
		\lim_{\E_B\to +\infty} \frac{|I_\text{elec}|}{|I_\text{elas}|+|I_\text{elec}|+|I_\text{surf}|}=1.
	\end{equation*} This proves \eqref{eqn:varint}. The proof is complete.
\end{proof}

Together, \Cref{p:var} and \Cref{fig:Eb-decomp,fig:Eb-norm-decomp} suggest that the effects of the applied bias field $\E_B$ discussed in \Cref{lm:Eeff} are driven primarily by this transition from boundary-dominated to electrostatically-dominated energetic contributions with increasing bias. Indeed the dominating electrostatic effects are reflected in an alignment of the equilibrium polar director with the field both in bulk profile (\Cref{fig:Eb-Ws,fig:Ws-Eb}) and average value (\Cref{fig:avval}). The associated stability properties of these equilibria, particularly at high bias, remain an interesting open direction for future work.

\textbf{ACKNOWLEDGMENTS} We thank Tiziana Giorgi  for introducing us to bent-core liquid crystals and for helpful discussions on the results of \cite{GGJ23}.  
\textbf{Funding Declaration:} A.D.W. is partially supported by NSF grant DMS-2154047. X.Y.  is partially supported by Simons travel grant 947054 and NSF grant DMS-2306393.

\clearpage
\bibliographystyle{elsarticle-num} 
\bibliography{references.bib}

@article{Baietal07,
  title = {Structure and stability of bent core liquid crystal fibers},
  author = {Bailey, C. and Gartland, E. C. and J\'akli, A.},
  journal = {Phys. Rev. E},
  volume = {75},
  issue = {3},
  pages = {031701},
  numpages = {9},
  year = {2007},
  month = {Mar},
  publisher = {American Physical Society},
  doi = {10.1103/PhysRevE.75.031701},
  url = {https://link.aps.org/doi/10.1103/PhysRevE.75.031701}
}

@article {BP12,
    AUTHOR = {Bauman, Patricia and Phillips, Daniel},
     TITLE = {Analysis and stability of bent-core liquid crystal fibers},
   JOURNAL = {Discrete Contin. Dyn. Syst. Ser. B},
  FJOURNAL = {Discrete and Continuous Dynamical Systems. Series B. A Journal
              Bridging Mathematics and Sciences},
    VOLUME = {17},
      YEAR = {2012},
    NUMBER = {6},
     PAGES = {1707--1728},
      ISSN = {1531-3492,1553-524X},
   MRCLASS = {76A15 (35R35 82D30)},
  MRNUMBER = {2924435},
MRREVIEWER = {Giovanni\ De Matteis},
       DOI = {10.3934/dcdsb.2012.17.1707},
       URL = {https://doi.org/10.3934/dcdsb.2012.17.1707},
}

@article{BCP92,
author = {Brand, Helmut R. and Cladis, P. E. and Pleiner, Harald},
title = {Symmetry and defects in the CM phase of polymeric liquid crystals},
journal = {Macromolecules},
volume = {25},
number = {26},
pages = {7223-7226},
year = {1992},
doi = {10.1021/ma00052a025},

URL = { 
    
        https://doi.org/10.1021/ma00052a025
    
    

},
eprint = { 
    
        https://doi.org/10.1021/ma00052a025
    
    

}

}

@article {BCP98,
    AUTHOR = {Brand, H. and  Cladis, P. and Pleiner, H.},
     TITLE = {Macroscopic properties of smectic liquid crystals},
   JOURNAL = {Eur. Phys. J. B},
    VOLUME = {6},
      YEAR = {1998},
     PAGES = {347-353},
       DOI = {10.1007/s100510050560},
       URL = {https://doi.org/10.1007/s100510050560},
}

@ARTICLE{CPB99,
       author = {{E. Cladis} and  H. Pleiner and  H.~R. Brand},
        title = "{Fluid Biaxial Banana Smectics: Symmetry at Work}",
      journal = {Liquid Crystals Today},
         year = 1999,
        month = dec,
       volume = {9},
       number = {3-4},
        pages = {1-6},
          doi = {10.1080/13583149950076487},
       adsurl = {https://ui.adsabs.harvard.edu/abs/1999LCTod...9....1E}
}

@article {GGJ20,
    AUTHOR = {Garc\'ia-Cervera, Carlos J. and Giorgi, Tiziana and Joo,
              Sookyung},
     TITLE = {Boundary vortex formation in polarization-modulated orthogonal
              smectic liquid crystals},
   JOURNAL = {SIAM J. Appl. Math.},
  FJOURNAL = {SIAM Journal on Applied Mathematics},
    VOLUME = {80},
      YEAR = {2020},
    NUMBER = {5},
     PAGES = {2024--2044},
      ISSN = {0036-1399,1095-712X},
   MRCLASS = {76A15 (35B40 35J50 35J57 35Q35 35Q56 82D30)},
  MRNUMBER = {4146771},
MRREVIEWER = {Piotr\ Biler},
       DOI = {10.1137/19M1301618},
       URL = {https://doi.org/10.1137/19M1301618},
}

@article{GGJ23,
    AUTHOR = {Garc\'ia-Cervera, Carlos J. and Giorgi, Tiziana and Joo,
              Sookyung},
     TITLE = {Dimensional reduction for the ferroelectric smectic {A}-type
              phase of bent-core liquid crystals},
   JOURNAL = {J. Nonlinear Sci.},
  FJOURNAL = {Journal of Nonlinear Science},
    VOLUME = {33},
      YEAR = {2023},
    NUMBER = {1},
     PAGES = {Paper No. 19, 34},
      ISSN = {0938-8974,1432-1467},
   MRCLASS = {35J08 (76A15)},
  MRNUMBER = {4519638},
       DOI = {10.1007/s00332-022-09874-x},
       URL = {https://doi.org/10.1007/s00332-022-09874-x},
}

@article{GY15,
title = {Analysis of a model for bent-core liquid crystals columnar phases},
journal = {Discrete and Continuous Dynamical Systems - B},
volume = {20},
number = {7},
pages = {2001-2026},
year = {2015},
issn = {1531-3492},
doi = {10.3934/dcdsb.2015.20.2001},
url = {https://www.aimsciences.org/article/id/d2a236d1-cc68-40fa-af5b-ff8bb7dc15a2},
author = {Tiziana Giorgi and Feras Yousef}
}

@article{GCV14,
  title = {Effect of a bias electric field on the structure and dielectric response of the ferroelectric smectic-$A$ liquid crystal in thin planar cells},
  author = {Gornik, K. and \ifmmode \check{C}\else \v{C}\fi{}epi\ifmmode \check{c}\else \v{c}\fi{}, M. and Vaupoti\ifmmode \check{c}\else \v{c}\fi{}, N.},
  journal = {Phys. Rev. E},
  volume = {89},
  issue = {1},
  pages = {012501},
  numpages = {9},
  year = {2014},
  month = {Jan},
  publisher = {American Physical Society},
  doi = {10.1103/PhysRevE.89.012501},
  url = {https://link.aps.org/doi/10.1103/PhysRevE.89.012501}
}

@article{Guoetal11,
  title = {Ferroelectric behavior of orthogonal smectic phase made of bent-core molecules},
  author = {Guo, Lingfeng and Gorecka, Ewa and Pociecha, Damian and Vaupoti\ifmmode \check{c}\else \v{c}\fi{}, Nata\ifmmode \check{s}\else \v{s}\fi{}a and \ifmmode \check{C}\else \v{C}\fi{}epi\ifmmode \check{c}\else \v{c}\fi{}, Mojca and Reddy, R. Amaranatha and Gornik, Kristina and Araoka, Fumito and Clark, Noel A. and Walba, David M. and Ishikawa, Ken and Takezoe, Hideo},
  journal = {Phys. Rev. E},
  volume = {84},
  issue = {3},
  pages = {031706},
  numpages = {8},
  year = {2011},
  month = {Sep},
  publisher = {American Physical Society},
  doi = {10.1103/PhysRevE.84.031706},
  url = {https://link.aps.org/doi/10.1103/PhysRevE.84.031706}
}

@article{JLOS18,
  title = {Physics of liquid crystals of bent-shaped molecules},
  author = {J\'akli, Antal and Lavrentovich, Oleg D. and Selinger, Jonathan V.},
  journal = {Rev. Mod. Phys.},
  volume = {90},
  issue = {4},
  pages = {045004},
  numpages = {68},
  year = {2018},
  month = {Nov},
  publisher = {American Physical Society},
  doi = {10.1103/RevModPhys.90.045004},
  url = {https://link.aps.org/doi/10.1103/RevModPhys.90.045004}
}

@article{
Lin97,
author = {Darren R. Link  and Giorgio Natale  and Renfan Shao  and Joseph E. Maclennan  and Noel A. Clark  and Eva Körblova  and David M. Walba },
title = {Spontaneous Formation of Macroscopic Chiral Domains in a Fluid Smectic Phase of Achiral Molecules},
journal = {Science},
volume = {278},
number = {5345},
pages = {1924-1927},
year = {1997},
doi = {10.1126/science.278.5345.1924},
URL = {https://www.science.org/doi/abs/10.1126/science.278.5345.1924},
eprint = {https://www.science.org/doi/pdf/10.1126/science.278.5345.1924}}

@article{Meyetal75,
  TITLE = {{Ferroelectric liquid crystals}},
  AUTHOR = {Meyer, R.B. and Liebert, L. and Strzelecki, L. and Keller, P.},
  URL = {https://hal.science/jpa-00231156},
  JOURNAL = {{Journal de Physique Lettres}},
  PUBLISHER = {{Edp sciences}},
  VOLUME = {36},
  NUMBER = {3},
  PAGES = {69-71},
  YEAR = {1975},
  DOI = {10.1051/jphyslet:0197500360306900},
  HAL_ID = {jpa-00231156},
  HAL_VERSION = {v1},
}

@Article{Nioetal96,
author ="Niori, T. and Sekine, T. and Watanabe, J. and Furukawa, T. and Takezoe, H.",
title  ="Distinct ferroelectric smectic liquid crystals consisting of banana shaped achiral molecules",
journal  ="J. Mater. Chem.",
year  ="1996",
volume  ="6",
issue  ="7",
pages  ="1231-1233",
publisher  ="The Royal Society of Chemistry",
doi  ="10.1039/JM9960601231",
url  ="http://dx.doi.org/10.1039/JM9960601231"}

@article{OP14,
  title = {Molecular theory of proper ferroelectricity in bent-core liquid crystals},
  author = {Osipov, M. A. and G Pajak},
  journal = {The European physical journal. E, Soft matter},
  volume = {37},
  issue = {9},
  numpages = {7},
  year = {2014},
  doi = {10.1140/epje/i2014-14079-0}
}

@ARTICLE{Pel99,
       author = {{Pelzl}, Gerhard and {Diele}, Siegmar and {Weissflog}, Wolfgang},
        title = "{Banana-Shaped Compounds{\textemdash}A New Field of Liquid Crystals}",
      journal = {Advanced Materials},
         year = 1999,
        month = jun,
       volume = {11},
       number = {9},
        pages = {707-724},
          doi = {10.1002/(SICI)1521-4095(199906)11:9<707::AID-ADMA707>3.0.CO;2-D},
       adsurl = {https://ui.adsabs.harvard.edu/abs/1999AdM....11..707P}
}

@article{Vor29,
author = {Vorländer, D.},
title = {Die Richtung der Kohlenstoff-Valenzen in Benzol-Abkömmlingen},
journal = {Berichte der deutschen chemischen Gesellschaft (A and B Series)},
volume = {62},
number = {10},
pages = {2831-2835},
doi = {https://doi.org/10.1002/cber.19290621026},
url = {https://chemistry-europe.onlinelibrary.wiley.com/doi/abs/10.1002/cber.19290621026},
eprint = {https://chemistry-europe.onlinelibrary.wiley.com/doi/pdf/10.1002/cber.19290621026},
year = {1929}
}

@article{Sek97,
doi = {10.1143/JJAP.36.L1201},
url = {https://dx.doi.org/10.1143/JJAP.36.L1201},
year = {1997},
month = {sep},
publisher = {},
volume = {36},
number = {9A},
pages = {L1201},
author = {Sekine, Tomoko and Takanishi, Yoichi and Niori, Teruki and Takezoe, Junji Watanabe},
title = {Ferroelectric Properties in Banana-Shaped Achiral Liquid Crystalline  Molecular Systems},
journal = {Japanese Journal of Applied Physics}
}

@article{Red11,
  title = {Spontaneous ferroelectric order in a bent-core smectic liquid crystal of fluid orthorhombic layers},
  author = {R Amaranatha Reddy, Chenhui Zhu,  Noel A Clark},
  journal = {Science},
  volume = {332},
  issue = {6025},
page={72-77},
  numpages = {5},
  year = {2011},
  doi = {10.1126/science.1197248}
}

@article{TT06,
doi = {10.1143/JJAP.45.597},
url = {https://dx.doi.org/10.1143/JJAP.45.597},
year = {2006},
month = {feb},
publisher = {},
volume = {45},
number = {2R},
pages = {597},
author = {Takezoe, Hideo and Takanishi, Yoichi},
title = {Bent-Core Liquid Crystals: Their Mysterious and Attractive World},
journal = {Japanese Journal of Applied Physics}
}

@article{VA32,
author = {Vorländer, D. and Apel, Alfons},
title = {Die Richtung der Kohlenstoff-Valenzen in Benzolabkömmlingen (II.)},
journal = {Berichte der deutschen chemischen Gesellschaft (A and B Series)},
volume = {65},
number = {7},
pages = {1101-1109},
doi = {https://doi.org/10.1002/cber.19320650710},
url = {https://chemistry-europe.onlinelibrary.wiley.com/doi/abs/10.1002/cber.19320650710},
eprint = {https://chemistry-europe.onlinelibrary.wiley.com/doi/pdf/10.1002/cber.19320650710},
year = {1932}
}

@Article{RT06,
author ="Reddy, R. Amaranatha and Tschierske, Carsten",
title  ="Bent-core liquid crystals: polar order{,} superstructural chirality and spontaneous desymmetrisation in soft matter systems",
journal  ="J. Mater. Chem.",
year  ="2006",
volume  ="16",
issue  ="10",
pages  ="907-961",
publisher  ="The Royal Society of Chemistry",
doi  ="10.1039/B504400F",
url  ="http://dx.doi.org/10.1039/B504400F"}

@article{
Wal00,
author = {David M. Walba  and Eva Körblova  and Renfan Shao  and Joseph E. Maclennan  and Darren R. Link  and Matthew A. Glaser  and Noel A. Clark },
title = {A Ferroelectric Liquid Crystal Conglomerate Composed of Racemic Molecules},
journal = {Science},
volume = {288},
number = {5474},
pages = {2181-2184},
year = {2000},
doi = {10.1126/science.288.5474.2181},
URL = {https://www.science.org/doi/abs/10.1126/science.288.5474.2181},
eprint = {https://www.science.org/doi/pdf/10.1126/science.288.5474.2181}}

@incollection{Wro11,
author = {Stanisław Wr\'obel and Janusz Chru\'sciel and Marta Wierzejska-Adamowicz and Monika Marzec and Danuta M. Ossowska-Chru\'sciel and Christian Legrand and Redouane Douali},
title = {Ferroelectric Liquid Crystals Composed of Banana-Shaped Thioesters},
booktitle = {Ferroelectrics},
publisher = {IntechOpen},
address = {Rijeka},
year = {2011},
editor = {Micka\"el Lallart},
chapter = {18},
doi = {10.5772/22155},
url = {https://doi.org/10.5772/22155}
}

@article{MasM1991,
author = {Tatsuya Masuda, Yoshio Matsunaga},
title = {Mesomorphic Properties of 4-[3,4-Bis(4-alkoxybenzoyloxy)benzylideneamino]azobenzenes and Related Compounds},
journal = {Bulletin of the Chemical Society of Japan},
volume = {64},
number = {7},
pages = {2192-2195},
year = {1991},
publisher = {Taylor \& Francis},
doi = {10.1246/bcsj.64.2192},


URL = { 
    
        https://doi.org/10.1246/bcsj.64.2192
    
    

},
eprint = { 
    
        https://doi.org/10.1080/02678299308027306
    
    

}

}

@article{MM1993,
author = {Hiroyuki Matsuzaki and Yoshio Matsunaga},
title = {New mesogenic compounds with unconventional molecular structures 1,2-Phenylene and 2,3-naphthylene bis[4-(4-alkoxyphenyliminomethyl)benzoates] and related compounds},
journal = {Liquid Crystals},
volume = {14},
number = {1},
pages = {105--120},
year = {1993},
publisher = {Taylor \& Francis},
doi = {10.1080/02678299308027306},


URL = { 
    
        https://doi.org/10.1080/02678299308027306
    
    

},
eprint = { 
    
        https://doi.org/10.1080/02678299308027306
    
    

}

}

@article{KMM1991,
author = {M. Kuboshita and Y. Matsunaga and H. Matsuzaki},
title = {Mesomorphic Behavior of 1,2-Phenylene Bis[4-(4-alkoxybenzylideneamino)benzoates},
journal = {Molecular Crystals and Liquid Crystals},
volume = {199},
number = {1},
pages = {319--326},
year = {1991},
publisher = {Taylor \& Francis},
doi = {10.1080/00268949108030943},


URL = { 
    
        https://doi.org/10.1080/00268949108030943
    
    

},
eprint = { 
    
        https://doi.org/10.1080/00268949108030943
    
    

}

}

@phdthesis{Les16, place={Maribor}, title={Ferroelectric smectic-A phase made of bent-core liquid crystals: Structure and Dielectric response in thin cells}, url={https://dk.um.si/IzpisGradiva.php?lang=eng&id=57326}, abstractNote={In the thesis we study the structure and response of bent-core liquid crystals in the orthogonal ferroelectric Smectic-A (SmAP$_F$) phase in thin planar cells. We construct a phenomenological continuum model to study the structure in thin planar cells. A set of molecules within a small volume is presented by the director ($vec{n}$), which defines the average direction of the long molecular axes within this volume, and by the polar director ($vec{p}$), which points in the direction of local polarization. We choose a geometry in which the director in the smectic layer is constant and the polar director varies across the cell. The polar director structure inside the cell is determined by a competition among the torques due to the bulk elasticity, electrostatic effects and surface anchoring. The equilibrium profile structure of the polar director is obtained by minimization of the free energy. We find the polar director profile in a cell as a function of the type and strength of the surface anchoring, bulk elastic constants and cell thickness.    The effect of the external electric bias field on the structure in the cell is studied, as well. Bent-core molecules have a permanent electric dipole moment. In the external electric field the electric torque tends to rotate the dipoles in the direction of the external field. The polar director profile in the cell in external bias field thus depends on the competition among three effects: anchoring at the surfaces, the elastic properties of the bulk and the influence of the external electric field. By the rotation of molecules in the external electric field, optical properties of the cell are changed, therefore the SmAP$_F$ phase is a promising phase for use in displays with high response time, high contrast, continuous gray level and wide viewing angle.    The response of the SmAP$_F$ phase to an alternating external electric field (the dielectric response) is also considered. The dielectric response of the SmAP$_F$ phase consists of two modes: the phase and amplitude mode. The phase mode is due to fluctuations in the orientation of the local direction of the spontaneous polarization and the amplitude mode is due to the change in the magnitude of spontaneous polarization. The frequency of the phase and amplitude mode and the dielectric permittivity are calculated numerically as a function of the bias external DC electric field, cell thickness, the type and strength of surface anchoring and the ratio between the bend and splay elastic constants. Analytical solution for a very specific case of chosen parameters is also obtained. Theoretically obtained dependencies are in agreement with the reported experimental measurements.    In the thesis we study the effect of different types of surface anchoring and for this purpose three different types of cells are defined. The polar director profile and the dielectric response is calculated in all three types of cells. The type I cell has polar surface anchoring of equal strengths at both surfaces. The type II cell has, in addition to the polar surface anchoring of equal strengths at both surfaces, a nonpolar anchoring at the bottom surface. The type III cell has polar surface anchoring at both surfaces but not of equal strengths. We predict that by comparison of the dielectric response of the SmAP$_F$ phase in all three types of cells the type of the polarization splay in bent-core liquid crystals can be determined.    The thesis is divided into three parts. In the first part the basic physical properties of ferroelectric liquid crystals, focusing on the bent-core liquid crystals, are discussed. In the second part phenomenological theoretical model is developed. In the third part of the thesis a dielectric response in external bias field is studied.}, school={K. Leskovar}, author={Leskovar, Kristina}, year={2016}}

@article{Zhu12,
  title = {Topological ferroelectric bistability in a polarization-modulated orthogonal smectic liquid crystal.},
  author = {Zhu},
  journal = {Journal of the American Chemical Society },
volume = {134},
number = {23},
pages = {9681-9687},
  year = {2012},
  month = {Jul},
  doi = {10.1021/ja3009314}
}



\end{document}